\newtheorem{theorem}{Theorem}[section]
\newtheorem{proposition}[theorem]{Proposition}
\newtheorem{lemma}[theorem]{Lemma}
\newtheorem{corollary}[theorem]{Corollary}
\newtheorem*{theorem*}{Main Theorem}
\theoremstyle{definition}
\newtheorem{example}[theorem]{Example}
\newtheorem{remark}[theorem]{Remark}
\numberwithin{equation}{section}
\renewcommand{\L}{{\mathscr L}}
\newcommand{\C}{{\mathbb C}}
\newcommand{\N}{{\mathscr N}}
\renewcommand{\O}{{\mathscr O}}
\newcommand{\F}{{\mathscr F}}
\newcommand{\Z}{{\mathbb Z}}
\newcommand{\Aut}{{\operatorname{Aut}}}
\newcommand{\Hom}{{\operatorname{Hom}}}
\newcommand{\Ker}{{\operatorname{Ker}\,}}
\newcommand{\inv}{{^{-1}}}
\newcommand{\git}{/\!\!/}
\newcommand{\pr}{{\operatorname{pr}}}
\renewcommand{\phi}{\varphi}
\newcommand{\GL}{\operatorname{GL}}
\newcommand{\ind}{\operatorname{ind}}
\newcommand{\res}{\operatorname{res}}
\newcommand{\vb}{\mathrm{vb}}
\newcommand{\gf}{{\operatorname{fin}}}
\newcommand{\id}{\operatorname{id}}
\renewcommand{\int}{\operatorname{int}}
\begin{document}

\title{Gromov's Oka principle for equivariant maps}

\author{Frank Kutzschebauch, Finnur L\'arusson, Gerald W.~Schwarz}

\address{Frank Kutzschebauch, Institute of Mathematics, University of Bern, Sidlerstrasse 5, CH-3012 Bern, Switzerland}
\email{frank.kutzschebauch@math.unibe.ch}

\address{Finnur L\'arusson, School of Mathematical Sciences, University of Adelaide, Adelaide SA 5005, Australia}
\email{finnur.larusson@adelaide.edu.au}

\address{Gerald W.~Schwarz, Department of Mathematics, Brandeis University, Waltham MA 02454-9110, USA}
\email{schwarz@brandeis.edu}

\thanks{F.~Kutzschebauch was supported by Schweizerischer Nationalfonds grant 200021-178730.  F.~L\'arusson was supported by Australian Research Council grant DP150103442.}

\subjclass[2010]{Primary 32M05.  Secondary 14L24, 14L30, 32E10, 32E30, 32M10, 32Q28}

\date{16 Dec 2019.  Minor edits 9 Aug 2020.  Erratum to Lemma 5.4 added 29 Sep 2023}

\keywords{Stein manifold, elliptic manifold, Oka manifold, complex Lie group, reductive group, equivariant map, Runge approximation, Cartan extension}

\begin{abstract} 
We take the first step in the development of an equivariant version of modern, Gromov-style Oka theory.  We define equivariant versions of the standard Oka property, ellipticity, and homotopy Runge property of complex manifolds, show that they satisfy all the expected basic properties, and present examples.  Our main theorem is an equivariant Oka principle saying that if a finite group $G$ acts on a Stein manifold $X$ and another manifold $Y$ in such a way that $Y$ is $G$-Oka, then every $G$-equivariant continuous map $X\to Y$ can be deformed, through such maps, to a $G$-equivariant holomorphic map.  Approximation on a $G$-invariant holomorphically convex compact subset of $X$ and jet interpolation along a $G$-invariant subvariety of $X$ can be built into the theorem.  We conjecture that the theorem holds for actions of arbitrary reductive complex Lie groups and prove partial results to this effect.
\end{abstract}

\maketitle

\section{Introduction} 
\label{sec:intro}

\noindent
Oka theory is concerned with approximation and interpolation problems for holomorphic maps from Stein manifolds into target manifolds for which the obstructions to solving such problems are purely topological.  The prototypical examples of such target manifolds are homogeneous spaces of complex Lie groups.  Oka theory for homogeneous spaces was initiated by Grauert in the late 1950s and developed by him and others into the 1970s.  Modern Oka theory was launched by Gromov in a seminal paper of 1989 \cite{Gromov1989}.  He discovered a linearisation method that has made it possible to extend the theory well beyond the class of homogeneous spaces.  The most natural larger class of target manifolds emerged later: it is the class of Oka manifolds.  Since 2000, the foundations of Oka theory and a wide range of applications have been so actively developed that Oka theory can now be considered a subfield of complex analysis and complex geometry in its own right.  The definitive reference on Oka theory is the monograph \cite{Forstneric2017}; see also the survey \cite{FL2011}.

It is always of interest and often useful to adapt a mathematical theory to the presence of symmetries.  Classical Oka theory and geometric invariant theory were first brought together in the work of Heinzner and Kutzschebauch \cite{HK1995}.  The present authors have continued this development, most recently with an equivariant parametric Oka principle for bundles of homogeneous spaces \cite{KLS2018}.  The present paper is the first step towards an equivariant version of modern, Gromov-style Oka theory.

The most basic result of Oka theory says that if $X$ is a Stein manifold and $Y$ is an Oka manifold, then every continuous map $f:X\to Y$ can be deformed to a holomorphic map.  In \cite{KLS2018}, we proved that if $f$ is equivariant with respect to holomorphic actions of a reductive complex Lie group $G$ on $X$ and $Y$, then $f$ can be deformed through equivariant maps to an equivariant holomorphic map -- provided that the $G$-action on $Y$ factors through a transitive action of some other complex Lie group (not necessarily reductive) on $Y$, so $Y$ is in particular homogeneous.  The main result of \cite{KLS2018} says much more, but the homogeneity assumption is essential.  Our goal here, in the spirit of Gromov, is to remove it.

In Section \ref{s:equivar-Oka}, we define a complex manifold $Y$ on which a reductive complex Lie group $G$ acts by biholomorphisms to be Oka with respect to the action, or simply $G$-Oka, if, for every reductive closed subgroup $H$ of $G$, the submanifold $Y^H$ of points fixed by $H$ is Oka in the usual sense.  The basic examples of $G$-Oka manifolds are $G$-modules\footnote{A $G$-module is a finite-dimensional vector space with an action of $G$ by linear maps.} and $G$-homogeneous spaces.  There are also other examples.   The various results that we prove about the equivariant Oka property, from easy propositions to the main theorems of the paper, suggest that the definition is the right one.

Gromov's notion of ellipticity is the primary geometric sufficient condition for a manifold to be Oka.  In Section \ref{s:equivar-ellipticity}, we present a natural equivariant version of ellipticity and show that it implies the equivariant Oka property.  We prove basic results about equivariant ellipticity, in particular its behaviour upon passing to a covering space, and present some examples.

In Sections \ref{s:finite-action} and \ref{s:reductive-action}, we prove the main results of the paper.  They may be summarised as follows.

\begin{theorem*}
Let $G$ be a reductive complex Lie group and let $K$ be a maximal compact subgroup of $G$.  Let $X$ be a Stein $G$-manifold and $Y$ a $G$-Oka manifold.  Suppose that one of the following conditions holds.
\begin{enumerate}
\item  $G$ is finite (so $G=K$).
\item  All the stabilisers of the $G$-action on $X$ are finite.
\item  $X$ has a single slice type.
\end{enumerate}
Then every $K$-equivariant continuous map $f:X\to Y$ is homotopic, through such maps, to a $G$-equivariant holomorphic map.  

Suppose that {\rm (1)} or {\rm (2)} hold.  If $f$ is holomorphic on a neighbourhood of a $G$-invariant subvariety $Z$ of $X$ and on a neighbourhood of a $K$-invariant $\O(X)$-convex compact subset $A$ of $X$, and $\ell\geq 0$ is an integer, then the homotopy can be chosen so that the intermediate maps agree with $f$ to order $\ell$ along $Z$ and are uniformly close to $f$ on $A$.
\end{theorem*}

Of course condition (1) implies condition (2), but {\rm (1)} is important enough to be stated separately.  We prove the theorem assuming (1) in Section \ref{s:finite-action} and assuming (2) or (3) in Section \ref{s:reductive-action}.  The property ascribed to the manifold $Y$ in the theorem is called the equivariant basic Oka property with approximation and jet interpolation (abbreviated $G$-BOPAJI).  Thus the theorem says that if $G$ is finite and $Y$ is $G$-Oka, then $Y$ satisfies $G$-BOPAJI.  A key ingredient in the proof of the theorem is Forstneri\v c's Oka principle for sections of branched holomorphic maps (\cite[Theorem 2.1]{Forstneric2003}; see also \cite[Theorem 6.14.6]{Forstneric2017}), the only available Oka principle in modern Oka theory that does not require the map in question to be a submersion.

We conjecture, and hope to prove in future work without any further assumptions, that when a reductive group $G$ acts on a manifold $Y$ in such a way that $Y$ is $G$-Oka, then $Y$ satisfies $G$-BOPAJI (and even the stronger, parametric version of $G$-BOPAJI).  

One of the remarkable features of the standard Oka property is that it has many nontrivially equivalent formulations.  We show that for a finite group $G$, the $G$-Oka property, $G$-BOPAJI, and the weaker properties $G$-BOPA (approximation without interpolation) and $G$-BOPI (plain interpolation without approximation) are equivalent.  We also generalise the theorem that an Oka Stein manifold is elliptic to an equivariant setting.

In Section \ref{s:equivar-Runge}, we introduce the equivariant version of the so-called homotopy Runge property from standard Oka theory and show that it has all the expected basic properties, in particular it implies the $G$-Oka property.  It is easily seen that a $G$-module is $G$-Runge, and $G$ itself is $G$-Runge, but (somewhat surprisingly) we have not been able to find a simple proof that a $G$-homogeneous space is $G$-Runge.  In Section \ref{s:ellipticity-implies-Runge}, we prove that if $G$ is a reductive group and $Y$ is $G$-elliptic, then $Y$ is $G$-Runge.  It follows that a $G$-homogeneous space is $G$-Runge.  The proof is an adaptation of Gromov's linearisation method to an equivariant setting.  We conjecture that the hypothesis of $G$-ellipticity can be replaced by the $G$-Oka property.  For now we have proved that for a finite group $G$ and a Stein $G$-manifold, the $G$-Oka property, the $G$-Runge property, and $G$-ellipticity are equivalent.

\section{Equivariant Oka property}  \label{s:equivar-Oka}

\noindent
Let a compact real Lie group $K$ act on a complex manifold $Y$.  We always mean a continuous and hence real analytic action by biholomorphisms.  We say that $Y$ is $K$-\emph{Oka}, or more precisely, Oka with respect to the given $K$-action, if the fixed-point manifold $Y^L$ is Oka for every closed subgroup $L$ of $K$.  (By Bochner's linearisation theorem (\cite{Bochner1945}; see also \cite{Akhiezer1995}), the subvariety $Y^L$ is indeed smooth.  If $L$ is not contained in the stabiliser of any point in $Y$, then $Y^L$ is empty and by default Oka.  Of course $Y^L$ need not be connected.)

If a reductive complex Lie group $G$ acts on $Y$ (holomorphically by biholomorphisms), then we say that $Y$ is $G$-\emph{Oka} if it is $K$-Oka for some, or equivalently all, maximal compact subgroups $K$ of $G$.  Equivalently, $Y^H$ is Oka for all reductive closed subgroups $H$ of $G$.

Here are the basic properties of the equivariant Oka property.

\begin{proposition}  \label{p:Oka-properties}
Let a compact real Lie group $K$ act on a complex manifold $Y$.
\begin{enumerate}
\item  If $K$ acts trivially on $Y$, then $Y$ is $K$-Oka if and only if $Y$ is Oka.
\item  If $Y$ is $K$-Oka and $L$ is a closed subgroup of $K$, then $Y$ is $L$-Oka with respect to the restriction of the action to $L$.
\item  If $Y$ is $K$-Oka, then $Y$ is Oka.
\item  If $Y_j$ is $K_j$-Oka, $j=1,2$, then $Y_1\times Y_2$ is $K_1\times K_2$-Oka.  
\item  If $Y_1$ and $Y_2$ are $K$-Oka, then $Y_1\times Y_2$ is $K$-Oka with respect to the diagonal action.
\item  A holomorphic $K$-retract of a $K$-Oka manifold is $K$-Oka.
\item  If $Y$ is the increasing union of $K$-Oka $K$-invariant domains, then $Y$ is $K$-Oka.
\end{enumerate}
\end{proposition}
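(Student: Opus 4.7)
The plan is to reduce each of the seven assertions to the corresponding standard fact about the non-equivariant Oka property, via a direct identification of the fixed-point submanifolds involved. The common template is: for a closed subgroup of the relevant compact group, express the fixed-point locus on the new manifold in terms of fixed-point loci of the manifolds from the hypotheses, then invoke the appropriate closure property of Oka manifolds (retracts, finite products, increasing unions of domains). The standard Oka properties I would cite are compiled in Forstneri\v c's monograph \cite{Forstneric2017}.

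The individual reductions are as follows. For (1), if $K$ acts trivially then $Y^L=Y$ for every closed $L\leq K$, so being $K$-Oka is the same condition as being Oka. Part (2) is immediate from the observation that a closed subgroup of $L$ is also a closed subgroup of $K$, and (3) follows from (2) applied with $L=\{e\}$ (alternatively, take $L=\{e\}$ directly in the definition). For (4), given a closed subgroup $L\leq K_1\times K_2$, one checks that
\[(Y_1\times Y_2)^L=Y_1^{\pi_1(L)}\times Y_2^{\pi_2(L)},\]
where $\pi_i$ is projection to $K_i$; since $L$ is compact, $\pi_i(L)$ is compact and hence closed in $K_i$, so both factors are Oka and their product is Oka. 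Part (5) follows at once from (4) and (2) applied to the diagonal embedding $K\hookrightarrow K\times K$, or directly from the identity $(Y_1\times Y_2)^L=Y_1^L\times Y_2^L$ for the diagonal action. For (6), a $K$-equivariant holomorphic retraction $r\colon Y\to Z$ restricts to a holomorphic retraction $Y^L\to Z^L$ for every closed $L\leq K$, so $Z^L$ is a holomorphic retract of the Oka manifold $Y^L$, hence Oka. Finally, in (7), if $Y=\bigcup_j Y_j$ is the increasing union of $K$-Oka $K$-invariant open subsets, then $Y^L=\bigcup_j Y_j^L$ is an increasing union of Oka open subsets, which is Oka.

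There is no substantial obstacle in any of the seven items; everything is routine once the definitions are unwound. The only point that deserves a line of verification is the claim in (4) that $\pi_i(L)$ is closed in $K_i$, which uses compactness of $K_1\times K_2$. One could of course deduce (5) from (4) and (2) and so not argue it separately, but presenting a direct one-line identification of the fixed-point set is arguably cleaner. I would organise the write-up as a single proof treating the seven items in sequence, each in one or two sentences, with (4) given slightly more attention.
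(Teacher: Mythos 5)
Your proposal is correct and follows essentially the same route as the paper: reduce each item to a standard closure property of Oka manifolds (products, retracts, increasing unions) by identifying the relevant fixed-point sets. Your identification $(Y_1\times Y_2)^L=Y_1^{\pi_1(L)}\times Y_2^{\pi_2(L)}$ in (4), with the remark that $\pi_i(L)$ is closed by compactness, is just a slightly more explicit version of the paper's observation that stabilisers for $K_1\times K_2$ are products $L_1\times L_2$ with $(Y_1\times Y_2)^{L_1\times L_2}=Y_1^{L_1}\times Y_2^{L_2}$.
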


\begin{proof}
These properties follow easily straight from the definition or from basic properties of Oka manifolds.  For (4), note that stabilisers of the action of $K_1\times K_2$ on $Y_1\times Y_2$ are of the form $L_1\times L_2$, where $L_j$ is a stabiliser of the action of $K_j$ on $Y_j$, $j=1,2$, and $(Y_1\times Y_2)^{L_1\times L_2}=Y_1^{L_1}\times Y_2^{L_2}$.  For (6), note that if $Z\hookrightarrow Y\to Z$ is a $K$-retract and $L$ is a closed subgroup of $K$, then $Z^L$ is a retract of $Y^L$.
\end{proof}

Here are the basic examples of equivariantly Oka manifolds.  (The proposition may also be deduced from Propositions \ref{p:elliptic-implies-Oka} and \ref{p:elliptic-examples}.)

\begin{proposition}  \label{p:Oka-examples}
Let $G$ be a reductive complex Lie group.
\begin{enumerate}
\item  A $G$-module is $G$-Oka.
\item  A $G$-homogeneous space is $G$-Oka.
\end{enumerate}
\end{proposition}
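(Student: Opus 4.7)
\medskip

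\noindent\emph{Proof plan.} In both cases, by the definition of the $G$-Oka property, the task reduces to showing that for every reductive closed subgroup $H\le G$, the fixed-point set $Y^H$ is Oka in the usual sense.

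For (1), let $Y=V$ be a $G$-module. Since $H$ is reductive, $V$ decomposes into a direct sum of $H$-isotypic components. The fixed-point set $V^H$ is precisely the isotypic component of the trivial $H$-representation, hence is a complex linear subspace of $V$. Such a subspace is biholomorphic to $\C^k$ for some $k\ge 0$, and $\C^k$ is Oka (it is a complex Lie group, or more elementarily, a Stein manifold that is also the total space of a trivial vector bundle over a point). This handles (1).

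For (2), write $Y=G/J$ for a closed complex subgroup $J$ of $G$. A coset $gJ$ lies in $Y^H$ exactly when $g\inv H g\subset J$. The centraliser $Z_G(H)$ is a closed complex Lie subgroup of $G$ with Lie algebra $\mathfrak{g}^H$, and it acts on $Y^H$ by left translation. The strategy is to show that the connected components of $Y^H$ are precisely the orbits of the identity component $Z_G(H)^0$, whence each component is a homogeneous space of a complex Lie group and hence Oka by the classical theorem of Grauert (see \cite[Ch.~5]{Forstneric2017}); since the Oka property for a manifold is equivalent to the Oka property for each of its connected components, it follows that $Y^H$ is Oka.

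To identify the components of $Y^H$ with orbits, I would compute tangent spaces. At a point $gJ\in Y^H$, the tangent space $T_{gJ}Y$ identifies with $\mathfrak{g}/\Ad(g)\mathfrak{j}$; by reductivity of $H$ (Bochner's linearisation), the tangent space to the smooth submanifold $Y^H$ at $gJ$ is the $H$-fixed subspace $\mathfrak{g}^H/(\mathfrak{g}^H\cap\Ad(g)\mathfrak{j})$. On the other hand, the orbit $Z_G(H)\cdot gJ$ is isomorphic to $Z_G(H)/(Z_G(H)\cap gJg\inv)$ and has tangent space at $gJ$ of exactly the same form. Hence the orbit is open in $Y^H$. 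Since a connected smooth manifold cannot be partitioned into more than one nonempty open subset, each connected component of $Y^H$ is a single $Z_G(H)^0$-orbit, as required.

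The main obstacle is the tangent-space identification that forces each orbit of $Z_G(H)^0$ on $Y^H$ to be open; everything else is either a direct consequence of reductivity or an appeal to the classical Oka theorem for complex-Lie-group homogeneous spaces. The alternative route mentioned by the authors, via Propositions \ref{p:elliptic-implies-Oka} and \ref{p:elliptic-examples}, would instead exhibit explicit equivariant dominating sprays and deduce Okaness from equivariant ellipticity.
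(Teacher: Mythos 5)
Your proof is correct and follows essentially the same route as the paper: part (1) is the same one-line observation, and part (2) is exactly Luna's argument used in the paper, computing $(T_{gJ}Y)^H\simeq\mathfrak g^H/(\mathfrak g^H\cap\Ad(g)\mathfrak j)$ via reductivity to see that the orbits of a group with Lie algebra $\mathfrak g^H$ are open in $Y^H$, so each component is homogeneous and hence Oka. The only (immaterial) difference is that you use the centraliser $Z_G(H)$ where the paper uses the normaliser $N_G(H)$, and you invoke componentwise Okaness instead of the paper's finiteness-of-orbits remark.
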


Note that a Riemann surface that is Oka ($\C$, $\C^*$, an elliptic curve, or the Riemann sphere) is Oka with respect to any action, because the fixed-point set of any subgroup is either the whole surface or discrete and hence Oka.

\begin{proof}
(1)  The fixed-point manifold of a $G$-module is a linear subspace.

(2)  Consider a homogeneous space $G/H$ and let $L$ be a reductive closed subgroup of $G$.  The following argument is due to Luna \cite[proof of Corollary 3.1]{Luna1975}.  We have an action by left multiplication of $N_G(L)$ on $(G/H)^L$, which is a submanifold of $G/H$.  The tangent space of $(G/H)^L$ at any point $x$ can be identified with $(\mathfrak g/\mathfrak h)^L$.  Since $L$ is reductive, $(\mathfrak g/\mathfrak h)^L$ is isomorphic to $\mathfrak g^L/\mathfrak h^L$.  But the Lie algebra of $N_G(L)$ is $\mathfrak g^L$.  Hence the $N_G(L)$-orbit of $x$ is open in $(G/H)^L$, so it is open in the component (connected or irreducible: they are the same) of $(G/H)^L$ through $x$, so it contains this component.  Since $(G/H)^L$ has finitely many components, the number of $N_G(L)$-orbits in $(G/H)^L$ is finite.  Thus $(G/H)^L$ is a disjoint union of homogeneous spaces and is therefore Oka. 
\end{proof}

The following result shows that equivariantly Oka manifolds can be constructed by induction.

\begin{proposition}  \label{p:Oka-induction}
Let $G$ be a reductive complex Lie group, $H$ a reductive closed subgroup of $G$, and $Y$ an $H$-manifold.  Then   $G\times^H Y$ with its natural $G$-action is $G$-Oka if and only if $Y$ is $H$-Oka.
\end{proposition}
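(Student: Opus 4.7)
The plan is to prove both implications by decomposing, for each reductive closed subgroup $L\subset G$, the fixed-point set $(G\times^H Y)^L$ into a finite disjoint union of holomorphic fiber bundles whose bases are homogeneous spaces of complex Lie groups and whose fibers are the fixed-point manifolds $Y^{L'}$ of reductive subgroups $L'\subset H$. The projection $[g,y]\mapsto gH$ sends $(G\times^H Y)^L$ onto $(G/H)^L$, and applying Luna's argument as in the proof of Proposition \ref{p:Oka-examples}(2), the components of $(G/H)^L$ are the finitely many $N_G(L)$-orbits, indexed by the $H$-conjugacy classes of reductive closed subgroups $L'$ of $H$ that are conjugate to $L$ in $G$. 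For each such class, fix $g_0\in G$ with $g_0\inv L g_0=L'$; then the map $(n,y)\mapsto[g_0 n,y]$ from $N_G(L')\times Y^{L'}$ into $(G\times^H Y)^L$ descends to a biholomorphism from $N_G(L')\times^{N_H(L')}Y^{L'}$ onto the preimage in $(G\times^H Y)^L$ of the corresponding component of $(G/H)^L$, fibering holomorphically over $N_G(L')/N_H(L')$ with fiber $Y^{L'}$.

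For the forward direction, assume $Y$ is $H$-Oka. Each base $N_G(L')/N_H(L')$ is a homogeneous space of a complex Lie group and hence Oka by the non-equivariant case of Proposition \ref{p:Oka-examples}(2), while each fiber $Y^{L'}$ is Oka by hypothesis. Applying the ascent property for holomorphic fiber bundles with Oka fiber and Oka base shows that every piece of $(G\times^H Y)^L$, and hence $(G\times^H Y)^L$ itself, is Oka. Thus $G\times^H Y$ is $G$-Oka.

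For the converse, suppose $G\times^H Y$ is $G$-Oka and let $L\subset H$ be reductive. Then $L$ is also reductive in $G$, so $(G\times^H Y)^L$ is Oka, and in particular the piece $E:=N_G(L)\times^{N_H(L)}Y^L$ corresponding to $L$ itself (with the choice $g_0=e$) is Oka, being a union of connected components of $(G\times^H Y)^L$. Pulling back the holomorphic principal $N_H(L)$-bundle $N_G(L)\to N_G(L)/N_H(L)$ along the projection $E\to N_G(L)/N_H(L)$ produces a holomorphic principal $N_H(L)$-bundle over $E$ whose total space is naturally identified with $N_G(L)\times Y^L$ via $(n,y)\mapsto(n,[n,y])$. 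Its fiber $N_H(L)$, a complex Lie group, is Oka, so the ascent property gives that $N_G(L)\times Y^L$ is Oka. Finally $Y^L$ is Oka as a holomorphic retract of $N_G(L)\times Y^L$ via the second projection, with section $y\mapsto(e,y)$.

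The main technical work is the component decomposition set up in the first paragraph, which requires a bookkeeping of conjugacy data analogous to Luna's argument; once it is in place, both implications follow directly from the ascent property for holomorphic fiber bundles, together with the Oka-ness of homogeneous spaces of complex Lie groups for the forward direction, and the retract property of Oka manifolds for the converse.
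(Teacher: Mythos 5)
Your proof is correct and follows essentially the same route as the paper: both decompose $(G\times^H Y)^L$ into the finitely many open-and-closed pieces lying over the $N_G(L)$-orbits of $(G/H)^L$ (Luna's argument) and then apply the up-and-down theorem for fibre bundles with Oka fibre together with the Oka property of complex Lie groups and their homogeneous spaces. The only difference is packaging: you present each piece as the associated bundle $N_G(L')\times^{N_H(L')}Y^{L'}$ over $N_G(L')/N_H(L')$, while the paper views the same piece as the base of the principal bundle with total space $N_G(L)\times Y^{g^{-1}Lg}$ -- an equivalent description.
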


Recall that the twisted product $\ind_H^G Y=G\times^H Y$ is defined as the geometric quotient of $G\times Y$ by the $H$-action $h\cdot(g,y)=(gh^{-1}, hy)$.  The quotient is smooth because $H$ acts freely on $G$ and hence on $G\times Y$.   We denote the image of $(g,y)$ in $G\times^HY$ by $[g,y]$. The $G$-action on $G\times^H Y$ is induced by the $G$-action $g'\cdot(g,y) = (g'g,y)$ on $G\times Y$.  The induction functor from $H$-spaces to $G$-spaces is left adjoint to the restriction functor, which also preserves the equivariant Oka property, as noted in Proposition  \ref{p:Oka-properties}.  The manifold $G\times^H Y$ is the total space of a $G$-fibre bundle over $G/H$ with fibre $Y$, associated to the principal bundle $G\to G/H$.  Looking at the bundle $G\times Y\to G\times^H Y$ with fibre $H$ or the bundle $G\times^H Y\to G/H$ with fibre $Y$, we see that if $Y$ is Oka, then so is $G\times^H Y$, and conversely.   (For the theorem that if $E\to B$ is a holomorphic fibre bundle with Oka fibre, then $E$ is Oka if and only if $B$ is Oka, see \cite[Theorem 5.6.5]{Forstneric2017}.  For more details on twisted products in the topological setting, see \cite[Chapter II]{Bredon1972}.)

\begin{proof}
Write $X=G\times^H Y$ and let $L$ be a reductive closed subgroup of $G$. Then $X^L$ is not empty if and only if $L$ is conjugate to a subgroup of $H$, so we may assume that $L\subset H$.   Note that the preimage of $X^L$ in $G\times Y$ is
\[ \{ (g,y)\in G\times Y: g^{-1}Lg\subset H, y\in Y^{g^{-1}Lg} \}. \]
The image of $X^L$ by the projection $\pi:X\to G/H$ lies in $(G/H)^L$, which, by the proof of Proposition \ref{p:Oka-examples}, is a finite union of $N_G(L)$-orbits.  Over each such orbit, say the orbit $O$ of $gH$, the map
\[ N_G(L)\times Y^{g^{-1}Lg} \to X^L\cap \pi^{-1}(O), \quad (n,y)\mapsto [ng,y], \]
is the quotient map of the free action of the group $M=N_G(L)\cap gHg^{-1}$ on the source given by $m\cdot(n,y)=(nm^{-1},g^{-1}mgy)$.  Since the fibre is Oka, the target is Oka if and only if $Y^{g\inv L g}$ is Oka, where $g\inv Lg\subset H$ \cite[Theorem 5.6.5]{Forstneric2017}. Since $X^L$ is a finite disjoint union of such quotients, we see that $X$ is $G$-Oka   if and only if $Y$ is $H$-Oka.
\end{proof}

One of the most important ways to obtain new Oka manifolds from old uses fibre bundles with Oka fibres.  We present an equivariant version.  Let $Y\overset\pi\to Z$ be a holomorphic fibre bundle with fibre $F$.  Assume that $G$ is a reductive complex Lie group and $Y$, $Z$, and $F$ are $G$-manifolds such that $\pi$ is $G$-equivariant.  Further assume that $Z$ is Stein.

\begin{lemma}   \label{l:equivar-locally-trivial}
The Stein $G$-manifold $Z$ has an open cover by $G$-saturated\footnote{A set is \textit{saturated} if it is the union of fibres of the categorical quotient map $Z\to Z\git G$.} open sets $U$ such that $\pi\inv(U)$ is $G$-biholomorphic over $U$ to $U\times F$ with the diagonal $G$-action.
\end{lemma}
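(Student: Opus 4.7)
The plan is to apply Luna's holomorphic slice theorem twice: first to $Z$, to obtain $G$-saturated neighbourhoods of the form $G \times^H S$; and then, within the bundle, to $H$-equivariantly trivialize the restriction to $S$.

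For the first step, recall that in a Stein $G$-manifold with reductive $G$, every $G$-orbit has a closed $G$-orbit in its closure, and around each closed orbit $G \cdot z$ (with reductive stabiliser $H = G_z$) there is, by the holomorphic slice theorem of Snow and Heinzner, a $G$-saturated open neighbourhood $U$ and a $G$-equivariant biholomorphism $U \cong G \times^H S$, where $S$ is an $H$-invariant Stein open neighbourhood of $z$, $H$-biholomorphic to an $H$-invariant open subset of the slice representation $N_z = T_z Z / T_z(G \cdot z)$, with $z$ corresponding to the origin. Such neighbourhoods cover $Z$.

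For the second step, the $G$-equivariant bundle $\pi\inv(U) \to U$ is canonically $G$-biholomorphic to the induced bundle $G \times^H E$, where $E = \pi\inv(S) \to S$ is the restricted $H$-equivariant bundle. Moreover, the map $[g,(s,f)] \mapsto ([g,s], g \cdot f)$ identifies $G \times^H (S \times F)$ (with diagonal $H$-action on $S \times F$ and $G$ acting on the first factor) with $(G \times^H S) \times F = U \times F$ (with diagonal $G$-action). Hence an $H$-equivariant trivialization $E \cong S \times F$ produces the sought-after $G$-equivariant trivialization $\pi\inv(U) \cong U \times F$.

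It remains to produce, after possibly shrinking $S$ to a smaller $H$-invariant neighbourhood of the origin in $N_z$, an $H$-equivariant biholomorphism $E \cong S \times F$ of $H$-bundles over $S$. Since the bundle has transition functions in $\Aut_G(F)$, the fibre $E_z$ is canonically $G$-isomorphic (hence $H$-isomorphic) to $F$; fix such an identification. Now a local, non-equivariant trivialization of $E$ near $z$, extended via the $H$-equivariant radial retraction of the slice representation and corrected by averaging over a maximal compact subgroup of $H$, produces the desired $H$-equivariant trivialization. The main obstacle is precisely this last step; it amounts to the $H$-triviality of $H$-equivariant holomorphic $F$-bundles over small $H$-saturated neighbourhoods of a fixed point in an $H$-module, and ultimately rests on the reductivity of $H$ together with Cartan's Theorem B applied in the equivariant holomorphic category, in the spirit of Heinzner's work on equivariant holomorphic convexity.
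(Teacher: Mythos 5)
Your overall route is the same as the paper's: the holomorphic slice theorem gives $U\cong G\times^H S$ around a closed orbit, one then wants an $H$-equivariant trivialisation of $\pi\inv(S)\to S$ over a sufficiently small slice, and the untwisting $[g,(s,f)]\mapsto([g,s],gf)$, which you set up correctly, converts this into the desired $G$-trivialisation of $\pi\inv(U)$ with the diagonal action. The difference lies in the middle step, and that is where your argument has a genuine gap. The paper does not prove this step from scratch; it quotes Heinzner--Kutzschebauch \cite[Corollary 3, p.~332]{HK1995}, a consequence of their equivariant version of Grauert's Oka principle, which says precisely that for $S$ small enough $\pi\inv(U)$ is $G$-biholomorphic to $G\times^H(S\times F)$ with $H$ acting diagonally.

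Your proposed substitute --- take a non-equivariant trivialisation near $z$, spread it out via the radial retraction of the slice, and ``correct by averaging over a maximal compact subgroup of $H$'' --- fails at the correction step. The fibre $F$ is an arbitrary complex manifold, so maps into $F$, and trivialisations $\pi\inv(S)\to S\times F$, do not carry any linear or convex structure in which an average over the compact group is defined; averaging, and equivariant Theorem B (which concerns coherent, hence linear, data), suffice for vector bundles and splitting problems, but not for trivialising a bundle with a general fibre equivariantly. Producing an equivariant holomorphic trivialisation from non-equivariant or topological data is exactly the content of the equivariant Grauert Oka principle, so your sketch in effect assumes the theorem it is meant to prove. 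A smaller but related point: you assert that the transition functions of $Y\to Z$ lie in the group of $G$-equivariant automorphisms of $F$ and that the fibre over $z$ is canonically $H$-isomorphic to $F$; neither is among the hypotheses (only equivariance of $\pi$ and a $G$-action on $F$ are assumed), and this compatibility is again part of what the cited result delivers. To complete the proof you must either invoke \cite{HK1995} as the paper does or reprove an equivariant Oka principle for the relevant structure group.
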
 

\begin{proof}
Let $Gz$ be a closed orbit in $Z$.  By the holomorphic slice theorem, a $G$-saturated neighbourhood $U$ of $Gz$ is $G$-biholomorphic to $G\times^H S$, where $H=G_z$ and $S$ is a locally closed $H$-stable\footnote{For subsets of a set with a group action, we use the terms \textit{invariant} and \textit{stable} interchangeably.} submanifold of $Z$ containing $z$.  By \cite[Corollary 3, p.~332]{HK1995}, if $S$ is sufficiently small, then $\pi\inv(U)$ is $G$-biholomorphic to $G\times ^H(S\times F)$, where $H$ acts diagonally on $S\times F$.  Since $F$ is a $G$-manifold, $G\times ^H(S\times F)$ is $G$-biholomorphic to $(G\times^HS)\times F=U\times F$, proving the lemma.
\end{proof}

\begin{corollary}   \label{c:up-and-down-a-fibre-bundle}
Suppose that $F$ is $G$-Oka.  Then $Y$ is $G$-Oka if and only if $Z$ is $G$-Oka.
\end{corollary}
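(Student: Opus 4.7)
The plan is to fix a reductive closed subgroup $L$ of $G$ and show that the restriction of $\pi$ to $Y^L$ is a holomorphic fibre bundle $Y^L\to Z^L$ with fibre $F^L$.  Since $F$ is $G$-Oka, the fibre $F^L$ is Oka, and the standard non-equivariant bundle theorem \cite[Theorem 5.6.5]{Forstneric2017} then yields that $Y^L$ is Oka if and only if $Z^L$ is Oka.  As this holds for every reductive closed subgroup $L$ of $G$, the equivalence of the $G$-Oka properties for $Y$ and $Z$ follows at once.

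For the local triviality of $Y^L\to Z^L$, I would invoke Lemma \ref{l:equivar-locally-trivial} to cover $Z$ by $G$-saturated open sets $U$ admitting $G$-equivariant biholomorphisms $\pi^{-1}(U)\cong U\times F$ over $U$, with the diagonal $G$-action on the target.  Since these biholomorphisms are in particular $L$-equivariant, taking $L$-fixed points on both sides gives biholomorphisms
\[ \pi^{-1}(U)\cap Y^L \;\cong\; U^L\times F^L \]
over $U^L$.  As $U$ ranges over the cover, the sets $U^L$ form an open cover of $Z^L$, and these product trivialisations exhibit $Y^L\to Z^L$ as a holomorphic fibre bundle with fibre $F^L$, as desired.

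I do not anticipate a serious obstacle here: all of the genuinely equivariant content has already been packaged into Lemma \ref{l:equivar-locally-trivial}, where the Steinness of $Z$ and the holomorphic slice theorem do the work.  The remainder is a routine verification that the fixed-point functor commutes with the product trivialisations, combined with the classical up-and-down theorem for holomorphic fibre bundles with Oka fibre.
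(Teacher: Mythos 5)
Your proposal is correct and takes essentially the same route as the paper: for each reductive closed subgroup $L$ of $G$, the paper likewise deduces from Lemma \ref{l:equivar-locally-trivial} that $Y^L\to Z^L$ is a holomorphic fibre bundle with fibre $F^L$ and then applies \cite[Theorem 5.6.5]{Forstneric2017}. The only difference is that you write out the verification that the $G$-equivariant product trivialisations restrict, upon taking $L$-fixed points, to trivialisations of $Y^L$ over $Z^L$, a step the paper leaves implicit.
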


\begin{proof}
Let $L$ be a reductive closed subgroup of $G$.  By the lemma, $Y^L\to Z^L$ is a fibre bundle with fibre $F^L$.  By hypothesis, $F^L$ is Oka, so $Y^L$ is Oka if and only if $Z^L$ is Oka \cite[Theorem 5.6.5]{Forstneric2017}.
\end{proof}

\begin{example}   \label{e:Hirzebruch}
Let $Y_n$ be the $n^\textrm{th}$ Hirzebruch surface, $n\geq 1$ (we leave $Y_0=\mathbb P_1 \times \mathbb P_1$ out of consideration).  It is the total space of a holomorphic $\mathbb P_1$-bundle over $\mathbb P_1$ and is therefore Oka.  The reductive group $G=\GL(2,\C)$ acts on $Y_n$ in such a way that the projection $\pi:Y_n\to\mathbb P_1$ is equivariant with respect to the usual action on $\mathbb P_1$.  In fact, the quotient of $G$ by the subgroup $M$ of scalar matrices $\begin{bmatrix} \mu & 0 \\ 0 & \mu \end{bmatrix}$ with $\mu^n=1$ is a maximal reductive subgroup of the automorphism group of $Y_n$, which is isomorphic to $\C^{n+1}\rtimes G/M$ .  The automorphism group has two orbits, so $Y_n$ is not homogeneous.  For a detailed exposition of the background facts cited here, see \cite[Section 6]{Blanc2012}.  

We will show that $Y_n$ is $G$-Oka (or equivalently $G/M$-Oka).  Although $\mathbb P_1$ is $G$-Oka by Proposition \ref{p:Oka-examples}(b), Corollary \ref{c:up-and-down-a-fibre-bundle} does not help.  Indeed, the Stein hypothesis of Lemma \ref{l:equivar-locally-trivial} obviously fails, and so does the conclusion of the lemma: the only nonempty $G$-invariant open subset of $\mathbb P_1$ is $\mathbb P_1$ itself, and $Y_n$ is not biholomorphic to $\mathbb P_1\times \mathbb P_1$.

Following \cite[Section 6]{Blanc2012}, we describe $Y_n$ as
\[ Y_n = \big\{([x,y,z],[u,v]) \in \mathbb P_2\times\mathbb P_1: yv^n=zu^n \big\}, \]
with $\pi$ being the projection onto the second factor, and let $\begin{bmatrix} a & b \\ c & d \end{bmatrix} \in G$ act by sending $([x,y,z],[u,v])$ to
\[ ( [xu^n, y(au+bv)^n, y(cu+dv)^n], [au+bv, cu+dv]) \quad \textrm{if }u\neq 0, \]
and 
\[ ( [xv^n, z(au+bv)^n, z(cu+dv)^n], [au+bv, cu+dv]) \quad \textrm{if }v\neq 0. \]

Now let $H$ be a reductive closed subgroup of $G$ not consisting entirely of scalar matrices $\begin{bmatrix} \mu & 0 \\ 0 & \mu \end{bmatrix}$ with $\mu^n=1$.  We will show that the submanifold $Y_n^H$ of $Y_n$ is Oka.  It lies over $\mathbb P_1^H$ (but we cannot apply Lemma \ref{l:equivar-locally-trivial} to assert that it is a bundle over $\mathbb P_1^H$).  Over each point of $\mathbb P_1^H$, $Y_n^H$ consists of the whole $\pi$-fibre or a finite subset of it.  Hence, if $\mathbb P_1^H$ is finite, $Y_n^H$ is Oka.  Otherwise, $\mathbb P_1^H=\mathbb P_1$, so $H$ consists of scalar matrices.  Then $H$ has precisely two fixed points in each $\pi$-fibre, so $Y_n^H$ is biholomorphic to the disjoint union of two copies of $\mathbb P_1$ and is therefore Oka.
\end{example}

\begin{example}   \label{e:Oka-but-not-G-Oka}
This example shows that a $G$-manifold $Y$ can be Oka without being $G$-Oka, even for the simplest nontrivial group $G=\mathbb Z_2$.

Let $f\in\O(\C^n)$, $n\geq 2$, be a polynomial function such that $df$ vanishes nowhere on $f^{-1}(0)$.  By \cite{KK2008}, the affine algebraic manifold $X=\{(u,v,z)\in\C^{n+2}:uv=f(z)\}$ has the algebraic density property and is therefore elliptic and hence Oka.  The fixed point set $W\subset X$ of the involution $u \leftrightarrow v$ of $X$ is smooth and given by the formula $u^2=f(z)$, and $W$ is a double branched covering of $\C^n$ with branch locus $f^{-1}(0)$.  We can choose $f$ so that $f^{-1}(0)$ is not Oka.  For example, if we take $f(z_1,\ldots,z_n)=z_1(z_1-1)z_2-1$, then $f^{-1}(0)$ is isomorphic to $\C\setminus\{0,1\}\times \C^{n-2}$.

We do not know whether $W$ is Oka in general.  If it is, then our promised example is $Y=W$ and $Y^G=f^{-1}(0)$.  If it is not, then the example is $Y=X$ and $Y^G=W$.
\end{example}

\section{Equivariant ellipticity}  \label{s:equivar-ellipticity}

\noindent
A \emph{spray} on a manifold $Y$ is a holomorphic map $s:E\to Y$ defined on the total space of a holomorphic vector bundle $E$ over $Y$ such that $s(0_y)=y$ for all $y\in Y$.  The spray is said to be \emph{dominating at} $y\in Y$ if $s\vert E_y\to Y$ is a submersion at $0_y$.  The spray is said to be \emph{dominating} if it is dominating at every point of $Y$.   Finally, $Y$ is said to be \emph{elliptic} if it admits a dominating spray.

Now suppose that a complex Lie group $G$ acts on $Y$.  We say that $s$ is a $G$-\emph{spray} if the $G$-action on $Y$ lifts to an action of $G$ on $E$ by vector bundle isomorphisms such that both $s$ and the projection $E\to Y$ are equivariant.  We say that $Y$ is $G$-\emph{elliptic} if it admits a dominating $G$-spray.  (Similarly we can define the notion of $G$-subellipticity, but we will not consider it in this paper.)

Here are the basic properties of $G$-ellipticity.  Note the similarity to Proposition \ref{p:Oka-properties}.

\begin{proposition}  \label{p:elliptic-properties}
Let a complex Lie group $G$ act on a complex manifold $Y$.
\begin{enumerate}
\item  If $G$ acts trivially on $Y$, then $Y$ is $G$-elliptic if and only if $Y$ is elliptic.
\item  If $Y$ is $G$-elliptic, then $Y$ is $H$-elliptic for every subgroup $H$ of $G$.
\item  If $Y$ is $G$-elliptic, then $Y$ is elliptic and hence Oka.
\item  If $Y_k$ is $G_k$-elliptic, $k=1,2$, then $Y_1\times Y_2$ is $G_1\times G_2$-elliptic.  
\item  If $Y_1$ and $Y_2$ are $G$-elliptic, then $Y_1\times Y_2$ is $G$-elliptic with respect to the diagonal action.
\item  A holomorphic $G$-retract of a $G$-elliptic manifold is $G$-elliptic.
\end{enumerate}
\end{proposition}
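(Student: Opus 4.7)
For (1)--(3), the plan is to work directly from the definitions. In (1), given an ordinary dominating spray $s:E\to Y$, equip $E$ with the trivial $G$-action so that $s$ becomes a $G$-spray; the converse is immediate. For (2), simply restrict the given $G$-action on $E$ to the subgroup $H$, which yields an $H$-spray of the same underlying bundle. For (3), forget equivariance entirely, noting that a $G$-spray is in particular a spray, and then invoke Gromov's theorem that elliptic manifolds are Oka.

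For (4), the plan is to take dominating $G_k$-sprays $s_k:E_k\to Y_k$ and form the exterior direct sum $E=\pi_1^*E_1\oplus\pi_2^*E_2$ over $Y_1\times Y_2$, which carries a natural $(G_1\times G_2)$-action by vector bundle isomorphisms covering the product action on the base. Then $s(e_1,e_2)=(s_1(e_1),s_2(e_2))$ defines a dominating $(G_1\times G_2)$-equivariant spray, with domination following fibrewise from domination of the $s_k$. For (5), apply (4) with $G_1=G_2=G$ and then restrict to the diagonal subgroup $G\hookrightarrow G\times G$ via (2).

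The most substantive item is (6). Assuming $\iota:Z\hookrightarrow Y$ and $r:Y\to Z$ are $G$-equivariant holomorphic maps with $r\circ\iota=\id_Z$, and $s:E\to Y$ is a dominating $G$-spray, the plan is to pull $E$ back along $\iota$ to obtain a $G$-bundle $\iota^*E\to Z$, and define $s_Z:\iota^*E\to Z$ by $s_Z(e)=r(s(e))$, viewing $e\in(\iota^*E)_z$ as an element of $E_{\iota(z)}$. Equivariance of $s_Z$ then follows from equivariance of $r$ and $s$, and $s_Z(0_z)=r(\iota(z))=z$ by the retraction identity. For dominance at $z$, the restriction of $ds_Z$ to the fibre at $0_z$ factors as $dr_{\iota(z)}\circ ds\vert_{E_{\iota(z)}}$; the second map is surjective onto $T_{\iota(z)}Y$ because $s$ dominates at $\iota(z)$, and the first is surjective onto $T_zZ$ because $r$ is a retraction. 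The only real work throughout the proposition lies in carefully checking these equivariance and domination compatibilities; there is no conceptual obstacle, the proofs being straightforward equivariant adaptations of the corresponding constructions in the non-equivariant theory.
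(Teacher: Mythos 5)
Your proposal is correct and follows essentially the same route as the paper: items (1)--(5) are handled by the standard direct constructions (which the paper declares evident), and for (6) you restrict the bundle to the retract and postcompose the spray with the $G$-retraction, exactly as in the paper, with your factorisation $dr_{\iota(z)}\circ ds\vert_{E_{\iota(z)}}$ correctly verifying domination.
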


\begin{proof}
(1)--(5) are evident.

(6)  Let $Z$ be a $G$-retract of a $G$-elliptic manifold $Y$ with a dominating $G$-spray $s$ defined on a bundle $E$ over $Y$.  To see that $Z$ is $G$-elliptic, postcompose the restriction of $s$ to $E\vert Z$ by a $G$-retraction of $Y$ onto $Z$.
\end{proof}

Ellipticity is the primary geometric sufficient condition for a manifold to be Oka.  The next result shows that this still holds in the presence of a reductive group action.

\begin{proposition}   \label{p:elliptic-implies-Oka}
Let $G$ be a reductive complex Lie group.  If $Y$ is a $G$-elliptic manifold, then $Y$ is $G$-Oka.
\end{proposition}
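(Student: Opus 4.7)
The plan is to show that for every reductive closed subgroup $H\subset G$, the fixed-point manifold $Y^H$ is elliptic in the ordinary sense, by restricting the given $G$-equivariant dominating spray to a suitable subbundle over $Y^H$. This immediately gives the $G$-Oka property, because elliptic manifolds are Oka.

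Let $s:E\to Y$ be a dominating $G$-spray and fix a reductive closed subgroup $H$ of $G$. The first step is to produce the spray bundle on $Y^H$. Restrict $E$ to $Y^H$; this is an $H$-equivariant holomorphic vector bundle whose fibre $E_y$ over $y\in Y^H$ carries a linear $H$-representation. Because $H$ is reductive, each $E_y$ splits $H$-equivariantly as $(E_y)^H\oplus M_y$, where $M_y$ is the sum of the non-trivial isotypic components. A standard argument (using a Reynolds projection, which varies holomorphically along $Y^H$ because $H$ is reductive) shows that these decompositions fit together into a holomorphic splitting $E|_{Y^H}=E^H\oplus M$ of $H$-vector bundles, so $E^H\to Y^H$ is a holomorphic vector bundle.

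Next, define $s^H:E^H\to Y^H$ as the restriction of $s$. This is well defined: for $v\in (E_y)^H$ and $h\in H$, $h\cdot s(v)=s(h\cdot v)=s(v)$, so $s(v)\in Y^H$; and $s^H(0_y)=s(0_y)=y$, so $s^H$ is a spray on $Y^H$. To verify that $s^H$ is dominating, fix $y\in Y^H$ and consider the differential of $s|_{E_y}$ at $0_y$, which is an $H$-equivariant linear surjection
\[ ds_y:E_y\longrightarrow T_yY. \]
Using Bochner's linearisation theorem, $T_yY^H=(T_yY)^H$. Applying the Reynolds operator to the short exact sequence determined by $ds_y$, and using once more that $H$ is reductive (so taking $H$-invariants is exact), the induced map $(E_y)^H\to (T_yY)^H=T_yY^H$ is surjective. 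Thus $s^H$ is dominating at every point of $Y^H$, which proves that $Y^H$ is elliptic and hence Oka by the classical implication.

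The only real content is the two applications of reductivity of $H$: first to cut out $E^H$ as a holomorphic subbundle, and second to preserve surjectivity of $ds_y$ upon taking $H$-invariants. Both are standard consequences of the exactness of the invariant-functor for linearly reductive groups, so I expect no serious obstacle; the one point that deserves a word of care is the holomorphic (as opposed to merely continuous) dependence of the isotypic projector on the base point, which follows from the fact that the Reynolds projector is a polynomial in the group action and therefore varies holomorphically with the fibre.
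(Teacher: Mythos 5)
Your proposal is correct and follows essentially the same route as the paper: restrict $E$ to $Y^H$, observe that the fibrewise $H$-fixed subspaces form a holomorphic subbundle (the paper invokes rigidity of representations of reductive groups to get local constancy of the fixed-point dimension, which amounts to the same fact as your holomorphically varying Reynolds projector, obtained by averaging over a maximal compact subgroup of $H$ rather than by a ``polynomial in the group action''), and note that the restricted spray is a dominating spray on $Y^H$, so $Y^H$ is elliptic and hence Oka. Your explicit verification of domination via exactness of the $H$-invariants functor fills in a step the paper leaves implicit.
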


\begin{proof}
Let $E\to Y$ be a holomorphic $G$-vector bundle with a dominating $G$-spray $s$ and let $L$ be a reductive subgroup of $G$.  Consider the $L$-vector bundle $E_0:=E\vert Y^L$ over $Y^L$.  By the rigidity of representations of reductive groups, over each connected component of $Y^L$, the $L$-actions on the fibres of $E$ are mutually isomorphic; in particular, the subspaces of fixed points have the same dimension.  Hence, $E_0^L$ is a holomorphic vector bundle over $Y^L$ with a dominating spray map induced by $s$, so $Y^L$ is elliptic and therefore Oka.
\end{proof}

Here are the basic examples of $G$-elliptic manifolds.

\begin{proposition}  \label{p:elliptic-examples}
\begin{enumerate}
\item  A $G$-module is $G$-elliptic.
\item  A $G$-homogeneous space is $G$-elliptic.
\end{enumerate}
\end{proposition}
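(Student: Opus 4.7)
The plan is to construct explicit dominating $G$-sprays on trivial vector bundles in both cases.

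For part (1), let $V$ be a $G$-module. I will take $E=V\times V$ with $G$ acting diagonally, which is clearly a $G$-vector bundle over $V$, and the spray $s(v,w)=v+w$. Equivariance is immediate from the linearity of the $G$-action, and each fibre $\{v\}\times V$ maps isomorphically onto $V$, so $s$ is dominating.

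For part (2), set $Y=G/H$ and let $\mathfrak g$ denote the Lie algebra of $G$. The construction I have in mind is the trivial bundle $E=Y\times\mathfrak g$ with $G$ acting by left translation on $Y$ and by the adjoint representation on $\mathfrak g$, together with the spray
\[ s(yH,\xi) = \exp(\xi)\cdot yH. \]
The identity $\exp(\mathrm{Ad}(g)\xi) = g\exp(\xi)g^{-1}$ will give $G$-equivariance of $s$, and domination at $(yH,0)$ reduces to surjectivity of the differential at the identity of the orbit map $g\mapsto g\cdot yH$, $G\to G/H$, which holds because the action is transitive.

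There is no real obstacle here: the point of using the trivial bundle with adjoint action on the fibre, rather than the tangent bundle $T(G/H)=G\times^H(\mathfrak g/\mathfrak h)$, is that it automatically supplies a $G$-linearisation and makes the equivariance of the exponential-based spray a direct consequence of a standard Lie group identity. The only verification needed beyond these brief computations is that $G$ genuinely acts by vector bundle isomorphisms lifting the base action on $Y$, which for the trivial bundle just means that the $\mathrm{Ad}$-representation is linear on each fibre.
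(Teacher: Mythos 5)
Your proof is correct and is essentially the paper's own argument: part (1) uses the same spray $s(v,w)=v+w$ on the trivial bundle $V\times V$ with the diagonal action, and part (2) uses the same trivial bundle $Y\times\mathfrak g$ with the adjoint action on the fibre and the spray $(y,\xi)\mapsto\exp(\xi)\cdot y$. You merely spell out the equivariance identity $\exp(\operatorname{Ad}(g)\xi)=g\exp(\xi)g^{-1}$ and the domination via the transitive orbit map, which the paper leaves implicit.
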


\begin{proof}
(1)  Let $V$ be a $G$-module and $s:V\times V\to V$, $(z,v)\mapsto z+v$, be the obvious dominating spray defined on the trivial bundle over $V$ with fibre $V$.  The $G$-action on $V$ lifts to the diagonal action on $V\times V$ which makes both $s$ and the projection equivariant.

(2)  Let $Y$ be $G$-homogeneous.  Let $Y\times\mathfrak g\to Y$ be the trivial $G$-vector bundle where $G$ acts on $\mathfrak g$ by the adjoint representation. Let $s$ be the dominating spray $Y\times\mathfrak g\to Y$, $(y,v)\mapsto \exp(v)\cdot y$.   Then both $s$ and the projection are equivariant with respect to the $G$-action on $Y\times\mathfrak g$.
\end{proof}

There is a more elementary dominating equivariant spray on a complex Lie group $G$ acting on itself by left multiplication, namely $G\times\mathfrak g\to G$, $(g,v)\mapsto g\exp(v)$, where $G$ acts trivially on $\mathfrak g$.

The following result shows that $G$-elliptic manifolds can be constructed by induction.

\begin{proposition}  \label{p:elliptic-induction}
Let $G$ be a complex Lie group, $H$ a closed complex subgroup of $G$, and $Y$ an $H$-elliptic manifold.  Then the manifold $G\times^H Y$ with its natural $G$-action is $G$-elliptic.
\end{proposition}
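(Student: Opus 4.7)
The plan is to construct a dominating $G$-spray on $X:=G\times^HY$ as the combination of two natural pieces: an \emph{induced spray} coming from the hypothesised $H$-spray on $Y$, which supplies the tangent directions along the ``fibre'' of $X\to G/H$, and an \emph{action spray} coming from left multiplication by $G$, which supplies the transverse directions. Neither spray is dominating on its own, but together they should cover all of $T_pX$ at every $p\in X$.

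First, let $s:E\to Y$ be a dominating $H$-spray. I would form $\mathscr E_1:=G\times^HE$, where $H$ acts on $G\times E$ by $h\cdot(g,e)=(gh\inv,h\cdot e)$; this is a holomorphic $G$-vector bundle over $X$ with projection $[g,e]\mapsto[g,\pi_E(e)]$ and natural $G$-action $g'\cdot[g,e]=[g'g,e]$. Because $s$ is $H$-equivariant, the rule $[g,e]\mapsto[g,s(e)]$ descends to a well-defined $G$-equivariant holomorphic map $s_1:\mathscr E_1\to X$ that restricts to the identity on the zero section. Next, inspired by the elementary dominating equivariant spray on a Lie group noted after Proposition \ref{p:elliptic-examples}, I would take $\mathscr E_2:=X\times\mathfrak g$, the trivial bundle with $G$ acting on the fibre by the adjoint representation, and define $s_2:\mathscr E_2\to X$ by $s_2(x,v)=\exp(v)\cdot x$; equivariance follows from $\exp(\mathrm{Ad}(g)v)=g\exp(v)g\inv$, and $s_2$ is the identity on the zero section. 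Finally, on the Whitney sum $\mathscr E:=\mathscr E_1\oplus\mathscr E_2$, I would set
\[ \sigma\bigl([g,e],v\bigr):=[\exp(v)\,g,\,s(e)], \]
which routinely verifies to be a $G$-equivariant spray.

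The main obstacle is to check that $\sigma$ is dominating at every $p=[g,y]\in X$. Its vertical derivative at $0\in\mathscr E_p$ splits as a sum of two maps. The first, from $\mathscr E_1$, is the composition of $(ds)_{0_y}:E_y\to T_yY$ (surjective by dominance of $s$) with the differential of the immersion $Y\hookrightarrow X$, $y'\mapsto[g,y']$; its image is the tangent space at $p$ to the slice $[g,Y]\subset X$, of dimension $\dim Y$. The second, from $\mathscr E_2$, is the infinitesimal $G$-action at $p$; since a short direct computation identifies the $G$-isotropy of $p$ with $gH_yg\inv$, where $H_y$ is the $H$-isotropy of $y$ (compare the orbit analysis in the proof of Proposition \ref{p:Oka-induction}), this image equals $T_p(G\cdot p)$, of dimension $\dim G-\dim H_y$. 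The intersection of the two images is the tangent space at $p$ to $[g,Y]\cap G\cdot p=[g,Hy]$, of dimension $\dim H-\dim H_y$, so inclusion–exclusion gives
\[ \dim Y+(\dim G-\dim H_y)-(\dim H-\dim H_y)=\dim G-\dim H+\dim Y=\dim X. \]
Hence the two images span $T_pX$ and $\sigma$ is dominating, as required.
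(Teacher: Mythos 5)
Your construction coincides with the paper's: the induced bundle $G\times^H E$ carrying the spray $[g,e]\mapsto[g,s(e)]$, plus the trivial bundle $X\times\mathfrak g$ with the adjoint action carrying the spray $([g,y],v)\mapsto[\exp(v)\,g,y]$, combined on the direct sum. The only divergence is in how domination is verified, and there your argument has one soft spot: writing $V_1,V_2\subset T_pX$ for the images of the two vertical differentials at $p=[g,y]$, your inclusion--exclusion count needs the inequality $\dim(V_1\cap V_2)\le\dim H-\dim H_y$, i.e.\ $V_1\cap V_2\subseteq T_p[g,Hy]$; but the inclusion one can read off from the set-theoretic identity $[g,Y]\cap G\cdot p=[g,Hy]$ is the opposite one, since in general the tangent space of an intersection of submanifolds can be strictly smaller than the intersection of their tangent spaces, so this step is asserted rather than proved. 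It is true and easy to repair: if $v\in V_2$ is the value at $p$ of the fundamental vector field of $A\in\mathfrak g$ and $v\in V_1=\Ker d\pi_p$, where $\pi:X\to G/H$ is the bundle projection, then equivariance of $\pi$ forces $A\in\operatorname{Ad}(g)\mathfrak h$, whence $v$ is tangent to $[g,Hy]$. Alternatively --- and this is how the paper argues --- you can avoid the intersection altogether: $V_1$ is the entire vertical space $\Ker d\pi_p$ (your first map surjects onto the tangent space of the fibre, a copy of $Y$), and $d\pi_p(V_2)=T_{gH}(G/H)$ because $G$ acts transitively on $G/H$; any pair of subspaces with these two properties spans $T_pX$, with no dimension count needed.
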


\begin{proof}
Let  $\pi: E\to Y$ be a holomorphic $H$-vector bundle on $Y$ together with an $H$-equivariant holomorphic dominating spray $t: E\to Y$. Let $F=G\times^HE$, which is a $G$-vector bundle over $X=G\times^H Y$ with projection $F\to X$, $[g,v]\mapsto[g,\pi(v)]$.  The induced spray map $\pi^*t : F\to X$, $[g,v]\mapsto [g,t(v)]$, is $G$-equivariant and dominating on the fibres of $X\to G/H$.  Let $X_{\mathfrak g}$ denote  the trivial $G$-vector bundle $X\times\mathfrak g$. Here $G$ acts on $\mathfrak g$ by the adjoint representation.  We have the spray map 
\[ s: X_{\mathfrak g}\to X,\quad ([g,y],A)\mapsto [\exp(A)g,y]. \]
The composition $s: X_{\mathfrak g}\to X\to G/H$ is dominating and 
\[ (\pi^*t,s): F\oplus X_{\mathfrak g}\to X \]
is dominating and $G$-equivariant.
\end{proof}

\begin{example}
The first examples of elliptic manifolds that are not necessarily homogeneous were complements in $\C^n$ of algebraic subvarieties of codimension at least 2 \cite[Paragraph 0.5.B(iii)]{Gromov1989}.  Complements of tame analytic subvarieties of codimension at least 2 in $\C^n$ are also elliptic \cite[Proposition 6.4.1]{Forstneric2017}.  Here is a simple equivariant example.

Let $\Z_2$ act on $\C^2$ by reflection in the origin and let $X=\C^2\setminus(\Z\times\{0\})$.  Consider vector fields on $X$ of the form
\[ v(z,w) = f(az+bw)\bigg(b\frac\partial{\partial z} - a\frac\partial{\partial w}\bigg), \]
with $a,b\in\C$, $a\neq 0$, $f\in\O(\C)$ odd and zero on $a\Z$.  The field $v$ is complete and its flow is given by the formula
\[ \phi_v^t(z,w) = (z,w)+tf(az+bw)(b, -a), \quad t\in\C. \]
Finitely many fields $v_1,\ldots,v_k$ suffice to span the tangent space of $X$ at each point.  Then a dominating $\Z_2$-equivariant spray on $X$, defined on the trivial bundle of rank $k$ with the trivial $\Z_2$-action, is given by the formula
\[ s(z,w,t_1,\ldots,t_k) = (\phi_{v_k}^{t_k}\circ\cdots\circ\phi_{v_1}^{t_1})(z,w), \]
so $X$ is $\Z_2$-elliptic.  It was pointed out already in \cite[Satz 4.13]{Kaup1967} that $X$ is not homogeneous with respect to any Lie group, real or complex, although the group of holomorphic automorphisms of $X$ acts transitively on $X$.
\end{example}

\begin{example}
Example \ref{e:Oka-but-not-G-Oka} shows that an elliptic manifold with a $G$-action need not be $G$-elliptic, even for $G=\mathbb Z_2$.
\end{example}

It is well known and easily seen that any covering space (unbranched; finite or infinite) of an elliptic manifold is elliptic.  For equivariant ellipticity, the rather subtle issue of lifting group actions to covering spaces arises.  In the following, we have relied on Bredon's account in \cite[Section I.9]{Bredon1972}.

Let $Y$ be a $G$-elliptic manifold where $G$ is reductive.  We take $Y$ to be connected, but the group $G$ need not be connected.  We may assume that $G$ acts effectively on $Y$. By definition, there is a $G$-vector bundle $E\to Y$ and a dominating equivariant spray $s: E\to Y$.

First assume that $G$ is semisimple (in particular connected).  Then the universal covering $G'$ of $G$ is a finite covering and $G'$ is again semisimple.  Let $Y'$ be a covering space of $Y$. Then the action of $G$ on $Y$ lifts to an action of $G'$ on $Y'$ (Bredon's lift is purely topological, but since it covers a holomorphic action on $Y$, the lifted action is holomorphic).  Replacing $G'$ by a finite quotient, we can assume that $G'$ acts effectively on $Y'$.  If $\pi: Y'\to Y$ is the covering map, then $E'=\pi^*E\to Y'$ is a holomorphic vector bundle on $Y'$ with a holomorphic action of $G'$, which is the lift of the $G$-action on $E$.  Thus $E'\to Y'$ is a holomorphic $G'$-vector bundle.  The spray map $s:E\to Y$ lifts to the spray map $\pi^*s : E'\to Y'$, which is also dominating since it is dominating below.  We then have the following result.

\begin{proposition}
Let $G$ be semisimple and let $Y$ be connected and $G$-elliptic.  Let $Y'$ be a covering space of $Y$.  Then $Y'$ is $G'$-elliptic for a finite covering $G'$ of $G$.
\end{proposition}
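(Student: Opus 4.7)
Almost all of the construction is already carried out in the paragraph immediately preceding the statement, so the plan is really to finalize and verify. I would take $G'$ to be the universal cover of $G$, which is finite since $G$ is semisimple, quotiented if necessary by a finite central subgroup so that it acts effectively on $Y'$; the lift of the $G$-action on $Y$ to a holomorphic $G'$-action on $Y'$ is provided by Bredon's theorem, with holomorphy coming for free from the holomorphy of the $G$-action on $Y$ and the fact that $\pi: Y' \to Y$ is a holomorphic covering map. The pullback $E' = \pi^* E$ is a holomorphic $G'$-vector bundle over $Y'$ via $g'\cdot(y',v) = (g'\cdot y', g\cdot v)$, where $g$ denotes the image of $g'$ in $G$. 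Existence of the spray lift $\pi^* s: E' \to Y'$ is by the covering lifting criterion: $E'$ deformation retracts onto its zero section $Y' \subset E'$, so the map $s \circ p' : E' \to E \to Y$ induces a map $\pi_1(E') \to \pi_1(Y)$ that factors through $\pi_*(\pi_1(Y'))$, and the lift is then uniquely pinned down by demanding that it equal $\id_{Y'}$ on the zero section (consistent with the behaviour of $s$ there).

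What remains is to check that $\pi^* s$ is a dominating $G'$-equivariant spray. The spray condition (identity on the zero section) is immediate from the construction. Domination at $0_{y'}$ follows from domination of $s$ at $0_{\pi(y')}$ because $d\pi_{y'}$ is an isomorphism and the fibrewise identification $E'_{y'} \cong E_{\pi(y')}$ is tautological; this is what the text records as \emph{dominating since it is dominating below}. For $G'$-equivariance, I would apply uniqueness of lifts: for each $g' \in G'$, the holomorphic maps $e' \mapsto g'\cdot(\pi^* s)(e')$ and $e' \mapsto (\pi^* s)(g'\cdot e')$ are both lifts through $\pi$ of the same map $E' \to Y$ (by $G$-equivariance of $s$ and by the definition of the $G'$-action on $E'$), and they agree on the zero section, where both restrict to left multiplication by $g'$. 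Uniqueness of lifts then forces them to coincide everywhere.

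There is no real obstacle. The only step genuinely worth spelling out, as opposed to merely asserting, is the $G'$-equivariance check above; the rest is either already recorded in the preceding paragraph or a routine transfer through the local isomorphisms given by $d\pi$ and the pullback identification of fibres.
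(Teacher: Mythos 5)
Your proposal follows the same route as the paper: lift the $G$-action to the universal (finite) covering $G'$ via Bredon, pull back the bundle and the spray along $\pi$, and note that domination transfers because $\pi$ is a local biholomorphism. The extra verifications you supply (existence of the spray lift via the lifting criterion and $G'$-equivariance via uniqueness of lifts) are correct and simply make explicit what the paper leaves as routine.
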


Here and in the two propositions that follow, $G'$ is uniquely determined as the quotient of the universal covering group of $G$ that acts effectively on $Y'$.

Now suppose that $G$ is not semisimple but only reductive, and connected.  Then the universal covering of $G$ is not reductive, so we cannot hope to have an action of a reductive group $G'$ covering $G$ on an infinite covering of $Y$.  So let $Y'$ be a finite covering of $Y$.  Then the universal covering $G'$ of $G$ acts holomorphically on $Y'$, but not effectively.  The kernel of the action is a normal subgroup $H'$ of the centre of $G'$ such that $G''=G'/H'$ is a finite covering of $G$ and thus reductive.  It might happen that $Y'\to Y$ is not finite, yet a finite covering of $G$ still acts on $Y'$, covering the action of $G$ on $Y$.  Then the argument above shows the following.

\begin{proposition}
Let $G$ be reductive and connected and let $Y'$ be a covering of the connected $G$-elliptic manifold $Y$, such that a finite covering $G'$ of $G$ acts on $Y'$ covering the action of $G$ on $Y$.  Then $Y'$ is $G'$-elliptic.  In particular, this holds if $Y'\to Y$ is a finite covering.
\end{proposition}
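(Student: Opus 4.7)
The plan is to repeat, essentially verbatim, the construction carried out for the semisimple case in the paragraph preceding the proposition: pull back the spray data from $Y$ to $Y'$, use the given $G'$-action on $Y'$ to lift the $G$-structure to the pulled-back bundle, and then invoke covering-space theory to lift the spray itself. The hypothesis that $G' \to G$ be a finite covering plays no geometric role; it is needed only to ensure that $G'$ is still reductive (a finite cover of a reductive complex Lie group being reductive), so that $G'$-ellipticity is defined.

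First I would pull back the $G$-ellipticity data. Let $p \colon E \to Y$ be a holomorphic $G$-vector bundle carrying a dominating $G$-equivariant spray $s \colon E \to Y$, and let $\pi \colon Y' \to Y$ denote the covering map. Form the pullback bundle $E' = \pi^\ast E \subset Y' \times E$, with bundle projection $p' \colon E' \to Y'$ and tautological projection $q \colon E' \to E$ (itself a holomorphic covering map). Define a $G'$-action on $E'$ by $g' \cdot (y', e) = (g' \cdot y', g \cdot e)$, where $g \in G$ denotes the image of $g' \in G'$. This lies in $E'$ because $\pi(g' \cdot y') = g \cdot \pi(y') = g \cdot p(e) = p(g \cdot e)$, and it makes $E' \to Y'$ a holomorphic $G'$-vector bundle with $G'$-equivariant bundle projection.

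Next I would lift the spray. The claim is that $s \circ q \colon E' \to Y$ admits a unique holomorphic lift $s' \colon E' \to Y'$ through $\pi$ satisfying $s'(0_{y'}) = y'$ for all $y' \in Y'$. Existence follows from the standard lifting criterion: since $p$ is a homotopy equivalence (its fibers being contractible), the induced maps $p_\ast$ and $s_\ast$ on $\pi_1(E)$ coincide (both invert the zero section), so
\[
(s \circ q)_\ast \pi_1(E') \;=\; s_\ast q_\ast \pi_1(E') \;=\; p_\ast q_\ast \pi_1(E') \;=\; \pi_\ast \pi_1(Y').
\]
Holomorphy of $s'$ is automatic since $\pi$ is a holomorphic covering. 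Equivariance of $s'$ then follows from uniqueness of lifts: for each $g' \in G'$ with image $g$, the two maps $s' \circ g'$ and $g' \circ s'$ both lift the $G$-translated map $g \circ s \circ q$ through $\pi$ and agree on the zero section (each sends $0_{y'}$ to $g' \cdot y'$), hence coincide.

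Finally, domination of $s'$ is automatic. Since $\pi$ and $q$ are local biholomorphisms and $s$ is dominating, the restriction of $ds'$ at $0_{y'}$ to the fiber $E'_{y'}$ is conjugate through local inverses of $\pi$ and $q$ to the restriction of $ds$ at $0_{\pi(y')}$ to $E_{\pi(y')}$, hence surjective onto $T_{y'} Y'$. This produces a dominating $G'$-equivariant spray on $Y'$, so $Y'$ is $G'$-elliptic. The only step requiring real thought is the existence of the spray lift without any finiteness assumption on $Y' \to Y$, but the lifting criterion together with the homotopy equivalence $p$ dispatches this cleanly, and every remaining verification is a routine application of the uniqueness of lifts.
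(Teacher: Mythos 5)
Your proposal is correct and follows essentially the same route as the paper: pull back the $G$-vector bundle $E$ along the covering $\pi\colon Y'\to Y$, equip $E'=\pi^*E$ with the $G'$-action induced by the given $G'$-action on $Y'$ and the $G$-action on $E$, lift the spray to $E'\to Y'$, and note that domination is inherited because $\pi$ and the induced map $E'\to E$ are local biholomorphisms. The paper states the lift of the spray without elaboration; your use of the covering-space lifting criterion (via the homotopy equivalence given by the zero section) and uniqueness of lifts for equivariance simply makes explicit what the paper leaves implicit.
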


If we do not care about $G'$ being reductive, then we can take $Y'$ to be any covering space of $Y$.

Let us now drop the assumption that $G$ is connected.  This complicates matters.  In \cite[Section I.9]{Bredon1972}, Bredon first considers the case where $Y^G$ is not empty.  Let $y_0\in Y^G$ and let $Y'$ be a covering of $Y$ with $y_0'$ covering $y_0$.  Then there is an action of $G$ itself on $Y'$ if and only if the image of the induced map $\pi_1(Y',y_0')\to\pi_1(Y,y_0)$ is invariant under the action of $G$.  Note the this is only a question about the action of $G/G^0$ since the identity component $G^0$ acts trivially on $\pi_1(Y,y_0)$.  In any case, if the group action lifts to $Y'$, then $Y'$ is clearly $G$-elliptic as above.

Now suppose that $Y$ has no $G$-fixed points.  Let $Y'\to Y$ be a Galois (also called normal) 
covering of $Y$, for example the universal covering.  Then there is an action of a covering group of $G'$ on $Y'$ if and only if the following topological condition holds.
\begin{equation}   \label{eqn}
\textrm{The action of } G \textrm{ preserves the image of } [S^1,Y'] \textrm{ in }[S^1,Y].
\end{equation}
Here, $[S^1,X]$ denotes homotopy classes of continuous maps from the circle $S^1$ to $X$.  Note that if $G$ is connected, \eqref{eqn} is automatically satisfied.

\begin{proposition}
Let $G$ be reductive and let $Y'$ be a Galois covering of the connected $G$-elliptic manifold $Y$ satisfying \eqref{eqn}. Then there is an effective  holomorphic action of a covering group $G'$ of $G$ on $Y'$ covering the action on $Y$, and $Y'$ is $G'$-elliptic. If the covering $G'\to G$ is finite, then $G'$ is reductive.
\end{proposition}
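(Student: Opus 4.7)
The plan is to adapt the two-step pattern of the preceding propositions: first invoke Bredon's topological criterion to produce a holomorphic action of some covering group of $G$ on $Y'$, and then construct a dominating $G'$-equivariant spray on $Y'$ by pullback along $\pi:Y'\to Y$.

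First, I would appeal to the topological discussion preceding the proposition (cf.\ \cite[Section I.9]{Bredon1972}): condition \eqref{eqn} yields a covering group $\tilde G$ of $G$ acting continuously on $Y'$ so that $\pi$ is equivariant, where $\tilde G$ acts on $Y$ via $\tilde G\to G$. Next, I would upgrade this continuous action to a holomorphic one by observing that the action map $\tilde G\times Y'\to Y'$ is a continuous lift, through the holomorphic covering $\pi$, of the holomorphic composition $\tilde G\times Y'\to G\times Y\to Y$; continuous lifts of holomorphic maps through holomorphic coverings are holomorphic. To enforce effectiveness, let $N$ be the (closed, normal) kernel of the homomorphism $\tilde G\to\Aut(Y')$; since $G$ acts effectively on $Y$, any element of $N$ projects to the identity of $G$ and hence $N$ lies in the discrete kernel of $\tilde G\to G$. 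Thus $G':=\tilde G/N$ is a covering group of $G$ acting holomorphically and effectively on $Y'$ above the action on $Y$.

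For the spray, let $p:E\to Y$ be the given holomorphic $G$-vector bundle with dominating equivariant spray $s:E\to Y$ and set $E':=\pi^*E$. The same lifting principle applied to the \'etale covering $E'\to E$ transports the $G$-action on $E$ to a holomorphic $G'$-action on $E'$ covering the action on $E$ and commuting with the bundle projection, making $E'\to Y'$ a $G'$-vector bundle. The composition $E'\to E\to Y$ (pullback covering followed by $s$) then lifts uniquely through $\pi$ to a holomorphic map $s':E'\to Y'$ with $s'(0_{y'})=y'$ for every $y'\in Y'$. Equivariance of $s'$ follows from uniqueness of lifts together with equivariance of $s$ and $\pi$. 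For domination at $y'\in Y'$, I would use that $\pi$ is a local biholomorphism, so its differential is an isomorphism on tangent spaces; hence $s'|_{E'_{y'}}$ is a submersion at $0_{y'}$ because $s|_{E_{\pi(y')}}$ is a submersion at $0_{\pi(y')}$ by hypothesis.

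Finally, if $G'\to G$ is a finite covering, then $G'$ and $G$ share the Lie algebra $\mathfrak g$, and if $K$ is a maximal compact subgroup of $G$, its preimage $K'\subset G'$ is a finite, hence compact, cover of $K$, with Lie algebra the compact real form of $\mathfrak g$; a standard comparison of connected components identifies $G'$ with the universal complexification of $K'$, so $G'$ is reductive. The main obstacle in this proof is conceptual rather than computational: converting Bredon's purely topological lifting criterion into a holomorphic, effective action of a covering group that remains reductive in the finite case. Once this action is in place, the remaining pullback construction is essentially routine and closely parallels the arguments given above for the semisimple and connected reductive cases.
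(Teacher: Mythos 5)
Your proposal is correct and follows essentially the same route as the paper: Bredon's criterion \eqref{eqn} produces the lifted action, which is holomorphic because it covers a holomorphic action, effectiveness is arranged by passing to the quotient by the (discrete) kernel, and $G'$-ellipticity comes from pulling back the bundle and spray along $\pi$, with domination preserved since $\pi$ is a local biholomorphism. The reductivity claim for a finite covering is likewise handled as you do, so there is nothing substantive to add.
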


\begin{remark}
Note that the proposition holds when $G$ is the trivial group. Then $G'$ is the group of deck transformations of $Y'$ and $Y'$ is $G'$-elliptic.  Conversely, suppose that $Y'$ is $G'$-elliptic with holomorphic $G'$-vector bundle $E'$ and dominating spray map $s: E'\to Y'$.  Since $G'$ acts freely on $Y'$, $E'/G'$ is a holomorphic vector bundle on $Y$ and $s$ induces a dominating spray map $s/G: E'/G\to Y'/G=Y$.  Thus $Y'$ is $G'$-elliptic if and only if $Y$ is elliptic. It is unknown whether ellipticity of $Y'$ implies that $Y$ is elliptic.
\end{remark}

\section{Equivariant Oka principle for finite group actions}   \label{s:finite-action}

\noindent
In this section and the next one, we consider equivariant Oka principles for $G$-equivariant maps from a Stein $G$-manifold $X$ to a $G$-Oka manifold $Y$, where $G$ is a reductive complex Lie group.  Since $G$ has finitely many connected components, we may write $X=\bigcup_n X_n$ where each $X_n$ is a union of components of $X$ that $G$ permutes transitively.  Thus we can always reduce to the case that $G$ acts transitively on the set of components of $X$. Pick a component $X'$ and let $G'$ denote the stabiliser of $X'$ in $G$.  Then $G'\supset G^0$ is a normal subgroup of $G$ and  $X\simeq G\times^{G'}X'$.  A $G$-equivariant map $f:X\to Y$ is determined uniquely by the restriction $f':X'\to Y$ which is $G'$-equivariant.  Thus we may reduce to the case that $X$ is connected.  The same reduction works when considering $K$-equivariant maps from $X$ to $Y$, where $K$ is a maximal compact subgroup of $G$.

Here is our equivariant Oka principle for finite group actions, with approximation and jet interpolation.

\begin{theorem}   \label{t:main}
Let $G$ be a finite group.  Let $X$ be a  Stein $G$-manifold and let $Y$ be a $G$-Oka manifold.  Then every $G$-equivariant continuous map $f:X\to Y$ is homotopic, through such maps, to a $G$-equivariant holomorphic map.

{\rm (a)}  If $f$ is holomorphic on a $G$-invariant subvariety $Z$ of $X$, then the homotopy can be chosen to be constant on $Z$.

{\rm (b)}  If $f$ is holomorphic on a neighbourhood of a $G$-invariant subvariety $Z$ of $X$ and on a neighbourhood of a $G$-invariant $\O(X)$-convex compact subset $A$ of $X$, and $\ell\geq 0$ is an integer, then the homotopy can be chosen so that the intermediate maps agree with $f$ to order $\ell$ along $Z$ and are uniformly close to $f$ on $A$.
\end{theorem}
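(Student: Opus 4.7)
My plan is to reduce the problem to sections of a branched holomorphic map on the Stein quotient $X \git G$ and apply Forstneri\v c's Oka principle for such sections (\cite[Theorem 2.1]{Forstneric2003}, \cite[Theorem 6.14.6]{Forstneric2017}), flagged in the introduction as the key ingredient.

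The setup goes as follows. Since $G$ is finite and $X$ is Stein, $X \git G$ is a reduced Stein space. The diagonal quotient $W = (X \times Y) \git G$ is a reduced complex space, and the first projection descends to a surjective holomorphic map $\pi \colon W \to X \git G$ whose fibre over $[x] \in X \git G$ with $G_x = H$ is $Y/H$. A $G$-equivariant continuous (resp.\ holomorphic) map $f \colon X \to Y$ induces a continuous (resp.\ holomorphic) section $\bar f$ of $\pi$ by $[x] \mapsto [(x, f(x))]$; such sections are precisely those satisfying the isotropy condition that at each $[x]$ with $G_x = H$, $\bar f([x])$ lies in the image of $Y^H \hookrightarrow Y/H$. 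A $G$-invariant subvariety $Z \subset X$ descends to a subvariety of $X \git G$; a $G$-invariant $\O(X)$-convex compact set $A \subset X$ descends to an $\O(X \git G)$-convex compact set (by $G$-averaging of separating functions); and approximation on $A$ together with $\ell$-jet agreement along $Z$ translate cleanly to the analogous conditions on sections.

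To verify Forstneri\v c's hypotheses on $\pi$ I would use the slice theorem: around $[x] \in X \git G$ with $G_x = H$, a $G$-saturated neighbourhood of $Gx$ in $X$ is $G$-biholomorphic to $G \times^H V$ for an $H$-module slice $V$, so $\pi$ is locally modelled on $(V \times Y) \git H \to V \git H$. Over the open stratum where $G$ acts freely, $\pi$ is a holomorphic fibre bundle with Oka fibre $Y$ (Proposition \ref{p:Oka-properties}(3)); over the $H$-stratum, local equivariant sections correspond to $H$-equivariant holomorphic maps $V \to Y$ that, along $V^H$, take values in the Oka manifold $Y^H$. This stratified fibrewise Oka-ness, supplied by the $G$-Oka hypothesis on $Y$, is what feeds into Forstneri\v c's theorem for the branched map $\pi$.

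Applying the theorem to $\pi$ deforms $\bar f$ through sections to a holomorphic section with the prescribed approximation and jet interpolation, and lifting back yields the required homotopy of $G$-equivariant maps. The main obstacle I anticipate is the compatibility between Forstneri\v c's machinery and the equivariant correspondence: the sections produced by the theorem need to satisfy the isotropy condition so that they lift to $G$-equivariant maps throughout the homotopy. This may require either carving out the ``equivariant locus'' of $W$ before invoking the theorem, or verifying that a suitable formulation of the branched-map Oka principle automatically preserves the isotropy constraint along the homotopy. This interface between equivariant geometry and the Oka machinery, together with the precise identification of $\pi$ as a branched holomorphic map fitting Forstneri\v c's framework, is where I expect the technical heart of the proof to lie.
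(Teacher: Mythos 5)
Your strategy---pass to $W=(X\times Y)/G\to X/G$ and invoke Forstneri\v c's Oka principle for sections of branched maps---identifies the right external ingredient, but the step you defer (``the main obstacle I anticipate'') is not an interface issue that careful bookkeeping disposes of; it is the actual content of the theorem, and neither of your suggested fixes works as stated. The fibre of $W$ over a point of the stratum with isotropy class $H$ is $Y/H$, which in general is not a manifold, let alone Oka, so $W\to X/G$ itself does not satisfy the hypotheses of Forstneri\v c's theorem; and the ``equivariant locus'' (orbits $G(x,y)$ with $y\in Y^{G_x}$) cannot be carved out, because it is not a complex subspace of $W$: where the isotropy jumps it is not even locally closed (take $X=\C$ and $Y=\C$, each with $G=\Z_2$ acting by sign; over $\C^*/G$ the locus is all of $W$, over the origin it is only the image of $Y^{\Z_2}$, and its closure picks up non-locus points of the fibre over $0$). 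Nor can any formulation of the branched-map Oka principle keep a homotopy of sections of $W$ in this locus automatically: the constraint has positive codimension in the fibres over deeper strata and is destroyed by generic deformations. So a single global application of the theorem over $X/G$ does not go through. A further point: in part (a) the map $f$ is assumed holomorphic only \emph{on} $Z$, not on a neighbourhood, so it cannot be fed directly into an interpolation statement for sections.

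What the paper does instead is an induction over the skeleta $Q_k$ of the Luna stratification of $Q=X/G$ (finitely many strata since $G$ is finite), alternating two steps. First, the holomorphic equivariant map already constructed on $\pi^{-1}(Q_{k-1})\cup Z$, together with its homotopy to $f$, is extended to a $G$-invariant neighbourhood; this uses genuinely equivariant Stein theory absent from your outline: Siu's Stein neighbourhood of the graph, Heinzner's equivariant embedding into a $G$-module, an equivariant holomorphic tubular neighbourhood retraction, Cartan extension plus averaging over $G$, and Murayama's theorem that $Y$ is a $G$-ANR to extend the homotopy. Second, over a stratum $S$ the isotropy type is constant, and there equivariant maps genuinely are the sections of a fibre bundle with Oka fibre $Y^H$, namely the bundle associated to the principal $N_G(H)/H$-bundle $\pi^{-1}(S)^H$; Forstneri\v c's theorem is applied stratum-wise in this form, with interpolation along $Z$ and holomorphy near the lower skeleton, after which the resulting maps and the homotopy are extended holomorphically across $\pi^{-1}(Q_{k-1})$ using finiteness of $Y\to Y/G$ and the Riemann extension theorem. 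Finally, (b) is not obtained from the approximation features of Forstneri\v c's theorem directly, but deduced from (a) by a graph trick: factor a Stein neighbourhood $U$ of $Z\cup A$ through $X\times\C^N$ via an equivariant embedding, apply (a) there, and compose with a global equivariant holomorphic map approximating the embedding on $A$ and agreeing with it to order $\ell$ along $Z$. Your proposal names the destination correctly, but the stratified induction and the equivariant extension machinery that make the reduction to Forstneri\v c's theorem legitimate are exactly what is missing.
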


\begin{proof} 
We may assume that $X$ is connected.  We first prove (a) and then deduce (b) from (a).  Since $G$ is finite, there are finitely many Luna strata in $Q=X/G$.  We will use the stratification by the irreducible components of the Luna strata.  If $S$ is such a stratum, then its closure is the union of $S$ and strata of strictly lower dimension.  Let $Q_i$ be the union of the strata of dimension at most $i\geq 0$.  Then we have a filtration $Q=Q_m\supset Q_{m-1}\supset\cdots\supset Q_0\supset Q_{-1}=\varnothing$ of $Q$ by subvarieties.  Let $\pi:X\to Q$ be the quotient map.  Each difference $S_k=Q_k\setminus Q_{k-1}$, $k=0,\ldots,m$, is smooth and each of its connected components is contained in a  Luna stratum.  In the following, we will consider complex subspaces of the form $\pi\inv(W)$ for subvarieties $W$ of $Q$ with their reduced structure as subvarieties.

Let $f:X\to Y$ be a continuous $G$-map, holomorphic on a $G$-invariant subvariety $Z$ of $X$.  We will show that there is a homotopy of continuous $G$-maps from $f$ to a holomorphic map, such that the homotopy is constant on $Z$.  On $\pi^{-1}(Q_0)\cup Z$, let $f_0=f$.  Note that $\pi^{-1}(Q_0)$ is discrete (possibly empty).  The proof now proceeds inductively through the following argument for $k=1,\ldots,m$.

\smallskip\noindent
\textbf{Claim 1.}  Suppose that we have a homotopy of $f\vert{\pi^{-1}(Q_{k-1})\cup Z}$, through continuous $G$-maps, constant on $Z$, to a holomorphic $G$-map $f_{k-1}:\pi^{-1}(Q_{k-1})\cup Z \to Y$.  Then $f_{k-1}$ and the homotopy extend to a $G$-invariant neighbourhood of $\pi^{-1}(Q_{k-1})\cup Z$ in $X$.

\smallskip\noindent
\textit{Proof of Claim 1.}  For $k=1$, we start with the constant homotopy.  The graph $\Gamma$ of $f_{k-1}$ in $X \times Y$ is a Stein subvariety, hence so is $\Gamma/G\subset (X\times Y)/G$.  By Siu \cite{Siu1976}, $\Gamma/G$ has a  Stein neighbourhood in $(X\times Y)/G$, so $\Gamma$ has a $G$-invariant Stein neighbourhood $V$ in $X \times Y$.  Since $G$ is finite, $V$ has only finitely many slice types, so by Heinzner's equivariant embedding theorem \cite{Heinzner1988}, $V$ embeds equivariantly as a closed $G$-invariant submanifold in a $G$-module $\C^N$ (since $X$ and $Y$ are smooth, $V$ is smooth).  Also, $V$ has a $G$-invariant tubular neighbourhood $W$ in $\C^N$ with a $G$-equivariant holomorphic retraction $W\to V$ (the standard construction can easily be made equivariant with respect to a compact group; see Proposition \ref{prop:tubularneighbourhood} below).

Let $U$ be a $G$-invariant Stein neighbourhood of $\pi^{-1}(Q_{k-1}) \cup Z$ and consider the holomorphic map from $\pi^{-1}(Q_{k-1}) \cup Z$ to $\C^N$ given by $x\mapsto (x, f_{k-1}(x))$.  By the Cartan extension theorem, it can be extended to a holomorphic map $U\to\C^N$ and made $G$-equivariant by averaging.  After further shrinking $U$ if necessary, the extension will take $U$ into $W$.  Then, postcomposing by the retraction $W\to V$ followed by the projection to $Y$, we obtain the desired extension $f_k'$ of $f_{k-1}$.

Since $Y$ is an absolute neighbourhood retract in the category of metrisable $G$-spaces \cite[Theorems 6.4 and 8.8]{Murayama1983} and by the tube lemma, the given homotopy can be extended to a homotopy of continuous $G$-maps $U\to Y$ joining $f$ and $f_k'$, possibly with a smaller $U$.  We now have a $G$-invariant neighbourhood $U$ of $\pi^{-1}(Q_{k-1})\cup Z$ in $X$ and a holomorphic $G$-map $f'_k:U \to Y$, homotopic to $f\vert U$ through continuous $G$-maps, such that the homotopy is constant on $Z$.  

\smallskip\noindent
\textbf{Claim 2.}  There is a homotopy of $f\vert \pi\inv(Q_k)\cup Z$, through continuous $G$-maps, to a holomorphic $G$-map $f_k:\pi^{-1}(Q_k)\cup Z\to Y$, such that the homotopy is constant on $Z$.

\smallskip\noindent
\textit{Proof of Claim 2.}  Let $S$ be a connected component of $S_k=Q_k\setminus Q_{k-1}$.  It is a connected component of a Luna stratum.  The (reduced) fibres of $\pi\inv(S)\to S$ are $G$-orbits with stabiliser conjugate to a fixed subgroup $H$ of $G$.  A $G$-map from $\pi\inv(S)$ to $Y$ is the same thing as a section of the bundle $(\pi\inv(S)\times Y)/G$ over $\pi\inv(S)/G=S$.  This bundle is the same as the one associated to the principal $N_G(H)/H$-bundle $\pi\inv(S)^H$ with fibre $Y^H$.  We can work simultaneously with all the components $S$ of $S_k$.   Let $E$ denote the quotient $((\pi\inv(Q_k)\cup Z)\times Y)/G$.  It is a reduced complex space over $Q_k\cup\pi(Z)$ (which is Stein) which, over $S_k$, is locally a fibre bundle with Oka fibres of the form $Y^H$.

Choose $G$-invariant neighbourhoods $U''$ and $U'$ of $\pi^{-1}(Q_{k-1})\cup Z$ in $\pi^{-1}(Q_k)\cup Z$ such that $\overline{U''}\subset U'$, $\overline{U'}\subset U$.  Modify the homotopy of $f$ and $f'_k$ so that it remains the same on  $U''$ and is constantly $f$ over the complement of $U'$.  Thus we may reduce to the case that $f$ itself is holomorphic on $U''$.  

The graph of $f$ over $\pi\inv(Q_k)\cup Z$ gives a continuous section $\sigma$ of $E$, holomorphic over $\pi(U'')$.  By a theorem of Forstneri\v c \cite[Theorem 6.14.6]{Forstneric2017}, after possibly shrinking $U''$, there is a homotopy of continuous sections $\sigma_t$ of $E$, holomorphic on $\pi(U'')$ and agreeing with $\sigma_0=\sigma$ on $\pi(Z)$, such that $\sigma_1$ is holomorphic.  The sections $\sigma_t$ give continuous $G$-maps $h_t: \pi\inv(S_k)\cup Z\to Y$, holomorphic on $U''\cap \pi\inv(S_k)$.  The maps $h_t$ induce continuous maps $\pi\inv(Q_k)\to Y/G$.  Since $Y\to Y/G$ is a finite map, by the Riemann extension theorem, the maps $h_t$ extend holomorphically across $\pi\inv(Q_{k-1})$ and the homotopy extends as well.  (We provide more details in the more general setting of the proof of Theorem \ref{t:second-main} below).  Thus we have a homotopy of $f\vert\pi\inv(Q_k)\cup Z$, constant on $Z$, to a holomorphic $G$-map $f_k:\pi\inv(Q_k)\cup Z\to Y$ as desired, and the proof of Claim 2 and thereby (a) is complete.

\smallskip

We now deduce (b) from (a) using an equivariant adaptation of the proof of \cite[Theorem 1]{Larusson2005}.  Suppose that the continuous $G$-map $f:X\to Y$ is holomorphic on a neighbourhood of a $G$-invariant subvariety $Z$ of $X$ and on a neighbourhood of a $G$-invariant $\O(X)$-convex compact subset $A$ of $X$.  Let $\ell\geq 0$ be an integer.  Take a Stein neighbourhood $U$ of $Z\cup A$ on which $f$ is holomorphic \cite[Theorem 3.2.1]{Forstneric2017}. We may assume that $U$ is $G$-invariant.  Let $\phi:U\to\C^N$ be a $G$-equivariant holomorphic embedding of $U$ as a closed $G$-invariant submanifold of a $G$-module.  

The inclusion $i:U\hookrightarrow X$ factors through the Stein manifold $M=X\times\C^N$ (endowed with the diagonal $G$-action) as $U\overset j \to M \overset \pi \to X$, where $j=(i,\phi)$ and $\pi$ is the projection onto the first factor.  The continuous $G$-map $f\circ\pi:M\to Y$ is holomorphic on the submanifold $j(U)$ of $M$.  By (a), $f\circ\pi$ can be deformed through continuous $G$-maps $h_t:M\to Y$, $t\in [0,1]$, to a holomorphic map $h_1$, with the homotopy being constant on $j(U)$.  Using the Cartan extension theorem combined with the Oka-Weil approximation theorem for coherent analytic sheaves, followed by averaging over $G$, we can approximate $\phi$ uniformly on $A$ by a holomorphic $G$-map $\psi:X\to\C^N$ which agrees with $\phi$ to order $\ell$ along $Z$.  Then the $G$-maps $f_t=h_t\circ(\id_X,\psi):X\to Y$ form a homotopy joining $f$ to the holomorphic map $f_1=h_1\circ(\id_X,\psi)$ and agree with $f$ to order $\ell$ along $Z$.  To see that the maps $f_t$ approximate $f$ on $A$, note that the maps $h_t$ are uniformly continuous on compact neighbourhoods $\Omega$ of $j(A)$, so we can make $f_t$ close to $f$ on $A$ by choosing $\Omega$ small and choosing $\psi$ such that $(a,\psi(a))\in\Omega$ for all $a\in A$.
\end{proof}

Following the standard terminology of Oka theory, we call the property ascribed to the manifold $Y$ in Theorem \ref{t:main}(a) the \emph{basic $G$-Oka property with interpolation} ($G$-BOPI).  We call the property ascribed to $Y$ in Theorem \ref{t:main}(b) the \emph{basic $G$-Oka property with approximation and jet interpolation} ($G$-BOPAJI), and when the subvariety $Z$ is empty, we call it the \emph{basic $G$-Oka property with approximation} ($G$-BOPA).

\begin{corollary}  \label{c:equivalence}
Let a finite group $G$ act on a complex manifold $Y$.  The following properties of $Y$ are equivalent: $G$-BOPA, $G$-BOPI, $G$-BOPAJI, and the $G$-Oka property.
\end{corollary}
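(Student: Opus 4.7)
The plan is to close a cycle of implications. The arrow $G$-Oka $\Rightarrow$ $G$-BOPAJI is Theorem \ref{t:main}, and $G$-BOPAJI $\Rightarrow$ $G$-BOPA is immediate on taking $Z$ empty (with $\ell=0$). Thus the equivalences reduce to showing the two return arrows $G$-BOPA $\Rightarrow$ $G$-Oka and $G$-BOPI $\Rightarrow$ $G$-Oka.

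Fix a subgroup $H$ of $G$; the task is to prove $Y^H$ is Oka. The strategy is to verify the ordinary (non-equivariant) basic Oka property with approximation, respectively with interpolation, for $Y^H$, and then to invoke the classical equivalence of these basic properties with the Oka property, see \cite[Theorem 5.4.4]{Forstneric2017}. Since $H$ is arbitrary, this will give that $Y$ is $G$-Oka.

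The reduction goes via the following construction. Given a Stein manifold $X_0$ and a continuous map $f_0\colon X_0 \to Y^H$, form the Stein $G$-manifold $X = (G/H)\times X_0$ with $G$ acting by left multiplication on $G/H$ and trivially on $X_0$; as a disjoint union of finitely many copies of $X_0$, it is Stein. Define $F\colon X\to Y$ by $F(gH, x) = g\cdot f_0(x)$; this is well defined precisely because $f_0(X_0)\subset Y^H$, and is continuous and $G$-equivariant. The copy $\{eH\}\times X_0$ is pointwise $H$-fixed, so any $G$-equivariant map out of $X$ restricts there to a map into $Y^H$. Approximation data lifts by taking $A = (G/H)\times A_0$ for an $\O(X_0)$-convex compact $A_0\subset X_0$; the lift $A$ is $G$-invariant and $\O(X)$-convex because $X$ is a disjoint union of Stein factors. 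Interpolation data lifts by taking $Z = (G/H)\times Z_0$ for a subvariety $Z_0\subset X_0$ on which $f_0$ is holomorphic. Applying $G$-BOPA, respectively $G$-BOPI, to $F$ and restricting the resulting homotopy of $G$-equivariant maps to $\{eH\}\times X_0\cong X_0$ yields a homotopy of $f_0$ through continuous maps into $Y^H$ ending at a holomorphic map, with the prescribed approximation on $A_0$ or constancy on $Z_0$.

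The main obstacle is really only bookkeeping: one has to verify that the lifted data $(F, A, Z)$ genuinely encodes the original data for $f_0$ and that $G$-equivariance of the deformed map forces the restriction to $\{eH\}\times X_0$ back into $Y^H$. The nontrivial input is then just the classical non-equivariant equivalence between the basic Oka properties and the Oka property itself.
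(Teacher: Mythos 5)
Your proposal is correct and takes essentially the same route as the paper: your source $(G/H)\times X_0$ with $F(gH,x)=g\cdot f_0(x)$ is exactly the paper's induced Stein $G$-manifold $G\times^H X_0$ with trivial $H$-action on $X_0$, and like the paper you finish by invoking the classical fact that a manifold satisfying the non-equivariant basic Oka property with approximation (or interpolation) is Oka. The only cosmetic difference is the arrangement of the implication cycle: the paper deduces $G$-BOPI $\Rightarrow$ $G$-BOPA from the proof of Theorem \ref{t:main}(b), whereas you prove $G$-BOPI $\Rightarrow$ $G$-Oka directly by the same restriction argument (to close your cycle you should state explicitly that $G$-Oka $\Rightarrow$ $G$-BOPI is Theorem \ref{t:main}(a), which is implicit in your citation of Theorem \ref{t:main}).
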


\begin{proof}
By Theorem \ref{t:main}(b), the $G$-Oka property implies $G$-BOPAJI, which obviously implies $G$-BOPI, which in turn implies $G$-BOPA by the proof of Theorem \ref{t:main}(b).  Thus it remains to show that if $Y$ satisfies $G$-BOPA, then $Y$ is $G$-Oka.  Let $H$ be a subgroup of $G$.  If $X$ is a Stein $H$-manifold, then there is a natural homeomorphism between the space of continuous or holomorphic $G$-maps $G\times^H X \to Y$ and the space of continuous or holomorphic $H$-maps $X\to Y$, respectively.  The $G$-manifold $G\times^H X$ is Stein, and there is a natural bijection between $H$-invariant $\O(X)$-convex compact subsets of $X$ and $G$-invariant $\O(G\times^H X)$-convex compact subsets of $G\times^H X$, with $A\subset X$ corresponding to $G\times^H A\subset G\times^H X$.  It follows that $Y$ satisfies $H$-BOPA.  If $X$ is a Stein manifold with a trivial $H$-action, then an $H$-map $X\to Y$ is nothing but a map $X\to Y^H$, so $Y^H$ satisfies BOPA and is therefore Oka.
\end{proof}

If a Stein manifold is Oka, then it is elliptic (\cite[3.2.A]{Gromov1989}; see also \cite[Proposition 5.6.15]{Forstneric2017}).  Here is the equivariant version of this result.

\begin{corollary}   \label{c:Oka-implies-elliptic}
Let $G$ be a finite group and let $Y$ be a Stein $G$-manifold.  If $Y$ is $G$-Oka, then $Y$ is $G$-elliptic.
\end{corollary}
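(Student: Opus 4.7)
The plan is to imitate the non-equivariant proof that a Stein Oka manifold is elliptic (\cite[Proposition 5.6.15]{Forstneric2017}), with Theorem~\ref{t:main}(b) playing the role of the classical Oka principle. The goal is to manufacture a $G$-equivariant dominating spray on $Y$, starting from a partial spray constructed via a tubular neighbourhood and extending it to the whole trivial bundle via the equivariant basic Oka property with jet interpolation.

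First I would embed $Y$ equivariantly as a closed $G$-invariant submanifold of a $G$-module $V=\C^N$, using Heinzner's equivariant embedding theorem \cite{Heinzner1988} (applicable since $Y$ is a Stein $G$-manifold and $G$ is finite, hence compact). By Proposition~\ref{prop:tubularneighbourhood}, there exist a $G$-invariant open neighbourhood $U$ of $Y$ in $V$ and a $G$-equivariant holomorphic retraction $\rho\colon U\to Y$. On the trivial $G$-vector bundle $E=Y\times V\to Y$, equipped with the diagonal $G$-action, set $W=\{(y,v)\in E:y+v\in U\}$, a $G$-invariant open neighbourhood of the zero section $Z=Y\times\{0\}$, and define the partial spray
\[ s_0\colon W\to Y, \qquad s_0(y,v)=\rho(y+v). \]
This $s_0$ is holomorphic and $G$-equivariant, satisfies $s_0(y,0)=y$, and its fibrewise derivative at $(y,0)$ is the composition of the inclusion $T_yY\hookrightarrow V$ with $d\rho_y\colon V\to T_yY$, which is surjective. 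Hence $s_0$ is dominating along the zero section.

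Next I would extend $s_0$ to a continuous $G$-equivariant map $f\colon E\to Y$ so that Theorem~\ref{t:main}(b) becomes applicable. Fix a $G$-invariant Hermitian norm $\|\cdot\|$ on $V$ (available by averaging over the finite group $G$) and a continuous $G$-invariant function $\eta\colon Y\to(0,\infty)$ such that $y+w\in U$ whenever $\|w\|<\eta(y)$; such an $\eta$ exists by paracompactness combined with $G$-averaging. Choose a smooth cutoff $\tilde\chi\colon[0,\infty)\to[0,1]$ with $\tilde\chi\equiv 1$ on $[0,\tfrac13]$ and $\tilde\chi\equiv 0$ on $[\tfrac12,\infty)$, and set $\chi(y,v)=\tilde\chi(\|v\|/\eta(y))$. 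Then $\chi$ is continuous and $G$-invariant, and
\[ f(y,v)=\rho\bigl(y+\chi(y,v)\,v\bigr) \]
is a well-defined continuous $G$-equivariant extension of $s_0$, holomorphic on the $G$-invariant neighbourhood $\{(y,v):\|v\|<\tfrac13\eta(y)\}$ of $Z$ where it coincides with $s_0$.

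Finally I would apply Theorem~\ref{t:main}(b) to the Stein $G$-manifold $E$, the $G$-Oka target $Y$, the $G$-invariant subvariety $Z=Y\times\{0\}$, with $A=\varnothing$ and $\ell=1$. This produces a homotopy from $f$ through continuous $G$-equivariant maps to a holomorphic $G$-equivariant map $s\colon E\to Y$ whose $1$-jet along $Z$ agrees with that of $f$, hence with that of $s_0$. In particular $s(y,0)=y$ and the fibrewise differential $d_v s(y,0)$ equals the surjective map $d_v s_0(y,0)$, so $s$ is a dominating $G$-equivariant spray, witnessing $G$-ellipticity of $Y$.

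The step I expect to require the most care is the construction of the continuous $G$-equivariant extension $f$: the domain of the retraction $\rho$ need not contain a uniform tubular neighbourhood of $Y$, which is why the cutoff $\chi$ must depend on $y$ through the auxiliary function $\eta$. Everything else is either a direct quotation of an existing equivariant tool (Heinzner's embedding theorem, equivariant tubular neighbourhoods, Theorem~\ref{t:main}(b)) or a straightforward averaging argument made possible by the finiteness of $G$.
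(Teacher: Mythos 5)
Your argument is correct, and its skeleton coincides with the paper's: build a holomorphic $G$-equivariant ``local'' dominating spray near the zero section of a $G$-vector bundle over $Y$, extend it to a continuous $G$-map on the whole bundle, and then invoke Theorem \ref{t:main}(b) with $\ell=1$ and $Z$ the zero section to replace it by a holomorphic $G$-map with the same $1$-jet along $Z$, hence still dominating. Where you differ is in how the local spray is produced: the paper works on the tangent bundle of $Y$ (which is Stein) and constructs the map $s\colon V\to Y$ with $s(y)=y$ and fibrewise surjective differential by following Forstneri\v c's Proposition 3.3.1, with the equivariance supplied by the equivariant Stein theory of Lemma \ref{l:equivariant-Stein-theory}; you instead embed $Y$ equivariantly in a $G$-module $\C^N$ via Heinzner's theorem (legitimate here because a finite group action has only finitely many slice types, as the paper notes in the proof of Theorem \ref{t:main}) and take $s_0(y,v)=\rho(y+v)$ on the trivial bundle $Y\times\C^N$, using the equivariant tubular-neighbourhood retraction of Proposition \ref{prop:tubularneighbourhood}. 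Your route trades the equivariant sheaf-theoretic input for the embedding-plus-retraction package already proved in the paper, and it yields a dominating $G$-spray on a trivial $G$-bundle, which is a perfectly acceptable (arguably more concrete) witness of $G$-ellipticity; the continuous extension by a $y$-dependent cutoff is a cosmetic variant of the paper's radial retraction onto a disc bundle. One small phrasing slip: the fibrewise derivative of $s_0$ at $(y,0)$ is $d\rho_y\colon\C^N\to T_yY$ itself, which is surjective because its restriction to $T_yY$ is the identity (the composition you describe is that restriction); this does not affect the argument.
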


\begin{proof}
The proof is an adaptation of the standard proof in \cite[p.~230]{Forstneric2017}, using Theorem \ref{t:main} with jet interpolation.  The tangent bundle $X$ of $Y$ is Stein and the projection $X\to Y$ is equivariant with respect to the induced $G$-action on $X$.  Identify $Y$ with the zero section of $X$.  As in the proof of \cite[Proposition 3.3.1]{Forstneric2017}, using equivariant Stein theory (see Lemma \ref{l:equivariant-Stein-theory} below and its proof), we can produce a $G$-stable neighbourhood $V$ of $Y$ in $X$ and a holomorphic $G$-map $s:V\to Y$ such that $s(y)=y$ and $d_y s$ maps $T_y Y\subset T_y X$ surjectively onto $T_y Y$ for all $y\in Y$.  

We may assume that $s$ extends to a continuous $G$-map $X\to Y$.  Namely, take a $G$-invariant Riemannian metric on $Y$ with norm $\lVert\cdot\rVert$ such that the closure of the neighbourhood $U$ of $Y$ in $X$ defined by $\lVert x\rVert\leq 1$ lies in $V$.  Equivariantly retract $X$ onto $\overline U$ by the formula $x\mapsto x/\lVert x\rVert$ for $x\in X\setminus U$ and precompose $s\vert \overline U$ by the retraction.

By Theorem \ref{t:main}, $s$ may be deformed to a holomorphic $G$-map $X\to Y$ that agrees with $s$ to first order along $Y$, giving a dominating $G$-spray on $Y$. 
\end{proof}

\section{Equivariant Oka principle for reductive group actions}   \label{s:reductive-action}

\noindent
In this section, we present generalisations of Theorem \ref{t:main} to actions of reductive complex Lie groups $G$ that are not necessarily finite.  We first consider the case that all the $G$-orbits in the source Stein $G$-manifold $X$ are closed.  This is equivalent to all slice representations $L\to\GL(W)$ having finite image in $\GL(W)$.  If $X/G$ is connected (equivalently, irreducible), then there is a principal isotropy group,\footnote{We use the terms \textit{isotropy group} and \textit{stabiliser} interchangeably.} and if it is finite, then all isotropy groups are finite. 

We first prove a version of the Luna-Richardson theorem \cite{LunaRichardson} for arbitrary Stein $G$-manifolds.  Assuming that $X\git G$ is connected, let $H$ be the principal isotropy group of $X$ and let $X^H_0$ denote the union of the connected components of $X^H$ which intersect $X_\pr$, the set of principal orbits (closed orbits with isotropy group conjugate to $H$).  Then $N=N_G(H)/H$ acts on $(X_\pr)^H$ and preserves its closure $X^H_0$.

\begin{lemma}   \label{lem:LR}
Suppose that $Y=G\times^LW$, where $L$ is a reductive closed subgroup of $G$ and $H$ is a principal isotropy group of the $L$-module $W$ (hence a principal isotropy group of $Y$).    Then the canonical map $\rho: Y^H_0\git N\to Y\git G$ is an algebraic isomorphism.
\end{lemma}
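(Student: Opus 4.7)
The plan is to reduce this statement to the classical Luna--Richardson theorem for the reductive group $L$ acting on the $L$-module $W$ (which has principal isotropy group $H$ by hypothesis), by identifying $Y^H_0$ with an induced space over $N_G(H)$.

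The principal technical obstacle is to construct an $N_G(H)$-equivariant algebraic isomorphism
\[ \phi: N_G(H) \times^{N_L(H)} W^H_0 \xrightarrow{\sim} Y^H_0, \qquad [g,w]\mapsto [g,w], \]
where $N_L(H)=N_G(H)\cap L$ acts on $N_G(H)\times W^H_0$ by $n\cdot(g,w)=(gn\inv,nw)$. Well-definedness is immediate, and injectivity follows because if $[g,w]=[g',w']$ in $Y$ with $g,g'\in N_G(H)$, then the intertwining element $l\in L$ must satisfy $l\inv H l = H$, hence $l\in N_L(H)$. For surjectivity, the key case is the principal locus: given $[g,w]\in Y^H\cap Y_\pr$, writing $L_w = l H l\inv$ for some $l\in L$ (possible since $w$ is $L$-principal), a direct calculation shows $gl \in N_G(H)$ and $l\inv w \in W^H_\pr$, so $[g,w] = [gl, l\inv w]$ lies in $\phi(N_G(H)\times W^H_\pr)$. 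Since $\phi$ is an injective $N_G(H)$-equivariant morphism whose image contains the dense open subset $Y^H\cap Y_\pr$ of $Y^H_0$, and both source and target are smooth of matching dimension (noting that $W^H_0=W^H$ is a linear subspace of $W$ because $W$ is an $L$-module), standard arguments combining Zariski's main theorem with the closedness of $L$-orbits in $G\times W$ (where $L$ acts freely) yield that $\phi$ is an isomorphism onto $Y^H_0$.

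With this identification in hand, the rest of the proof is a chain of routine coordinate-ring manipulations. The Frobenius-type identity for twisted products gives $\C[Y]^G=\C[G\times W]^{G\times L}=\C[W]^L$, and applied to the presentation $Y^H_0 \cong N_G(H)\times^{N_L(H)}W^H_0$ gives $\C[Y^H_0]^{N_G(H)}=\C[W^H_0]^{N_L(H)}$. Since $H$ acts trivially on both $Y^H_0$ and $W^H_0$, these rings also equal $\C[Y^H_0]^N$ and $\C[W^H_0]^{N_L(H)/H}$ respectively. The classical Luna--Richardson theorem applied to the $L$-module $W$ then provides the isomorphism $\C[W]^L\xrightarrow{\sim}\C[W^H_0]^{N_L(H)/H}$ given by restriction. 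Assembling these four identifications into a square, commutativity is automatic because every arrow is restriction along the inclusion $w\mapsto [e,w]$, so $\rho^*:\C[Y]^G\to\C[Y^H_0]^N$ is an isomorphism. Since $Y\git G$ and $Y^H_0\git N$ are affine varieties, it follows that $\rho$ is an algebraic isomorphism.
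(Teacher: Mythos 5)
Your overall route is genuinely different from the paper's (the paper verifies the three hypotheses of Luna--Richardson's Theorem 2.2 directly for $Y=G\times^LW$: each principal fibre meets $Y^H_0$ in a unique closed $N$-orbit, $Y^H_0\git N$ is irreducible, and every closed $G$-orbit meets $Y^H_0$), and your invariant-theoretic bookkeeping at the end is fine. But there is a genuine gap at the step that carries all the geometric content: the claim that $\phi: N_G(H)\times^{N_L(H)}W^H \to Y^H_0$ is an isomorphism, and specifically its \emph{surjectivity}. A point of $Y^H$ is of the form $[g,w]$ with $g\inv Hg\subset L$ and $w\in W^{g\inv Hg}$, and it lies in the image of $\phi$ exactly when $g\inv Hg$ is conjugate to $H$ \emph{inside} $L$, i.e.\ $g\in N_G(H)L$. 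Your computation on the principal locus only shows that points of $Y^H\cap Y_\pr$ have this form; a priori a connected component of $Y^H$ meeting $Y_\pr$ could also contain points (for instance points of the zero section $[g,0]$) whose subgroup $g\inv Hg\subset L$ is $G$-conjugate but not $L$-conjugate to $H$, and these are not in the image of $\phi$. The tools you invoke cannot close this: injectivity plus dense image plus Zariski's main theorem yields at best that $\phi$ is an open immersion onto a dense open subset of $Y^H_0$, and an open immersion with dense image need not be onto; the closedness of the free $L$-orbits in $G\times W$ only makes the quotient map well behaved and says nothing about whether the $L$-saturation of $N_G(H)\times W^H$ is closed in the preimage of $Y^H$. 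In addition, the assertion that $Y^H\cap Y_\pr$ is open and dense in $Y^H_0$ is stated without proof; density in each component requires openness of $Y^H\cap Y_\pr$ in $Y^H$, which is exactly what the paper's fibrewise computation (each fibre over the principal stratum is $G\times^HU$ with $U^H=U\git H=\{0\}$, so its $H$-fixed set is the single closed $N$-orbit $N_G(H)\times^H\{0\}$) delivers, and is not automatic.

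The statement you want is true, and your reduction to the classical Luna--Richardson theorem for the $L$-module $W$ can be completed, but only by adding an argument of the same nature as the paper's. One way: by Luna's argument (reproduced in the paper's proof of Proposition \ref{p:Oka-examples}(2)), $(G/L)^H$ is a finite union of $N_G(H)$-orbits, each open and closed; hence $N_G(H)L$ is open and closed in $\{g\in G: g\inv Hg\subset L\}$, so the image of $\phi$ is open and closed in $Y^H$, meets every component of $Y^H_0$ (via the principal points you produced), and is connected to $Y_\pr$ along the linear slices $\{g\}\times W^H$ -- this gives image $=Y^H_0$. Alternatively, one can avoid full surjectivity by working with restriction maps on invariants directly, but that still requires the density of $Y^H\cap Y_\pr$ in $Y^H_0$, i.e.\ the fibrewise computation above. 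As written, the proposal replaces the two substantive verifications (the paper's conditions (1) and (3)) by an appeal to ``standard arguments'' that do not prove them, so the proof is incomplete at its key step.
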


\begin{proof}
We apply \cite[Theorem 2.2]{LunaRichardson}. For this we need to establish the following three statements.
\begin{enumerate}
\item For each principal fibre $F=G\times^HU$, the intersection $F\cap Y^H_0$ contains a unique closed $N$-orbit.  Note that $U$ is an $H$-module with $U\git H=U^H=\{0\}$.
\item $Y^H_0\git N$ is irreducible.
\item Each closed $G$-orbit in $Y$ intersects $Y^H_0$.
\end{enumerate}
For (1) we may assume that $H=G_y$, where $y\in  Y_\pr\cap Y^H\subset Y^H_0$. Now $(G/H)^H = N_G(H)/H$, so $(G\times^HU)^H= N_G(H)\times^H\{0\}=Ny$, which is closed, giving (1).   Moreover, we have shown that the inverse image of $(Y\git G)_\pr$ under $\rho$ is $Y^H\cap Y_\pr$, which is open and dense in $Y^H_0$. Thus $Y^H_0\git N$ has a dense open subset isomorphic to $(Y\git G)_\pr$, which is irreducible, and we have (2).  Finally, let $Gy$ be a closed orbit in $Y$. Then by changing $y$ by an element of $G$ we can arrange that $H$ is the principal isotropy group of the slice representation of $G_y$ at $y$. Thus $y\in Y^H_0$, giving (3).
\end{proof}

\begin{proposition}[Luna-Richardson \cite{LunaRichardson}]\label{prop:LR}
Let $X$ be a Stein $G$-manifold such that $X\git G$ is connected. Then restriction to $X^H_0$ induces an isomorphism $X^H_0\git N\to X\git G$.
\end{proposition}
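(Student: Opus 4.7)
The plan is to use Luna's holomorphic slice theorem to reduce the assertion to the local model $G\times^L W$ handled by Lemma \ref{lem:LR}, and then glue the resulting local biholomorphisms. First I would check that restriction to $X^H_0$ induces a well-defined holomorphic map $\rho: X^H_0 \git N \to X\git G$: the composition $X^H_0 \hookrightarrow X \to X\git G$ is $N_G(H)$-invariant (the target carries the trivial $G$-action, hence the trivial $N_G(H)$-action) and $H$ acts trivially on $X^H_0$, so it descends to $N=N_G(H)/H$. Both quotients exist as Stein analytic spaces by Heinzner's theorem, since $G$ and $N$ are reductive.

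Next I would cover $X\git G$ by Stein open sets $V = U\git G$, where each $U$ is a $G$-saturated Stein neighbourhood of a closed orbit, $G$-biholomorphic to a local model $G\times^L W$ with $L$ reductive and $W$ an $L$-module slice. Since $X\git G$ is connected and normal, hence irreducible, the principal isotropy group of $X$ is uniquely determined up to conjugacy, namely by $H$. A standard consequence of the slice theorem is that the principal isotropy of the slice representation $L\to\GL(W)$ is $G$-conjugate to $H$. By replacing the reference point of each slice neighbourhood within its orbit if necessary, I can arrange $H\subset L$ and $H$ to be a principal isotropy group of $W$ as an $L$-module. Lemma \ref{lem:LR} then yields a biholomorphism $\rho_U: U^H_0 \git N \to U\git G = V$.

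Finally I would verify the compatibility $X^H_0 \cap U = U^H_0$: the principal stratum of $U$ equals $X_\pr \cap U$ because $U$ is $G$-saturated and $X_\pr$ is open and dense, and principal points of $X^H$ are dense in each component of $X^H_0$; hence a connected component of $X^H$ meets $X_\pr$ if and only if each of its component-pieces inside $U$ meets $U_\pr$. Thus $\rho_U$ is the restriction of $\rho$ over $V$, and since such $V$ cover $X\git G$, $\rho$ is a global biholomorphism. The main obstacle is the coherent choice of the fixed subgroup $H$ across slice charts: $H$ is defined only up to $G$-conjugacy, while the statement fixes one $H$ and one $N=N_G(H)/H$, and aligning this global choice with the principal isotropy of each local slice representation requires the connectedness hypothesis on $X\git G$ to guarantee a single conjugacy class of principal isotropies throughout $X$.
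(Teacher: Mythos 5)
Your proposal is correct and takes essentially the same route as the paper: the paper's own proof is precisely the one-line observation that $X$ is locally $G$-biholomorphic to tubes $G\times^L S$, with $S$ an $L$-saturated neighbourhood of $0$ in an $L$-module whose principal isotropy group is $H$, so that Lemma \ref{lem:LR} applies locally and the resulting local isomorphisms glue. Your extra verifications (well-definedness of the map, $U_\pr=X_\pr\cap U$, the compatibility $X^H_0\cap U=U^H_0$ via density of $(X_\pr)^H$ in $X^H_0$, and the single conjugacy class of principal isotropy groups coming from connectedness of $X\git G$) are exactly the details the paper leaves implicit.
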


\begin{proof}
This follows from Lemma \ref{lem:LR} since $X$ is locally $G$-biholomorphic to tubes $G\times^LS$, where $S$ is an $L$-saturated neighbourhood of $0$ in an $L$-module $W$ with principal isotropy group $H$.
\end{proof}

\begin{proposition}\label{prop:induced}
 Let $X$ be a Stein $G$-manifold with all $G$-orbits closed and $X/G$ connected.  Then the natural map $G\times^{N_G(H)} X^H_0\to X$, $[g,x]\to gx$, is a $G$-biholomorphism. 
\end{proposition}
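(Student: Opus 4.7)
My plan is to combine Proposition \ref{prop:LR} (Luna--Richardson) with the holomorphic slice theorem and a normality property of the principal isotropy derived from the closed-orbit hypothesis.

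First, the map $\phi\colon G\times^{N_G(H)} X^H_0\to X$, $[g,x]\mapsto gx$, is well defined, $G$-equivariant, and holomorphic, since $N_G(H)$ preserves $X^H$ and the union of its components meeting $X_\pr$. Surjectivity is immediate from Proposition \ref{prop:LR}: every closed $G$-orbit meets $X^H_0$, and by hypothesis every $G$-orbit is closed.

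The crucial observation is that for every $x\in X$, the principal isotropy $H$ is normal in the stabiliser $G_x$, so $G_x\subset N_G(H)$. Indeed, applying the holomorphic slice theorem around $Gx$, a saturated neighbourhood is $G$-biholomorphic to $G\times^L W$ with $L=G_x$ reductive and $W$ the slice representation; the closed-orbit hypothesis forces all $L$-orbits on $W$ to be closed. A Hilbert--Mumford argument then shows that $L^0$ acts trivially on $W$: otherwise, some nontrivial irreducible $L^0$-subrepresentation would contain a highest-weight vector whose orbit has $0$ in its closure via a dominant one-parameter subgroup, contradicting closedness. Hence $L$ acts on $W$ through the finite quotient $L/L^0$, and the principal isotropy coincides with the kernel $K=\Ker(L\to\GL(W))$, which is normal in $L$ and acts trivially on $W$. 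Taking $H=K$ we obtain $H\triangleleft L$ and $W^H=W$.

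Injectivity now follows: if $g_1 x_1=g_2 x_2$ with $x_i\in X^H_0$, Proposition \ref{prop:LR} produces $n\in N_G(H)$ with $x_1=n x_2$; then $g_2^{-1}g_1 n\in G_{x_2}\subset N_G(H)$, so $g_2^{-1}g_1\in N_G(H)$, and the two pairs are $N_G(H)$-equivalent. To upgrade the bijection to a biholomorphism, observe that in the slice chart the component of $X^H$ through $x_0$ equals $N_G(H)\times^L W\subset G\times^L W$, and induction in stages gives
\[
G\times^{N_G(H)}\bigl(N_G(H)\times^L W\bigr)\cong G\times^L W,\quad [g,[m,w]]\mapsto[gm,w],
\]
which is exactly $\phi$ in this chart. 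Hence $\phi$ is a local $G$-biholomorphism, and combined with global bijectivity, a global $G$-biholomorphism. The hard part will be the Hilbert--Mumford step showing $L^0$ acts trivially on the slice; everything else follows routinely.
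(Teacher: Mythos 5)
The step that fails is your ``crucial observation'' that $H$ is normal in $G_x$ for \emph{every} $x\in X$. The slice argument at a point $x$ with $L=G_x$ only shows that the kernel $K_x=\Ker(L\to\GL(W))$ of the slice representation is normal in $L$ and is \emph{a} principal isotropy group, i.e.\ conjugate to $H$ in $G$; the phrase ``taking $H=K$'' replaces the fixed subgroup $H$ that defines $X^H_0$ by a conjugate depending on $x$, which is not permitted. As stated the observation is false: for $X=G\times^LW$ with $L$ acting on $W$ through a finite quotient with kernel $H\triangleleft L$ not normal in $G$, the stabiliser of $[g,w]$ for generic $w$ is $gHg^{-1}$, which for $g\notin N_G(H)$ does not even contain $H$. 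What your injectivity argument actually needs is the claim only for points of $X^H_0$, and there it is true, but the proof must use membership in $X^H_0$, which your argument never does. One way to supply it: the component $C$ of $X^H$ through $x$ meets $X_\pr$, and a point $y\in C\cap X_\pr$ has stabiliser containing $H$ and conjugate to it, hence equal to $H$, with $H$ acting trivially on the slice at $y$; since all slice representations have finite image, orbit dimensions are locally constant, so $\dim G_x=\dim H$, hence $\mathfrak l=\mathfrak h$ and the slices at $x$ and $y$ have equal dimension; comparing $\dim (T_xX)^H=\dim(\mathfrak g/\mathfrak l)^H+\dim W_x^H$ with $\dim (T_yX)^H=\dim(\mathfrak g/\mathfrak h)^H+\dim W_y$ along the connected manifold $C$ forces $W_x^H=W_x$, so $H\subset K_x$, and conjugacy then gives $H=K_x\triangleleft G_x$. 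Incidentally, the Hilbert--Mumford step you single out as the hard part is the routine one (the paper records at the start of Section \ref{s:reductive-action} that closedness of all orbits is equivalent to all slice representations having finite image); the conjugacy-versus-equality point is where the real work lies.

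A second, smaller gap: from Proposition \ref{prop:LR} and $g_1x_1=g_2x_2$ you conclude that $x_1=nx_2$ for some $n\in N_G(H)$, but an isomorphism of categorical quotients only identifies points whose $N$-orbit closures meet, so you need the relevant $N$-orbits in $X^H_0$ to be closed, which you do not address. This is fixable, and in fact once $H=K_x$ is known for all $x\in X^H_0$ you can bypass Proposition \ref{prop:LR} in the injectivity step: if $x_1=gx_2$ with $x_1,x_2\in X^H_0$, the slice kernel at $x_1$ equals both $H$ and $gHg^{-1}$, whence $g\in N_G(H)$. The remaining ingredients (surjectivity from Proposition \ref{prop:LR} plus closedness of all $G$-orbits, and the local identification with $G\times^LW$ by induction in stages, using that $X^H_0$ agrees with the smooth manifold $X^H$ near each of its points) are sound; your route is more explicit than the paper's, which deduces the proposition directly from Proposition \ref{prop:LR}, but as written it does not close.
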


\begin{proof}  
Since all orbits are closed, the result follows from Proposition \ref{prop:LR}. 
\end{proof}

Note that the principal isotropy group of $X_0^H$ relative to $N_G(H)/H$ is trivial and all orbits are closed.

\noindent
{\bf Erratum added 29 Sep 2023.}  Lemma 5.4 is false. Here is a counterexample due to P.~Heinzner. Let $X=\C^*$, $Y=\C$, both with the multiplicative action of $G=\C^*$. Let $S=X\times\{0\}$. Then $S$ is Stein and $G$-stable, but the only $G$-stable neighbourhood of $S$ is $X\times Y$.

Lemma 5.4 is used in Proposition 5.5 and the proof of Theorem 5.6. 

In Proposition 5.5, $\Omega$ is a $G$-stable neighbourhood of $X$ in $U$ which we want to shrink to be Stein. Since $\Omega$ consists of closed orbits of dimension $\dim G$, it is $G$-saturated. Hence $\Omega/G$   is a neighbourhood of $X/G$ in $V\git G$. We can just replace  $\Omega$ by the inverse image of a Stein subneighbourhood of $X/G$ in $\Omega/G$.

In Theorem 5.6, we only use Lemma 5.4 to show that the graph $\Gamma_f$ of a $G$-equivariant holomorphic map $f\colon X\to Y$ has \emph{some\/} Stein $G$-stable neighbourhood.  Since all $G$-orbits in $X$ are closed with finite stabiliser, the quotient  of $X\times Y$ by $G$ exists. Let $U$ be a Stein neighbourhood of $\Gamma_f/G$ in $(X\times Y)/G$. Then the inverse image of $U$ is the required $G$-stable Stein neighbourhood of $\Gamma_f$. 

\begin{lemma}   \label{lem:smallGStein} 
Let $X$ be a Stein $G$-manifold all of whose isotropy groups are finite.  Let $Y$ be a complex $G$-manifold and let $S$ be a $G$-stable Stein subvariety of $X\times Y$.  Let $U$ be a  neighbourhood of $S$ in $X\times Y$.  Then there is a $G$-stable Stein neighbourhood $U'$ of $S$ contained in $U$.
\end{lemma}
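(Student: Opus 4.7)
The plan is to form the analytic geometric quotient $\pi : X \times Y \to Z := (X \times Y)/G$, apply Siu's theorem \cite{Siu1976} to produce a Stein open neighbourhood of the closed Stein subspace $\pi(S) \subset Z$ inside the image of the $G$-saturated interior of $U$, and pull back to obtain the desired $U'$.

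First I would establish that the $G$-action on $X \times Y$ is proper with finite stabilisers and closed orbits. Stabilisers $G_{(x,y)} = G_x \cap G_y$ are finite, being subgroups of the finite groups $G_x$. On $X$, finiteness of all stabilisers forces every orbit to be closed (the closure of a non-closed orbit would contain a strictly smaller-dimensional orbit, whose stabiliser would then be infinite), and together with the Stein property of $X$ this implies that the $G$-action on $X$ is proper. Properness then carries over to $X \times Y$: given a sequence with $(x_n, y_n) \to (x,y)$ and $g_n(x_n, y_n) \to (z,w)$, properness on $X$ applied to $x_n \to x$ and $g_n x_n \to z$ already extracts a convergent subsequence of $(g_n)$. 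By the theory of analytic Hilbert quotients for proper reductive actions (Heinzner), $Z$ exists as a normal complex space, $\pi$ is both open and closed, and the Stein property passes between $\pi$-saturated subspaces of $X \times Y$ and their images in $Z$; in particular $\pi(S) = S/G$ is a closed Stein subspace of $Z$.

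Next I would use closedness of $\pi$ to shrink $U$ to a $G$-saturated neighbourhood of $S$. The $G$-saturated interior
\[ U^{\mathrm{sat}} = \{\, w \in X \times Y : Gw \subset U \,\} = \pi^{-1}\bigl( Z \setminus \pi((X \times Y) \setminus U) \bigr) \]
is open in $X \times Y$ because $\pi$ is closed, and it contains $S$ because $S$ is $G$-invariant and lies in $U$. Applying Siu's theorem inside the open subset $\pi(U^{\mathrm{sat}}) \subset Z$ to the closed Stein subspace $\pi(S)$ yields a Stein open neighbourhood $\tilde U' \subset \pi(U^{\mathrm{sat}})$ of $\pi(S)$. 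The preimage $U' := \pi^{-1}(\tilde U')$ is then a $G$-stable open neighbourhood of $S$ with $U' \subset U^{\mathrm{sat}} \subset U$, and $U'$ is Stein because $\pi|_{U'} : U' \to \tilde U'$ is an analytic Hilbert quotient by a proper $G$-action with Stein base.

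The main obstacle will be to set up the geometric quotient $Z$ and to verify the Heinzner formalism (openness and closedness of $\pi$ and equivalence of Steinness across $\pi$) in the non-Stein ambient manifold $X \times Y$. The resolution is that a proper holomorphic action of a reductive complex Lie group with finite stabilisers is locally modelled on the action of a finite group on an open slice neighbourhood, so all of the required statements reduce locally to the classical theory of finite-group quotients of complex manifolds already used in the proof of Theorem \ref{t:main}.
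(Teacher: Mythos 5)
Your proposal is correct and follows essentially the same route as the paper's proof: pass to the quotient $(X\times Y)/G$, which is a complex space since the action is locally a finite-group quotient, note that $S/G$ is Stein, use properness (equivalently, closedness of the quotient map) to cut out a saturated open set over which $U$ lies, apply Siu's theorem there, and pull the resulting Stein neighbourhood back to $X\times Y$. The only difference is presentational: the paper works with the closed image of the saturation $GU^c$ and takes its complement, while you take the saturated interior of $U$ directly, and you spell out the properness and the Steinness of the preimage, which the paper leaves implicit.
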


\begin{proof}
By the holomorphic slice theorem, $(X\times Y)/G$ is locally a quotient by finite groups, hence is a complex space, and $S/G$ is Stein in $(X\times Y)/G$.  Now $U^c$ is closed and, since the action of $G$ is proper, $GU^c$ is closed and $G$-stable, hence its image $B$ in $(X\times Y)/G$ is closed and does not intersect $S/G$.  Thus there is a Stein neighbourhood of  $S/G$ in $B^c$ and its inverse image in $X\times Y$ is the required $U'$.
\end{proof}

Now we need an equivariant tubular neighbourhood result.  Let $K$ be a maximal compact subgroup of $G$.

\begin{proposition}   \label{prop:tubularneighbourhood} 
Assume that $X$ is a closed $G$-stable submanifold of a $G$-module $V$ and that all the isotropy groups of $X$ are finite.  Then there is a $G$-stable neighbourhood $\Omega$ of $X$ in $V$ and a deformation retraction $h_t : \Omega\to \Omega$ of $\Omega$ to $X$, $t\in[0,1]$, such that each $h_t$ is holomorphic and $G$-equivariant.
\end{proposition}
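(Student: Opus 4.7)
The plan is to realise $\Omega$ as the image of a $G$-stable neighbourhood of the zero section of the normal bundle to $X$ in $V$, under an equivariant exponential-type map built from the linear structure on $V$, and then to define $h_t$ by pulling back the fibrewise homothety $v\mapsto(1-t)v$.

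Since $V$ is a vector space, the restriction $TV|_X=X\times V$ is a trivial $G$-equivariant holomorphic vector bundle on $X$, containing $TX$ as a $G$-equivariant holomorphic subbundle. First I would produce a $G$-equivariant holomorphic splitting $\sigma\colon N\hookrightarrow X\times V$ of the short exact sequence
\[ 0\to TX\to X\times V\to N\to 0 \]
of $G$-equivariant holomorphic vector bundles on the Stein $G$-manifold $X$. Such a splitting exists by the equivariant version of Cartan's Theorem B (see Lemma~\ref{l:equivariant-Stein-theory} below, going back to \cite{HK1995}), applied to the $G$-sheaf $\Hom(N,TX)$ on $X$.

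Next I would define $\Phi\colon N\to V$ by $\Phi(x,v):=x+\sigma_x(v)$, a holomorphic $G$-equivariant map restricting to the inclusion of $X$ on the zero section, whose differential at $(x,0)$ is the identification $T_xX\oplus N_x\xrightarrow{\sim}V$ provided by the splitting. Thus $\Phi$ is a local biholomorphism along the zero section. To produce a $G$-stable open neighbourhood of the zero section on which $\Phi$ is globally injective, I would use that the finite-isotropy hypothesis, together with Lemma~\ref{lem:smallGStein} and the holomorphic slice theorem, ensures that neighbourhoods of the zero section in $N$ and of $X$ in $V$ admit local complex-space quotients by $G$; the map $\Phi$ descends to a local biholomorphism $\bar\Phi$ of these quotients, restricting to the injective inclusion $X/G\hookrightarrow V/G$ on $X/G$. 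The standard argument that the closure of the non-injectivity locus of a local biholomorphism is disjoint from the diagonal, together with paracompactness, then yields an open neighbourhood of $X/G$ on which $\bar\Phi$ is injective; its preimage is a $G$-stable open neighbourhood $N'$ of the zero section on which $\Phi$ itself is injective. Replacing $N'$ by
\[ N'':=\{(x,v)\in N:(x,tv)\in N'\text{ for all }|t|\le 1\}, \]
which is $G$-stable (since fibrewise scaling commutes with the $G$-action on $N$), open (by compactness of the closed unit disc), and fibrewise star-shaped, I would set $\Omega:=\Phi(N'')$.

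Finally, the $G$-equivariant biholomorphism $\Phi\colon N''\to\Omega$ allows me to define $h_t\colon\Omega\to\Omega$ by $h_t(\Phi(x,v)):=\Phi(x,(1-t)v)$; this is well defined by star-shapedness, holomorphic, and $G$-equivariant, with $h_0=\id_\Omega$, $h_1$ a retraction onto $X$, and $h_t|_X=\id_X$. The main obstacle is the injectivity step: constructing a $G$-saturated open neighbourhood of the zero section on which the locally invertible $\Phi$ is globally injective. This is precisely where the finite-isotropy hypothesis enters essentially, through the local complex-space quotients near $X$ provided by Lemma~\ref{lem:smallGStein}; without it, the passage from ``locally injective'' to ``$G$-equivariantly globally injective on a saturated neighbourhood'' would be considerably more delicate.
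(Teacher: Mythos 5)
Your proposal follows essentially the same route as the paper's proof: an equivariant holomorphic complement to $TX$ in the trivial bundle $X\times V$ (the paper takes a $K$-stable complement which is a holomorphic $G$-subbundle, you split the exact sequence via Lemma \ref{l:equivariant-Stein-theory}), the affine map $(x,v)\mapsto x+\sigma_x(v)$ as the tubular neighbourhood map, a $G$-stable shrinking that uses the finite-isotropy hypothesis through Lemma \ref{lem:smallGStein}, and the retraction by fibrewise scaling on a star-shaped (in the paper: invariant norm-ball) neighbourhood. One caveat, at the same level of terseness as the paper's own sketch of the corresponding step: injectivity of the descended map $\bar\Phi$ on the quotients does not by itself give injectivity of $\Phi$ upstairs, since distinct points $n\neq gn$ of one orbit can share the image $\Phi(n)$ whenever that image has a nontrivial (finite) stabiliser not stabilising $n$; one needs a further equivariant shrinking guaranteeing $G_n=G_{\Phi(n)}$ near the zero section, obtained e.g.\ from local injectivity of $\Phi$ on small $G_x$-invariant neighbourhoods of points $(x,0)$ together with properness of the action.
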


\begin{proof}
We have the trivial bundle $X\times V$ and the subbundle $TX$. There is a $K$-stable complement $\nu$ to $TX$ in $X\times V$, which we may assume is a holomorphic $G$-vector bundle. Since the $G$-action on $X$ is proper, there is a $G$-invariant norm $\lVert\cdot\rVert$ on $\nu$. 

Let $\Theta: \nu\to V$, $(y,v)\mapsto y+v$.  Then $\Theta$ is $G$-equivariant and is a biholomorphism from a neighbourhood $\Omega'$ of the zero section of $\nu$ onto a neighbourhood $\Omega$ of $X$.  Let  $U\subset V$ be the set of points with closed orbit and finite stabiliser, an open $G$-invariant neighbourhood of $X$.  We may assume that $\Omega\subset U$.  By Lemma \ref{lem:smallGStein}, we may assume that $\Omega$ is $G$-invariant (and Stein), hence that $\Omega'$ is $G$-invariant and Stein.  Then $\Theta$ is a $G$-biholomorphism of $\Omega'$ onto $\Omega$.  Choose a $G$-invariant continuous function $f: X\to (0,\infty)$ such that 
\[  \{v\in\nu_x : \lVert v\rVert<f(x)\}\subset \Omega'. \]
By shrinking $\Omega'$ (hence also $\Omega$), we can assume that there is equality. Then there is a deformation retraction of $\Omega'$ to the zero section by holomorphic $G$-equivariant maps (given by scalar multiplication by elements of $[0,1]$), hence there is a deformation retraction of $\Omega$ onto $X$ by holomorphic $G$-maps.
\end{proof}

\begin{theorem}   \label{t:second-main} 
Let $G$ be a reductive complex Lie group and let $K$ be a maximal compact subgroup of $G$.  Let $X$ be a Stein $G$-manifold all of whose isotropy groups are finite.  Let $Y$ be a $G$-Oka manifold.  Then every $K$-equivariant continuous map $f:X\to Y$ is homotopic, through such maps, to a $G$-equivariant holomorphic map. 

{\rm (a)}  If $f$ is holomorphic on a $G$-invariant subvariety $Z$ of $X$, then the homotopy can be chosen to be constant on $Z$.

{\rm (b)}  If $f$ is holomorphic on a neighbourhood of a $G$-invariant subvariety $Z$ of $X$ and on a neighbourhood of a $K$-invariant $\O(X)$-convex compact subset $A$ of $X$, and $\ell\geq 0$ is an integer, then the homotopy can be chosen so that the intermediate maps agree with $f$ to order $\ell$ along $Z$ and are uniformly close to $f$ on $A$.
\end{theorem}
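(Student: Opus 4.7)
The plan is to parallel the inductive proof of Theorem~\ref{t:main}, replacing the appeals to Heinzner's embedding theorem for finite groups by the tools just assembled for reductive actions with finite stabilisers, namely Lemma~\ref{lem:smallGStein} and Proposition~\ref{prop:tubularneighbourhood}. A secondary adaptation is needed to pass between $K$-equivariance and $G$-equivariance: averaging a holomorphic map over the compact group $K$ yields a $K$-equivariant holomorphic map, which is automatically $G$-equivariant because $K$ meets every component of $G$ and is totally real in $G^0$.

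First I would reduce to $X$ connected as in the opening paragraph of Section~\ref{s:finite-action}. I would then stratify $Q = X\git G$ by Luna strata, let $Q_k$ be the union of strata of dimension at most $k$, with $\pi: X \to Q$ the quotient, and construct inductively on $k = -1, 0, \ldots, \dim Q$ a holomorphic $G$-equivariant map $f_k : \pi\inv(Q_k) \cup Z \to Y$ together with a homotopy of $K$-equivariant continuous maps, constant on $Z$, joining $f|_{\pi\inv(Q_k) \cup Z}$ to $f_k$. The base case $k=-1$ uses $f_{-1} = f|_Z$ and the constant homotopy.

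The inductive step splits into the analogues of Claims 1 and 2 from the proof of Theorem~\ref{t:main}. For Claim~1, the graph of $f_{k-1}$ is a $G$-stable Stein subvariety of $X \times Y$, which has finite isotropy since $X$ does; Lemma~\ref{lem:smallGStein} supplies a $G$-stable Stein neighbourhood $V$. Embedding $V$ equivariantly in a $G$-module $\C^N$ as a closed submanifold and applying Proposition~\ref{prop:tubularneighbourhood} gives a $G$-stable neighbourhood $W$ of $V$ in $\C^N$ together with a holomorphic $G$-equivariant retraction $W \to V$. The Cartan extension theorem then extends $x \mapsto (x, f_{k-1}(x))$ to a holomorphic map from a $G$-invariant neighbourhood of $\pi\inv(Q_{k-1}) \cup Z$ into $\C^N$; averaging over $K$ makes it $K$-equivariant and hence $G$-equivariant; composing with the retraction and with the projection to $Y$ yields a holomorphic $G$-equivariant extension $f'_k$ to a $G$-invariant neighbourhood, and the continuous homotopy is transported using that $Y$ is a $K$-ANR in the sense of \cite{Murayama1983}. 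For Claim~2, the complex space $E = ((\pi\inv(Q_k) \cup Z) \times Y)/G$ is, over each component of $S_k = Q_k \setminus Q_{k-1}$ of orbit type $(H)$, locally the fibre bundle associated to the principal $N_G(H)/H$-bundle $\pi\inv(S_k)^H \to S_k$ with Oka fibre $Y^H$. Forstneri\v c's theorem on sections of branched holomorphic maps (\cite[Theorem~6.14.6]{Forstneric2017}), applied with approximation on the already holomorphic piece and interpolation on $\pi(Z)$, homotopes the continuous section coming from $f'_k$ to a holomorphic one, which translates back into the desired holomorphic $G$-equivariant $f_k$ on $\pi\inv(Q_k) \cup Z$.

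Part~(b) is then deduced from~(a) by the same factorisation trick as in Theorem~\ref{t:main}: embed a Stein $G$-neighbourhood $U$ of $A \cup Z$ equivariantly into a $G$-module $\C^N$ via a map $\phi$, factor $U \hookrightarrow X$ through $X \times \C^N$ via $x \mapsto (x,\phi(x))$ followed by $\pr_1$, apply~(a) to $f \circ \pr_1$ on the Stein $G$-manifold $X \times \C^N$ to obtain a holomorphic $G$-equivariant deformation, and combine it with a global $G$-equivariant approximation of $\phi$ by an equivariant Cartan/Oka--Weil extension that jet-interpolates $\phi$ along $Z$. I expect the principal obstacle to be Claim~1, and in particular the reductive equivariant embedding of the Stein neighbourhood $V$ into a $G$-module together with the verification that the $K$-averaging commutes compatibly with the chain of retractions so as to preserve both $K$-equivariance and holomorphicity through to the final map into $Y$; a lesser subtlety is the base case $k=0$, which unlike in Theorem~\ref{t:main} involves positive-dimensional closed $G$-orbits but is simply subsumed in the first inductive step.
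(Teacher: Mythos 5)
Your outline reproduces the paper's argument in the case where $X\git G$ has only finitely many Luna strata, and there it is essentially right: Claim~1 via Lemma~\ref{lem:smallGStein}, an equivariant embedding, Proposition~\ref{prop:tubularneighbourhood}, Cartan extension plus averaging over $K$, and Murayama's $G$-ANR results; Claim~2 via Forstneri\v c's theorem; and (b) deduced from (a) by the factorisation through $X\times\C^N$. But there is a genuine gap exactly at the point you flag as ``the principal obstacle'' and then leave unresolved: the closed $G$-equivariant embedding of the Stein neighbourhood $V$ of the graph into a $G$-module requires $V$ to have only finitely many slice types (a $G$-module has only finitely many orbit types), and for a reductive $G$ a Stein $G$-manifold with finite stabilisers may have infinitely many Luna strata, so this step can simply fail as stated. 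The paper resolves this by splitting the proof in two: it first carries out your induction under the extra hypothesis that $X\git G$ has finitely many Luna strata (so that $V$ can be arranged to have finitely many conjugacy classes of isotropy groups and hence embeds), and then removes that hypothesis by a separate exhaustion-and-limit argument: exhaust the Kempf--Ness set $R$ by $K$-stable compacts $B_j$, apply the finitely-many-strata case on the relatively compact saturations $GU\supset GB_j$, cut off with invariant functions, and concatenate homotopies $f_{(j,t)}$ with errors controlled by $2^{-j-1}$ so that the limit $f_\infty$ is holomorphic (uniform convergence on compacts uses $X\simeq G\times^K R$). None of this appears in your proposal, and it is a substantial portion of the proof, not a routine reduction.

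A second, smaller gap is in Claim~2. The correspondence between sections of $E=((\pi\inv(Q_k)\cup Z)\times Y)/G$ and equivariant maps is clean only over the open stratum $S_k$; the holomorphic section produced by Forstneri\v c's theorem therefore yields holomorphic $G$-maps $h_t$ only on $\pi\inv(S_k)\cup Z$, and one must still extend them (and the homotopy) holomorphically across $\pi\inv(Q_{k-1})$. In the finite-group case this is done with the Riemann extension theorem using that $Y\to Y/G$ is finite; in the reductive case the paper restricts to Kempf--Ness tubes $G\times^L B$ with $L=G_x=K_x$ finite and runs the finite-group extension argument locally. Your phrase ``translates back into the desired holomorphic $G$-equivariant $f_k$'' elides this step, and indeed the Kempf--Ness set, which the paper uses both here and in the final exhaustion, does not appear in your proposal at all.
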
 

\begin{remark}
Note that we assume that all isotropy groups of $X$ are finite.  Suppose that we only knew that all the $G$-orbits in $X$ are closed, but we started with a $G$-equivariant continuous map $f: X\to Y$. Then using Proposition \ref{prop:induced} we would be able to reduce to the case that all isotropy groups of $X$ are finite and Theorem \ref{t:second-main} would apply.
\end{remark}

\begin{proof} [Proof of Theorem \ref{t:second-main}]
We may assume that $X$ is connected.  To begin with we assume that $X/G$ has finitely many Luna strata.

We follow the proof of Theorem \ref{t:main} to establish (a).  Define the $Q_k$ as before and consider Claim 1.  The graph $\Gamma$ of $f_{k-1}$ in $X\times Y$ is Stein.  By Lemma \ref{lem:smallGStein}, $\Gamma $ has a $G$-invariant Stein neighbourhood $V$ in $X\times Y$.  Since $X$ has only finitely many conjugacy classes of isotropy groups (which are finite), we may assume the same for $V$.  Since $V$ is smooth, this implies that there are finitely many Luna strata, hence $V$ has a closed $G$-equivariant embedding into a $G$-module $\C^N$.  By Proposition \ref{prop:tubularneighbourhood}, $V\subset \C^N$ has a $G$-invariant tubular neighbourhood $W$ with a $G$-equivariant holomorphic deformation retraction $W\to V$.  Let $U$ be a Stein $G$-neighbourhood of $\pi\inv(Q_{k-1})\cup Z$ in $X$.  Consider the holomorphic map from $\pi^{-1}(Q_{k-1}) \cup Z$ to $\C^N$ given by $x\mapsto (x, f_{k-1}(x))$.  By the Cartan extension theorem, it can be extended to a holomorphic map $U\to\C^N$ and made $G$-equivariant by averaging over $K$.  After further shrinking $U$ if necessary, the extension will take $U$ into $W$.  Then, as before, we obtain the desired extension $f_k': U\to Y$ of $f_{k-1}$.  Let $I$ denote $[0,1]$ and let $U_0$ be a $G$-stable neighbourhood of $\pi\inv(Q_{k-1})\cup Z$ with closure in $U$.  Consider the closed $G$-stable subset $A=A_1\cup A_2\cup A_3$ of $X\times I$ where
\[  A_1=X\times\{0\},\quad A_2= (\pi\inv(Q_{k-1})\cup Z)\times I,\quad A_3= \overline{U_0}\times\{1\}. \]
We have a continuous $K$-equivariant map from $A$ to $Y$ which is $f$ on $A_1$, the homotopy of $f$ and $f_{k-1}$ on $A_2$, and $f'_k$ on $A_3$.  By \cite[Theorems 6.4 and 8.8]{Murayama1983}, the map extends to a continuous $K$-equivariant map on a $K$-stable neighbourhood of $A$.  This neighbourhood contains one of the form $U'\times I$ where $U'$ is a $G$-stable neighbourhood of $\pi\inv(Q_{k-1})\cup Z$.  This gives us a homotopy of $f$ and $f'_k$ on $U'$ which extends the homotopy of $f$ and $f_{k-1}$.  This concludes the proof of Claim 1.

Now we consider Claim 2.  We will make use of the Kempf-Ness set $R$ of $X$ (see \cite{HH1994}, \cite[\S 12, Retraction Theorem]{HK1995}).  It enjoys the following properties (for any reductive group $G$ and any Stein $G$-manifold $X$).
\begin{itemize}
\item $R$ is a closed $K$-stable subset of $X$.
\item There is a $K$-equivariant deformation retraction of $X$ onto $R$.
\item If $x\in R$, then $G_x=(K_x)_\C$.  In particular, if $G_x$ is finite, then $G_x=K_x$.
\item Every closed $G$-orbit in $X$ intersects $R$ in a unique $K$-orbit and $R/K$ and $X\git G$ are naturally homeomorphic.
\item If all $G$-isotropy groups of $X$ are finite, then $X$ is $G$-homeomorphic to $G\times^K R$.
\end{itemize}
Define $E$ as before and reduce to the case that $f$ is holomorphic on a $G$-invariant neighbourhood $U''$ of $\pi\inv(Q_{k-1})\cup Z$ in $\pi\inv(Q_k)\cup Z$.  We obtain continuous $G$-maps $h_t: \pi\inv(S_k)\cup Z\to Y$, holomorphic on $U''$. Restrict the $h_t$ to $(\pi\inv(Q_k)\cup Z)\cap T_B$, where $T_B$ is a tube $G\times^LB$, $L=G_x=K_x$, $x\in R$, and $B$ is a ball in an $L$-module.  Since $L$ is finite, we are in the situation of Theorem \ref{t:main}, and Riemann's extension theorem shows that the $h_t$ extend holomorphically across $\pi\inv(Q_{k-1})\cap T_B$ along with the homotopy, completing the proof of Claim 2 and (a).

The reduction of (b) to (a) follows that in Theorem \ref{t:main}.  There is a $G$-invariant Stein neighbourhood $U$ of $Z\cup A$ on which $f$ is holomorphic.  Let $\phi: U\to\C^N$ be an equivariant closed embedding into a $G$-module. Then $M$, $j$, $\pi$, and the $h_t$ are as before.  We can approximate $\phi$ uniformly on $A$ by a holomorphic $G$-map $\psi: X\to \C^N$ which agrees with $\phi$ to order $\ell$ on $Z$ and the proof concludes as before. 

We now drop the assumption that $X/G$ has finitely many Luna strata.  Consider (a).  Let $B\subset B'\subset U\subset R$ where $B$, $B'$ are compact and $K$-stable and $U$ is $K$-stable and relatively compact. Since $GU/G$ has finitely many Luna strata, there is a homotopy $f_t$ of $f$ on $GU$ which is constantly $f$ on $Z\cap GU$,   uniformly close to $f$ on $B$ with $f_1$ holomorphic on $GU$. Using a continuous $[0,1]$-valued $G$-invariant function which is $1$ on a neighbourhood of $GB'$ and $0$ in a neighbourhood of $GU^c$, we obtain a new homotopy $f_t$ of $f$ on $X$ which equals $f$ on $Z$,  is uniformly close to $f$ on $B$, and is holomorphic on a neighbourhood of $GB'$.

Let $B_0=\emptyset\subset B_1\subset B_2\subset \cdots$ be a sequence of  $K$-stable compact subsets of $R$ whose union is $R$.  By the above, there is a homotopy $f_{(1,t)}$ of $f$ so that $f_{(1,1)}$ is holomorphic on a neighbourhood of $GB_1$ and equal to $f$ on $Z$.  By induction  there are homotopies $f_{(j,t)}$ of $f_{(j-1,1)}$ such that:
\begin{itemize}
\item $f_{(j,t)}$ is holomorphic on a neighbourhood of $GB_j$ for all $t\in[0,1]$.
\item  $d(f_{(j-1,1)}(x)-f_{(j,t)}(x))<2^{-j-1}$ for  $x\in B_{j-1}$ and $t\in[0,1]$, where $d$ is a complete metric on $Y$ inducing its topology.
\end{itemize}
For $s\in[1,\infty)$, choose an integer $j\geq 1$ such that $0\leq s-j<1$ and let $f_s(x)=f_{(j,s-j)}(x)$ for $x\in X$. 
Then $f_\infty=\lim\limits_{s\to\infty} f_s$ is the uniform limit on $B_j$ of $G$-maps which are holomorphic on a neighbourhood of $GB_j$.  Since $X=G\times^KR$, the convergence is uniform on compact subsets of $X$.  Thus $f_\infty$ is holomorphic.  Now changing the parameter space $[0,\infty]$ to $[0,1]$ gives the required homotopy.  This establishes (a) and a similar argument gives (b).
\end{proof}

In future work, we hope to be able to generalise Theorem \ref{t:second-main} to the case of arbitrary reductive group actions.  This will require new methods.  For now, we offer the following result in this direction.

\begin{theorem}   \label{t:one-slice-type}
Let $G$ be a reductive complex Lie group and $K$ a maximal compact subgroup of $G$.  Let $X$ be a Stein $G$-manifold with a single slice type and $Y$  a $G$-Oka manifold.  Then every $K$-equivariant continuous map $X\to Y$ is homotopic, through such maps, to a $G$-equivariant holomorphic map.
\end{theorem}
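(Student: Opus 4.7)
The plan is to reduce the problem to Theorem \ref{t:second-main} by restricting the action and the given map to the $H$-fixed submanifold, where $H$ is the common isotropy group of the single slice type, and then to extend the resulting homotopy back to $X$ using the bundle structure and an equivariant ANR argument.

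Under the single slice type hypothesis, every $G$-stabiliser is conjugate to a fixed reductive closed subgroup $H\subseteq G$, and every $G$-orbit in $X$ is closed. Choose $K$ so that $K\cap H$ is maximal compact in $H$, and set $N=N_G(H)$ and $N_K=K\cap N$. Then $N_K$ is maximal compact in $N$, and $N_K/(K\cap H)$ is maximal compact in the reductive complex Lie group $N/H$. By Proposition \ref{prop:induced}, the natural map
\[ G\times^{N}X^H\to X, \quad [g,x]\mapsto gx, \]
is a $G$-biholomorphism, with $H$ acting trivially on $X^H$ and $N/H$ acting freely (the $N$-stabiliser of any $x\in X^H$ is $N\cap G_x=H$). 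Moreover, $Y^H$ is $(N/H)$-Oka: every reductive closed subgroup of $N/H$ has the form $L/H$ with $H\subseteq L\subseteq N$ reductive closed, and $(Y^H)^{L/H}=Y^L$, which is Oka because $Y$ is $G$-Oka.

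Because $K\cap H$ is maximal compact in the reductive group $H$, one has $Y^{K\cap H}=Y^H$, so $f|_{X^H}:X^H\to Y^H$ is a well-defined $N_K$-equivariant continuous map. Applying Theorem \ref{t:second-main} to the Stein $(N/H)$-manifold $X^H$ (all stabilisers are trivial, hence finite), the $(N/H)$-Oka target $Y^H$, and the map $f|_{X^H}$, one obtains a homotopy $f_t^H:X^H\to Y^H$ through $N_K$-equivariant continuous maps from $f|_{X^H}$ to an $N$-equivariant holomorphic map $f_1^H$. This endpoint extends uniquely to a $G$-equivariant holomorphic map $f_1:X\to Y$ via $f_1([g,x])=g\cdot f_1^H(x)$. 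For each $t$, $f_t^H$ extends by $K$-equivariance to the closed $K$-invariant subset $K\cdot X^H\subseteq X$ via $kx\mapsto k\cdot f_t^H(x)$; this is well-defined because a short calculation using the single slice type shows that $Kx\cap X^H=N_K\cdot x$ for $x\in X^H$. Combined with $f$ at $t=0$ and $f_1$ at $t=1$, these data assemble into a $K$-equivariant continuous map on the closed $K$-invariant subset $A=(X\times\{0,1\})\cup((K\cdot X^H)\times[0,1])$ of $X\times[0,1]$.

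The final step, and the main obstacle, is to extend this $K$-equivariant continuous map from $A$ to all of $X\times[0,1]$ while preserving $K$-equivariance. This is done by invoking the $K$-ANR property of $Y$ \cite[Theorems 6.4 and 8.8]{Murayama1983} together with the tube-lemma extension argument used in the proof of Theorem \ref{t:main}. The resulting map gives the desired $K$-equivariant continuous homotopy from $f$ to the $G$-equivariant holomorphic map $f_1$. The structural reduction via Proposition \ref{prop:induced} and the transfer of the Oka condition from $Y$ to $Y^H$ are essentially formal once $K$ is chosen compatibly with $H$.
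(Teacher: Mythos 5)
Your opening reduction contains a false premise that the rest of the argument depends on. Having a single slice type does \emph{not} imply that every $G$-orbit in $X$ is closed, nor that every stabiliser is conjugate to the fixed reductive subgroup $H$: the slice representation $(W,H)$ may have a nontrivial null cone. The simplest counterexample is $G=\C^*$ acting linearly on $X=\C$: there is one closed orbit $\{0\}$, hence a single slice type, but the generic orbits are not closed and their stabilisers are trivial while $G_0=\C^*$. In such cases Proposition \ref{prop:induced} (whose hypothesis is precisely that all orbits are closed) does not apply, and the map $G\times^{N}X^H\to X$ is far from a biholomorphism (in the example $X^H=\{0\}$), so restricting $f$ to $X^H$ discards all the non-closed orbits. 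This is exactly the difficulty the paper's proof is built to handle: it keeps the fibre $F=G\times^H\N(W')$ with its null cone, views $X=\pi\inv(S)$ as a $G$-fibre bundle over the Stein stratum $S$ with structure group $L=\Aut(G\times^H\N(W'))^G$, proves a Levi decomposition $L=L_\vb\ltimes P$, reduces the structure group holomorphically to the reductive part $L_\vb$ over the Stein base, and only then obtains a $G$-equivariant \emph{holomorphic} deformation retraction of $\pi\inv(S)$ onto the set $X_S$ of closed orbits. Your argument has no substitute for this step, and Theorem \ref{t:second-main} cannot absorb it, since the stabilisers along $X$ (even along $X^H$, once non-closed orbits are present) need not be finite.

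Even in the special case where all orbits really are closed, your final step is not justified. You need a $K$-equivariant map on $X\times[0,1]$ with prescribed values on $A=(X\times\{0,1\})\cup\bigl((K\cdot X^H)\times[0,1]\bigr)$, i.e.\ a $K$-homotopy from $f$ to $f_1$ on all of $X$ extending a given one on the proper subset $K\cdot X^H$. Murayama's $K$-ANR property only yields extensions to a neighbourhood of $A$ (that is how it is used in the proof of Theorem \ref{t:main}), not to all of $X\times[0,1]$; since a $K$-equivariant map on $X$ is not determined by its restriction to $K\cdot X^H$, there are genuine obstruction-theoretic issues to such an extension, and you give no argument that they vanish. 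The paper avoids this by working at the level of mapping spaces: after retracting to $X_S$, it reduces the principal $N$-bundle $X_S^H\to S$ to the maximal compact $N_0$, uses the $K$-equivariant retraction of $G/H$ onto $K/L$ to produce a $K$-equivariant deformation retraction of $\mathscr C^K(X_S,Y)$ onto a closed subset of $\mathscr C^G(X_S,Y)$, and then identifies $\mathscr C^G(X_S,Y)$ (respectively $\O^G(X_S,Y)$) with continuous (respectively holomorphic) sections of the bundle $(X_S^H\times Y^H)/N$ over $S$ with Oka fibre $Y^H$, where the ordinary Oka principle applies. So both the geometric reduction and the passage from $K$- to $G$-equivariance require substantially more than your proposal provides.
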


The theorem is an immediate consequence of the next proposition.

\begin{proposition}   \label{p:stratum-stein}
Let $G$ be a reductive group and $K$ a maximal compact subgroup of $G$.  Let $X$ be a Stein $G$-manifold and $Y$ a $G$-Oka manifold.  Let $S$ be a Luna stratum of $X\git G$ which is Stein.  Then every $K$-equivariant continuous map $\pi^{-1}(S)\to Y$ is homotopic, through such maps, to a $G$-equivariant holomorphic map.
\end{proposition}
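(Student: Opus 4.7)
The strategy is to reduce to Theorem \ref{t:second-main} by passing to the $H$-fixed submanifold $W:=\pi^{-1}(S)^H_0$, where $H$ is the common isotropy type of the stratum $S$.

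Since $S$ is a single Luna stratum, all $G$-orbits in $\pi^{-1}(S)$ are closed with isotropy conjugate to a fixed reductive subgroup $H\subset G$. Proposition \ref{prop:induced}, applied to the Stein $G$-manifold $\pi^{-1}(S)$, therefore yields a $G$-biholomorphism $\pi^{-1}(S)\cong G\times^M W$, where $M:=N_G(H)$. The quotient $N:=M/H$ is reductive and acts freely on $W$ with $W/N=S$, so $W\to S$ is a holomorphic principal $N$-bundle over the Stein base $S$; since $N$ is reductive, $W$ is Stein. Moreover, $Y^H$ carries an $N$-action (the $M$-action on $Y$ preserves $Y^H$ and $H$ acts trivially on it), and $Y^H$ is $N$-Oka: a reductive closed subgroup of $N$ has the form $L/H$ for a reductive closed subgroup $L\subset M$ containing $H$, and $(Y^H)^{L/H}=Y^L$ is Oka by the $G$-Oka hypothesis.

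Assume $K$ is chosen so that $K_H:=K\cap H$ is a maximal compact subgroup of $H$; then $K_M:=K\cap M$ is maximal compact in $M$, and $K_N:=K_M/K_H$ is maximal compact in $N$. The restriction $\tilde f:=f|_W:W\to Y$ is $K_M$-equivariant, takes values in $Y^{K_H}=Y^H$ (by Zariski density of $K_H$ in $H$ together with holomorphicity of the $H$-action on $Y$), and is thus $K_N$-equivariant as a map $W\to Y^H$. Since $W$ is a Stein $N$-manifold with trivial (hence finite) $N$-isotropies and $Y^H$ is $N$-Oka, Theorem \ref{t:second-main} provides a homotopy $\tilde f_t:W\to Y^H$ through $K_N$-equivariant continuous maps from $\tilde f_0=\tilde f$ to an $N$-equivariant holomorphic map $\tilde f_1$. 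Viewing $\tilde f_1$ as an $M$-equivariant map (through the quotient $M\to N$), we obtain a $G$-equivariant holomorphic map $f_1:\pi^{-1}(S)\to Y$ defined by $f_1([g,w]):=g\tilde f_1(w)$.

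The main obstacle is the last step: upgrading $\tilde f_t$ to a $K$-equivariant continuous homotopy on $\pi^{-1}(S)$ from $f$ to $f_1$. The naive extension $f_t([g,w]):=g\tilde f_t(w)$ fails for $t<1$ because $\tilde f_t$ is only $K_N$-equivariant, not $M$-equivariant. To resolve this, I would invoke the Kempf-Ness set $R\subset\pi^{-1}(S)$: $R$ is closed and $K$-stable with a $K$-equivariant deformation retraction $\rho:\pi^{-1}(S)\to R$, and on the single Luna stratum all $K$-stabilisers on $R$ are $K$-conjugate to $K_H$, yielding a $K$-equivariant isomorphism $R\cong K\times^{K_M} R^H$ with $R^H=R\cap W$. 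Under this isomorphism, $K$-equivariant continuous maps $R\to Y$ correspond bijectively to $K_N$-equivariant continuous maps $R^H\to Y^H$. Restricting $\tilde f_t$ to $R^H$ therefore yields a $K$-equivariant continuous homotopy $f_t^R:R\to Y$ from $f|_R$ to $f_1|_R$. Concatenating $f_t^R\circ\rho$ with the two $K$-equivariant homotopies $f\simeq f\circ\rho$ and $f_1\circ\rho\simeq f_1$ coming from the deformation retraction produces the desired $K$-equivariant continuous homotopy on $\pi^{-1}(S)$ from $f$ to $f_1$.
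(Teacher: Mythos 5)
Your argument breaks down at its first step. It is not true that all $G$-orbits in $\pi^{-1}(S)$ are closed with isotropy conjugate to $H$: the stratum $S$ consists of the images of the \emph{closed} orbits of isotropy type $(H)$, but $\pi^{-1}(S)$ is the union of the full $\pi$-fibres over $S$, and by the slice theorem each such fibre is $G\times^H\N(W')$, where $(W^H\oplus W',H)$ is the slice representation of the stratum and $\N(W')$ is the nullcone. Unless $\N(W')=\{0\}$, these fibres contain non-closed orbits whose isotropy groups are proper subgroups of (conjugates of) $H$, and $\pi^{-1}(S)$ is then not even smooth; in the single-slice-type situation of Theorem \ref{t:one-slice-type}, which is the intended application, one has $\O(W')^H=\C$, so $\N(W')=W'$ and the non-closed orbits fill each fibre apart from the one closed orbit. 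Hence Proposition \ref{prop:induced} does not apply to $\pi^{-1}(S)$. What your fixed-point construction actually produces is $G\times^{N_G(H)}\pi^{-1}(S)^H_0\cong X_S$, the set of closed orbits in $\pi^{-1}(S)$ (note $(G\times^H\N(W'))^H=N_G(H)\times^H\{0\}$, since $\N(W')^H\subseteq (W')^H=\{0\}$), which is in general a proper subset of $\pi^{-1}(S)$. Consequently the holomorphic $G$-map $f_1$ you build, and the homotopy you assemble from the Kempf--Ness retraction, live only on $X_S$ and never get extended to $\pi^{-1}(S)$, so the statement is not proved.

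Bridging exactly this gap is what most of the paper's proof is about: the bundle $\pi^{-1}(S)\to S$ has structure group $L=\Aut(G\times^H\N(W'))^G$, one shows that the reductive group $\Aut_\vb(G\times^HW')^G$ is a Levi component of $L$, and, using that $S$ is Stein and the unipotent radical of $L$ is contractible, one reduces the structure group holomorphically to this Levi component; this yields a $G$-equivariant \emph{holomorphic} deformation retraction of $\pi^{-1}(S)$ onto $X_S$ (fibrewise multiplication by $t\in[0,1]$), after which maps on $\pi^{-1}(S)$ may legitimately be traded for maps on $X_S$. For the remaining step the paper does not invoke Theorem \ref{t:second-main}: it retracts $\mathscr C^K(X_S,Y)$ $K$-equivariantly into $\mathscr C^G(X_S,Y)$ (by retracting the fibre $G/H$ onto $K/(K\cap H)$ after reducing the structure group of $X_S^H\to S$ to a maximal compact subgroup) and identifies $\mathscr C^G(X_S,Y)$ with continuous sections of the $Y^H$-bundle $(X_S^H\times Y^H)/N$ over the Stein base $S$, where the ordinary Oka principle applies because $Y^H$ is Oka. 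Your alternative route through Theorem \ref{t:second-main} and the Kempf--Ness set for that part is plausible but itself rests on unverified claims (Steinness of $W$, the identification $R\cong K\times^{K\cap N_G(H)}R^H$ and the induced bijection of equivariant mapping spaces); in any case, without something like the holomorphic retraction of $\pi^{-1}(S)$ onto $X_S$ it cannot yield the required map on all of $\pi^{-1}(S)$.
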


The proof of the proposition is by a series of lemmas.  Let $(W,H)$ be the slice representation associated to $S$.  Here, $H$ is a reductive closed subgroup of $G$.  Let $W=W^H\oplus W'$ be an $H$-module decomposition. Then the fibres of $\pi\inv(S)\to S$ are isomorphic to $F:=G\times^H\N(W')$. Thus the $G$-fibre bundle $\pi\inv(S)\to S$ has structure group $L:=\Aut(G\times^H\N(W'))^G$ (which is a linear algebraic group) and associated principal $L$-bundle $\L$.   We know from \cite[Corollary 6.31]{KLS2017} that $\Aut_\vb(G\times^HW')^G$ ($\vb$ stands for $G$-vector bundle automorphisms) is reductive. Let $L_\vb$ denote this group. (Note that this is not the same $L_\vb$ as in \cite{KLS2017}.)

\begin{lemma}
We have $L_\vb\subset L$ and $L_\vb$ is a Levi component of $L$, that is, $L= L_\vb\ltimes P$, where $P$ is the unipotent radical of $L$.
\end{lemma}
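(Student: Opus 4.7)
The plan is to construct an embedding $L_\vb\hookrightarrow L$ by restriction to the null cone, and then identify $L_\vb$ as a Levi complement by analysing the natural $\mathbb C^*$-grading on the coordinate ring of $G\times^H\N(W')$.

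For the inclusion, any $G$-equivariant vector bundle automorphism $\phi$ of $G\times^HW'$ is fibrewise a linear $H$-equivariant automorphism of $W'$, which preserves $\N(W')\subset W'$. Together with the induced map of the base $G/H$, this produces a $G$-equivariant automorphism of $G\times^H\N(W')$, giving a homomorphism $r\colon L_\vb\to L$. To see that $r$ is injective, note that the zero section $G/H\subset G\times^H\N(W')$ is preserved by $r(\phi)$ and recovers the base component of the automorphism, while the fibrewise part of $\phi$ is determined on each fibre by its restriction to $\N(W')$, using that $(W')^H=0$ forces $\N(W')$ to linearly span $W'$.

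Next, scalar multiplication $t\cdot[g,v]=[g,tv]$ on $\N(W')$ defines a $\mathbb C^*$-action on $G\times^H\N(W')$ commuting with the $G$-action, and hence induces a non-negative grading $\O(G\times^H\N(W'))=\bigoplus_{d\geq 0}\O_d$ with $\O_0=\O(G/H)$. The group $L$ preserves the ideal of the zero section $G\times^H\{0\}$ and therefore preserves the induced filtration. Let $P\subseteq L$ be the kernel of the induced action on the associated graded algebra; standard arguments identify $P$ with a closed normal unipotent subgroup of $L$. Since $r(L_\vb)$ acts by grading-preserving (not merely filtration-preserving) automorphisms, one has $r(L_\vb)\cap P=\{1\}$.

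To establish $L=r(L_\vb)\cdot P$ and conclude the Levi decomposition $L=L_\vb\ltimes P$, the key observation is that the degree-one part of $\O(\N(W'))$ recovers $(W')^*$ as an $H$-module, so any grading-preserving $G$-equivariant automorphism of $G\times^H\N(W')$ lifts to a $G$-equivariant vector bundle automorphism of the ambient slice $G\times^HW'$. Combined with the reductivity of $L_\vb$ from \cite[Corollary 6.31]{KLS2017}, this identifies $r(L_\vb)$ as a maximal reductive subgroup of $L$, hence a Levi factor. The main obstacle will be this final lifting step, extending a graded equivariant automorphism of the singular null-cone bundle to a vector bundle automorphism of $G\times^HW'$: this amounts to an equivariant linearisation statement and will likely require a Luna-type analysis on $W'$ together with an invariant-theoretic identification of the degree-one piece of $\O(\N(W'))$ with $(W')^*$ in the presence of the twist by $G\times^H$.
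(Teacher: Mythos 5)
There are two genuine gaps. First, your injectivity argument for the restriction map $r\colon L_\vb\to L$ fails: it is not true that $(W')^H=0$ forces $\N(W')$ to span $W'$. If, say, $H$ acts on $W'$ with finite image, then $\N(W')=\{0\}$ as a set, and restriction to the reduced null-cone bundle genuinely kills part of $L_\vb$ (e.g.\ $G=H=\Z_2$ acting on $W'=\C$ by $\pm1$: $L_\vb=\C^*$, while the reduced fibre is a point). So at the level of reduced varieties the inclusion $L_\vb\subset L$ cannot be proved this way at all; one must use the natural complex-subspace structure of $\N(W')$ (the ideal generated by the positive-degree invariants), whose Zariski tangent space at $0$ is all of $W'$ precisely because $((W')^*)^H=0$. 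This is exactly what the paper exploits: it defines a homomorphism $\delta\colon L\to L_\vb$ by taking the normal derivative along the closed orbit $G\times^H\{0\}$, and notes that $\delta$ composed with restriction is the identity, which yields the embedding $L_\vb\subset L$ with no spanning claim. Your "degree-one part of $\O(\N(W'))$ equals $(W')^*$" observation is this same $\delta$ in disguise, and it too is only correct for the non-reduced structure, which your write-up never invokes.

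Second, the half of the lemma that makes $L_\vb$ a \emph{full} Levi factor is left open by your own account: you need $L=r(L_\vb)\cdot P$, i.e.\ that every element of $L$ agrees modulo $P$ with a vector bundle automorphism, and the lifting of a graded automorphism of the null-cone bundle to a $G$-vector bundle automorphism of $G\times^HW'$ is exactly the step you defer ("will likely require a Luna-type analysis"); you would moreover still have to show that your $P$ (the kernel of the action on the associated graded) is unipotent and is the whole unipotent radical, i.e.\ that $L/P$ is reductive, which is essentially the assertion being proved. The paper sidesteps this surjectivity/lifting problem entirely: it takes an abstract Levi decomposition $L=M\ltimes P$ and shows that $\delta|_M$ is injective, because $M\cap\Ker\delta$ is a reductive normal subgroup of $M$ that fixes $G/H$ and acts trivially on the normal directions $W'$, hence is trivial by Luna's slice theorem; since $\delta$ restricts to the identity on $L_\vb$, it follows that $M\cong L_\vb$ and $L=L_\vb\ltimes P$. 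To repair your proof you would either have to carry out the equivariant lifting statement in full (on the non-reduced null-cone bundle), or switch to the paper's shorter route via $\delta$ and an abstract Levi factor.
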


\begin{proof}
There is a homomorphism $\delta: L\to L_\vb$, where $\delta(\ell)$ for $\ell\in L$ is the normal derivative of $\ell$ along the closed orbit $G\times^H\{0\}$.  Here we use the fact that the Zariski tangent space of $\N(W')$ at $0$ is $W'$.  Now $ L_\vb$ acts by automorphisms of the invariants $\O_\textrm{alg}(G\times^HW')^G$, preserving those vanishing on $G/H$, whose zero set is $F$.  Hence $L_\vb\subset L$ and  
\[ L_\vb\to L\xrightarrow{\delta}L_\vb \] 
is the identity.

Let $M$ be a Levi factor of $L$ so that $L=M\ltimes P$.  Let $C$ denote the kernel of  $\delta: L\to L_\vb$.  Since $C$ is normal in $L$, $M_0:=C\cap M$ is normal in $M$, hence reductive.  Since $\delta(M_0)$ is the identity, $M_0$ fixes $G/H$ and acts trivially on $W'$, the Zariski tangent space of $\N(W')$ at $0\in W'$.  By Luna's slice theorem, $M_0$ acts trivially on $G\times^HW'$, so $M_0=\{e\}$.  Thus $\delta: M\to L_\vb$ is an isomorphism and $L= L_\vb\ltimes P$.
\end{proof}

Let $X_S$ denote the set of closed $G$-orbits in $\pi\inv(S)$.  Then $X_S\to S$ is a $G$-fibre bundle over $S$ with fibre $G/H$ and structure group $\Aut(G/H)^G\simeq N=N_G(H)/H$ acting on the right on the fibre.

\begin{corollary}
We can reduce the structure group of $\L$ to $L_\vb$ and there is a deformation retraction of $\pi\inv(S)\to X_S$ by $G$-equivariant holomorphic maps.
\end{corollary}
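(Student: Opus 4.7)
The plan is to prove both assertions together in two linked steps: first construct a holomorphic reduction of the principal $L$-bundle $\L$ to the Levi factor $L_\vb$, and then use fibrewise scalar multiplication on the null cone $\N(W')$ to produce a $G$-equivariant holomorphic deformation retraction onto the zero section $X_S$. The reason the two parts are inseparable will be explained at the end.

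For the reduction of structure group, observe that giving such a reduction is the same as giving a holomorphic section of the associated fibre bundle $\L/L_\vb\to S$, whose fibre is $L/L_\vb\cong P$. Since $P$ is a unipotent complex algebraic group, it is isomorphic as a complex variety to an affine space $\C^k$, hence contractible and Oka. As $S$ is Stein, Oka theory for sections of holomorphic fibre bundles with Oka fibre (\cite[Theorem 5.4.4]{Forstneric2017}), together with the existence of a continuous section coming from contractibility of the fibre, produces a holomorphic section. Call the resulting principal $L_\vb$-bundle $\L_\vb$; then $\pi\inv(S)$ is $G$-biholomorphic to $\L_\vb\times^{L_\vb}(G\times^H\N(W'))$.

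For the retraction, define $\mu_t:G\times^H\N(W')\to G\times^H\N(W')$ for $t\in[0,1]$ by $\mu_t[g,w]=[g,tw]$. This is well defined because $\N(W')$ is conic, i.e., stable under scalar multiplication (any positive-degree homogeneous $H$-invariant vanishing at $w$ also vanishes at $tw$), and it is manifestly $G$-equivariant and holomorphic. The crucial point is that $\mu_t$ commutes with every $\phi\in L_\vb$: by definition $L_\vb$ acts by $G$-equivariant vector bundle automorphisms of $G\times^HW'$, so fibrewise $\phi$ is $\C$-linear and therefore commutes with scalar multiplication. Hence $\mu_t$ descends to a holomorphic $G$-equivariant self-map $h_t:\pi\inv(S)\to\pi\inv(S)$ satisfying $h_1=\id$, $h_0(\pi\inv(S))\subset X_S$, and $h_t$ fixing $X_S$ pointwise for all $t$.

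The main obstacle is recognising why the Levi reduction is indispensable: a generic element of the full structure group $L$ only preserves the algebraic structure of $G\times^H\N(W')$, not its linear structure, so without first reducing to $L_\vb$ the map $\mu_t$ would not commute with $L$-transition functions and would fail to globalise. Once the reduction is in hand, linearity of the fibrewise $L_\vb$-action makes the radial contraction entirely formal; and the reduction itself is routine because $L/L_\vb\cong P$ is an affine space and $S$ is Stein.
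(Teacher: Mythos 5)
Your proposal is correct and follows essentially the same route as the paper: reduce the structure group by producing a holomorphic section of $\L/L_\vb\to S$ (using contractibility of the fibre $P$ and Steinness of $S$ to pass from a continuous to a holomorphic section), then retract via fibrewise scalar multiplication on $G\times^H\N(W')\subset G\times^HW'$. Your explicit check that scalar multiplication commutes with the fibrewise linear $L_\vb$-action, so that the contraction globalises over the reduced bundle, is exactly the point the paper leaves implicit.
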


\begin{proof}
The fibre of $\L/L_\vb$ is isomorphic to $P$, which is isomorphic to its Lie algebra, hence contractible.  There is a continuous section $\sigma$ of $\L/L_\vb\to S$. Since $S$ is Stein, we may assume that $\sigma$ is holomorphic.  Then for $x\in S$, $\sigma(x)$ is the choice of an $L_\vb$-orbit  in the fibre of $\L$ over $x$. This is a holomorphic principal $L_\vb$-bundle $E$  and clearly $\L\simeq E\times^{L_\vb}L$, so that $E$ is a holomorphic reduction of the structure group of $\L$ to $L_\vb$. Then we have a holomorphic $G$-equivariant isomorphism of $\pi\inv(S)\to S$ to the associated bundle $\F:=E\times^{L_\vb}F\to S$. On the  latter bundle we have a $G$-equivariant deformation retraction by holomorphic maps given by multiplication by $t\in[0,1]$ on $G\times^H\N(W')\subset G\times^HW'$. 
\end{proof}

Using our $G$-equivariant holomorphic deformation retraction we have a deformation retraction of $\mathscr C^K(\pi\inv(S),Y)$ to $\mathscr C^K(X_S,Y)$ and of $\O^K(\pi\inv(S),Y)$ to $\O^K(X_S,Y)$.  Recall that $X_S$ is just what we get in $\pi\inv(S)$ by replacing the fibre $F$ by its closed orbit $G\times^H\{0\}\simeq G/H$.  Let $X_S^H$ be shorthand for $(X_S)^H$.  Note that $N$ acts on the left on $X_S^H$ and $X_S^H\to S$ is a principal $N$-bundle. Moreover,  $X_S$ is canonically isomorphic to $(G/H)\times^N X_S^H$. 

\begin{lemma}
There is a continuous $K$-equivariant deformation retraction of $\mathscr C^K(X_S,Y)$ to a closed subset of $\mathscr C^G(X_S,Y)$.
\end{lemma}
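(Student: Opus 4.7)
The plan is to use the Kempf--Ness set $R_S := R \cap X_S$ as the intermediate object. Although $X_S$ may fail to be Stein, $R_S$ inherits the standard Kempf--Ness properties from $R \subset X$: it is closed and $K$-stable, meets each closed $G$-orbit of $X_S$ in a unique $K$-orbit, and the $K$-equivariant deformation retraction of $X$ onto $R$ restricts to one of $X_S$ onto $R_S$. Moreover, for every $r \in R_S$, $G_r = (K_r)_\C$; since $X_S$ has a single slice type, $G_r$ is conjugate to $H$ and $K_r$ to $K_H := K \cap H$. The key fact is the following extension principle: every $K$-equivariant continuous map $\phi : R_S \to Y$ extends uniquely to a $G$-equivariant continuous map $\widetilde\phi : X_S \to Y$ via $\widetilde\phi(gr) := g \cdot \phi(r)$. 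Well-definedness uses $\phi(r) \in Y^{K_r} = Y^{G_r}$: the inclusion $\phi(r) \in Y^{K_r}$ is immediate from $K$-equivariance, and the equality holds because Bochner linearization makes the $G_r$-action on $Y$ locally linear near $\phi(r)$, and a $K_r$-invariant vector in a $G_r$-module is automatically $G_r$-invariant (since $\mathfrak g_r = \mathfrak k_r \otimes \C$). Continuity of $\widetilde\phi$ is routine in local trivialisations of $X_S \to S$. Consequently $\Psi(f) := \widetilde{f|_{R_S}}$ is a continuous retraction of $\mathscr C^K(X_S, Y)$ onto $\mathscr C^G(X_S, Y)$, which is closed in the compact--open topology.

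For the deformation, I would use polar coordinates on each $G$-orbit. Choose a $K$-invariant real-analytic Riemannian metric on $X_S$ to obtain a canonical $K_r$-invariant complement $\mathfrak q_r \subset \mathfrak k$ of $\mathfrak k_r$ that varies continuously with $r$. Polar decomposition (a diffeomorphism because $G_r$ is reductive with maximal compact $K_r$) gives a $K$-equivariant identification
\[ K \times^{K_r} \mathfrak q_r \;\cong\; G/G_r = G \cdot r, \qquad (k, \xi) \mapsto k \exp(i\xi) \cdot r, \]
under which the retraction to $K \cdot r \subset R_S$ becomes $(k, \xi) \mapsto (k, 0)$. Define
\[ H_t(f)(x) := k \exp(t i\xi) \cdot f\bigl(\exp((1-t) i\xi)\, r\bigr) \quad\text{for } x = k \exp(i\xi)\, r. \]
Routine checks give: $H_0(f) = f$ (by $K$-equivariance $f(kz) = k f(z)$); $H_1(f) = \Psi(f)$ (immediate); $H_t$ takes values in $\mathscr C^K(X_S, Y)$ since the outer $k$ factors out; and the $K_r$-ambiguity $(k, \xi) \sim (k k_0, \operatorname{Ad}(k_0^{-1}) \xi)$, $k_0 \in K_r$, is absorbed by $K$-equivariance of $f$ applied to $k_0 \in K_r \subset K$, using $k_0 \cdot r = r$. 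For a $G$-equivariant $f$, the formula collapses via the one-parameter group identity $\exp(t i\xi) \exp((1-t) i\xi) = \exp(i\xi)$ to $H_t(f)(x) = k \exp(i\xi) f(r) = f(x)$, so the retraction is strong.

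The main technical obstacle is arranging the polar decomposition to depend continuously on $r$ across all of $R_S$, and globally on $X_S$, so that $H_t(f)$ is jointly continuous in $(t, f, x)$. This is handled by the $K$-invariant Riemannian metric on $X_S$, which furnishes a global continuous (indeed real-analytic) polar parameterisation. With this in place, the formula above yields the desired continuous $K$-equivariant strong deformation retraction of $\mathscr C^K(X_S, Y)$ onto the closed subset $\mathscr C^G(X_S, Y)$.
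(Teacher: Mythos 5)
Your overall strategy---retract each orbit $G\cdot r\cong G/G_r$ onto its compact form $K\cdot r$ via the Mostow/Cartan decomposition and exploit the bijection between $K$-maps on the resulting compact core and $G$-maps on $X_S$---is in essence the same idea as the paper's proof, and your fibrewise formula $H_t$ with its checks (independence of the $K_r$-ambiguity, $K$-equivariance, $H_0(f)=f$, $H_1(f)=\Psi(f)$, $G$-equivariant maps fixed) is correct as far as it goes. The genuine gap is exactly at the point you defer: joint continuity of $H_t(f)$ in $x$, that is, continuity \emph{across orbits} of the decomposition $x=k\exp(i\xi)r$ with $r\in R_S$, $\xi\in\mathfrak q_r$. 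Orbitwise this is Mostow's theorem, but what you need is that $X_S$ is $K$-homeomorphic to the corresponding quotient of the parameter space $\{(k,\xi,r)\}$, equivalently that local continuous choices of $(k,\xi,r)$ exist near every point of $X_S$. A $K$-invariant Riemannian metric on $X_S$ does not furnish this: it has no bearing on the group-theoretic decomposition, and nothing in your argument shows, for instance, that $R_S\to S$ is locally trivial or that the map $(r,\xi)\mapsto\exp(i\xi)r$ is proper, which is the kind of statement required. This is precisely what the paper's proof is built to secure: it reduces the structure group of the principal $N$-bundle $X_S^H\to S$ to its maximal compact $N_0$ (possible because $N/N_0$ is contractible and $S$ is Stein), writes $X_S$ as the associated bundle $(G/H)\times^{N_0}P_0$, and then applies the fibrewise, $N_0$-equivariant retraction of $G/H$ onto $K/L$, whose globalisation through the associated-bundle construction is continuous for free. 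Your argument would be complete if you either proved such a local triviality/properness statement for the Kempf--Ness parameterisation of $X_S$, or routed the construction through the bundle structure as the paper does.

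A minor point: your justification of $Y^{K_r}=Y^{G_r}$ via Bochner linearisation of the $G_r$-action at $\phi(r)$ is circular, since one can only linearise $G_r$ at a point already known to be fixed by $G_r$. The correct argument is the identity principle applied to the holomorphic orbit map $G_r\to Y$, $g\mapsto g\cdot\phi(r)$, which is constant on $K_r$ and hence on $G_r=K_r\exp(i\mathfrak k_r)$. The fact itself is true and is used implicitly in the paper in the form $Y^L=Y^H$.
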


\begin{proof}
Let $L\subset K$  be a maximal compact subgroup of $H\subset G$, respectively.  The maximal compact subgroup of $N=N_G(H)/H$ is $N_0=N_K(L)/L$.  Since $N/N_0$ is contractible, there is a reduction of the structure group of $X_S^H\to S$ to $N_0$.  Thus there is a principal $N_0$-bundle $P_0$ (with $N_0$ acting on the left) such that $X_S^H\simeq N \times^{N_0} P_0$. 

Let $\mathfrak k$ be the Lie algebra of $K$ and $\mathfrak l$ that of $L$. Then $G\simeq K\times \exp(i\mathfrak k)$ and $H\simeq L\times \exp(i\mathfrak l)$. Since $\mathfrak l\subset\mathfrak k$,   the obvious $K$-deformation  retraction of $G$ to $K$ sends $H$ to $L$. Thus we have a $K$-equivariant deformation retraction of $G/H$ to $K/L$, where $N_0$ acts on $K/L$.  Hence we have a $K$-equivariant deformation retraction of the $G/H$-bundle $X_S\to S$ to the $K/L$-bundle $X_{S,0}:=K/L\times^{N_0}P_0\to S$. Then starting with $f\in \mathscr C^K(X_S,Y)$, we have a $K$-equivariant deformation retraction of $f$ to an element $f'\in \mathscr C^K(X_{S,0},Y)$. This is the same thing as an $N_0$-equivariant map of $P_0$ to $Y^L=Y^H$.  But given such an $N_0$-equivariant map, it naturally gives rise to an $N$-equivariant map of $N\times^{N_0}P_0=X_S^H\to Y^H$, which is the same thing as an element $f_0\in \mathscr C^G(X_S,Y)$. It is easy to see that our deformation retraction of $\mathscr C^K(X_S,Y)$ to $\mathscr C^K(X_{S,0},Y)$ sends $f_0$ back to $f'$. Thus we have the desired $K$-equivariant deformation retraction.
\end{proof}

Let $E$ denote the $Y^H$-bundle $(X_S^H  \times Y^H)/N$ over $X_S^H/N=S$.  Let $\Gamma_{\!\mathscr C}(E)$ (resp.\ $\Gamma_{\!\O}(E)$) denote the space of continuous (resp.\ holomorphic) sections of $E$.  We give $\Gamma_{\!\mathscr C}(E)$ and its closed subspace $\Gamma_{\!\O}(E)$ the usual topology, and similarly for $\mathscr C^G(X_S,Y)$ and $\O^G(X_S,Y)$.

\begin{lemma}
There is a homeomorphism between $\mathscr C^G(X_S,Y)$ and $\Gamma_{\!\mathscr C}(E)$ which sends $\O^G(X_S,Y)$ onto $\Gamma_{\!\O}(E)$.
\end{lemma}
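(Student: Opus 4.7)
The plan is to realize the claimed homeomorphism as a composition of two natural identifications: first a restriction-extension bijection
\[ \mathscr C^G(X_S,Y)\;\longleftrightarrow\;\mathscr C^N(X_S^H,Y^H), \]
and then the standard bijection between $N$-equivariant maps out of a principal $N$-bundle and sections of the associated fibre bundle
\[ \mathscr C^N(X_S^H,Y^H)\;\longleftrightarrow\;\Gamma_{\!\mathscr C}(E). \]
Both identifications should be shown to be homeomorphisms for the compact-open (resp.\ compact-open) topology and, moreover, both should restrict to bijections on the subspaces of holomorphic maps and holomorphic sections.

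For the first step, the forward map is restriction $f\mapsto f|_{X_S^H}$. Since $H$ stabilises every point of $X_S^H$, $G$-equivariance of $f$ forces $f(X_S^H)\subset Y^H$, and the restriction is automatically $N_G(H)$-equivariant, hence $N$-equivariant as $H$ acts trivially on both source and target. For the inverse, use the canonical isomorphism $X_S\simeq (G/H)\times^N X_S^H$ established above: given $g\in\mathscr C^N(X_S^H,Y^H)$, define $\tilde g:X_S\to Y$ by $\tilde g(\gamma x)=\gamma\cdot g(x)$ for $\gamma\in G$ and $x\in X_S^H$. Well-definedness is the only point to check: if $\gamma_1 x_1=\gamma_2 x_2$ with $x_i\in X_S^H$, then $\gamma_2^{-1}\gamma_1$ maps $X_S^H$ to $X_S^H$, so lies in $N_G(H)$; writing $n$ for its class in $N$, the equation $n\cdot x_1=x_2$ combined with $N$-equivariance of $g$ gives $g(x_2)=\gamma_2^{-1}\gamma_1\cdot g(x_1)$, i.e.\ $\gamma_1 g(x_1)=\gamma_2 g(x_2)$. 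Restriction and extension are mutually inverse by construction.

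For the second step, the standard correspondence sends $g\in\mathscr C^N(X_S^H,Y^H)$ to the section $s_g:S\to E$ defined on the principal $N$-bundle $X_S^H\to S$ by $s_g(\pi(x))=[x,g(x)]$; the inverse sends a section $s$ to the unique $N$-equivariant lift $g:X_S^H\to Y^H$ of $s$. Continuity of both assignments in the compact-open topology is routine, using that $X_S^H\to S$ is a locally trivial principal bundle. Composing the two bijections yields the desired homeomorphism $\Phi:\mathscr C^G(X_S,Y)\to\Gamma_{\!\mathscr C}(E)$.

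Finally, I need to show $\Phi$ restricts to a bijection $\mathscr O^G(X_S,Y)\to\Gamma_{\!\O}(E)$. The restriction of a holomorphic $G$-equivariant map to the closed complex submanifold $X_S^H$ is holomorphic, and the associated section $s_g$ of $E$ is then holomorphic because $X_S^H\to S$ is a holomorphic principal bundle and $[x,g(x)]$ depends holomorphically on $x$. Conversely, if $s\in\Gamma_{\!\O}(E)$, the lift $g$ is holomorphic (locally on $S$ the bundle is trivial and $g$ is read off as a holomorphic function), and then $\tilde g:X_S\to Y$ is holomorphic because the extension formula $\tilde g(\gamma x)=\gamma g(x)$ expresses it as the composition of the holomorphic $G$-action with $g$, descending through the holomorphic quotient $G\times X_S^H\to (G/H)\times^N X_S^H\simeq X_S$. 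The main thing to keep honest is this last descent: one checks that the map $G\times X_S^H\to Y$, $(\gamma,x)\mapsto \gamma g(x)$, is invariant under the free $H\times N$-action whose quotient is $X_S$, which is exactly what the well-definedness argument above verifies. This completes the proof.
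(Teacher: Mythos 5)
Your proof is correct and takes essentially the same route as the paper: restriction to $X_S^H$ identifies $\mathscr C^G(X_S,Y)$ with $\mathscr C^N(X_S^H,Y^H)$, and the standard principal-bundle correspondence (graph/quotient in one direction, local triviality in the other) identifies the latter with $\Gamma_{\!\mathscr C}(E)$, with holomorphy preserved in both directions. The only cosmetic points are that well-definedness of the extension really rests on the stabiliser of each point of $X_S^H$ being exactly $H$ (equivalently, on the canonical isomorphism $X_S\simeq (G/H)\times^N X_S^H$ that you invoke), not on $\gamma_2^{-1}\gamma_1$ preserving all of $X_S^H$, and the free action on $G\times X_S^H$ whose quotient is $X_S$ is by $N_G(H)$ rather than by a product $H\times N$.
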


\begin{proof}
Let $f\in \mathscr C^G(X_S,Y)$. Then $f$ is determined by its restriction $f'$ to $X_S^H$, where $f'\in C^N(X_S^H,Y^H)$ (and vice versa). Consider the map $X_S^H\to\Gamma(f')$, where $\Gamma(f')\subset X_S^H\times Y^H$ is the graph of $f'$. Since $\Gamma(f')$ is $N$-stable, quotienting by $N$ we obtain  a continuous section of $E$. Conversely, let $\sigma\in\Gamma_{\!\mathscr C}(E)$. Since $X_S^H\to S$ and $E$ are locally trivial, locally over $S$, $\sigma$ corresponds to an element of $\mathscr C^N(X_S^H,Y^H)$. But these local equivariant maps are unique and patch together to form an element of $\mathscr C^N(X_S^H,Y^H)$ which induces $\sigma$. 
\end{proof}

\begin{proof}[Proof of Proposition \ref{p:stratum-stein}]
It is enough to show that every element of $\mathscr C^G(X_S,Y)$ is homotopic to an element in $\O^G(X^S,Y)$.  But $\mathscr C^G(X_S,Y)\simeq \Gamma_{\!\mathscr C}(E)$ and since $Y^H$ is Oka, any element of $\Gamma_{\!\mathscr C}(E)$ is homotopic to a holomorphic section, which is the same thing as an element of $\O^G(X_S,Y)$.  
\end{proof}

\section{Equivariant Runge property}  \label{s:equivar-Runge}

\noindent
Let $G$ be a reductive complex Lie group.  We say that a $G$-manifold $Y$ is $G$-\emph{Runge} if whenever $X$ is a Stein $G$-manifold and $\Omega$ is a $G$-saturated Runge domain in $X$, the closure of the image of the restriction map $\rho_\Omega^X:\O^G(X,Y)\to\O^G(\Omega,Y)$ is a union of path components (possibly empty).  Roughly speaking, this means that approximability of holomorphic $G$-maps $\Omega\to Y$ by holomorphic $G$-maps $X\to Y$ is deformation invariant.

When $G$ is the trivial group, the $G$-Runge property of $Y$ is one of the equivalent formulations of the Oka property.  It is a variant of the so-called homotopy Runge property, it implies the Oka property formulated as the convex approximation property, and is implied by the Oka property formulated as the basic Oka property with approximation.

We recall that to say that $\Omega$ is Runge means that $\Omega$ is Stein and the image of the restriction map $\O(X)\to\O(\Omega)$ is dense.  Equivalently, $\Omega$ can be exhausted by compact subsets that are $\O(X)$-convex (and not merely $\O(\Omega)$-convex).

\begin{proposition}  \label{p:Runge-G-domains}
Let $G$ be a reductive complex Lie group and let $K$ be a maximal compact subgroup of $G$.  For a $G$-saturated Stein domain $\Omega$ in a Stein $G$-manifold $X$, the following are equivalent.
\begin{enumerate}
\item[(i)]  $\Omega$ is Runge.
\item[(ii)]  The image of the restriction map $\O^G(X)\to\O^G(\Omega)$ is dense.
\item[(iii)]  $\Omega$ can be exhausted by $K$-invariant $\O(X)$-convex compact subsets of $X$.
\item[(iv)]  $\Omega$ is the preimage of a Runge domain in $X\git G$.
\end{enumerate}
\end{proposition}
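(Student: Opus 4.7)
The plan is to establish the cycle (i)$\Rightarrow$(ii)$\Rightarrow$(iv)$\Rightarrow$(iii)$\Rightarrow$(i). I will use throughout that a $G$-saturated Stein open $\Omega \subset X$ projects to an open Stein subset $U = \Omega\git G$ of $X\git G$ with $\Omega = \pi^{-1}(U)$, where $\pi : X \to X\git G$ is the quotient map, and that pull-back via $\pi$ gives canonical isomorphisms $\O^G(X) \cong \O(X\git G)$ and $\O^G(\Omega) \cong \O(U)$. I will also use that a holomorphic $K$-invariant function is automatically $G$-invariant (since $G$ is the complexification of $K$) and that $K$-averaging preserves holomorphy.

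For (i)$\Rightarrow$(ii), given $g \in \O^G(\Omega)$, a compact $L\subset\Omega$, and $\varepsilon>0$, I note that $KL$ is a compact subset of $\Omega$ since $\Omega$ is $K$-saturated. By (i), pick $f\in\O(X)$ with $|f-g|<\varepsilon$ on $KL$. The $K$-average $f^K(x)=\int_K f(k\inv x)\,dk$ lies in $\O^G(X)$, and $K$-invariance of $g$ gives $|f^K-g|<\varepsilon$ on $L$. For (ii)$\Leftrightarrow$(iv), the above identifications turn density of $\O^G(X)\to\O^G(\Omega)$ into density of $\O(X\git G)\to\O(U)$; combined with $U$ being Stein, this is precisely the Runge property of $U$ in $X\git G$.

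For (iv)$\Rightarrow$(iii), let $(C_n)$ be an exhaustion of $U$ by $\O(X\git G)$-convex compacta and let $(K_m)$ be a $K$-invariant compact exhaustion of $X$. Define
\[ L_{n,m} \;=\; \widehat{\pi^{-1}(C_n)\cap K_m}^{\,\O(X)}, \]
the $\O(X)$-convex hull. Each $L_{n,m}$ is compact (since $X$ is Stein), $K$-invariant (because $K$-invariance of the seed set is preserved by taking $\O(X)$-hulls, as $K$ acts by biholomorphisms), and $\O(X)$-convex by construction. The containment $L_{n,m}\subset\Omega$ follows from the elementary bound $\widehat E{}^{\O(X)}\subset \widehat E{}^{\O(X)^G} = \pi^{-1}\bigl(\widehat{\pi(E)}^{\O(X\git G)}\bigr)$: with $E = \pi^{-1}(C_n)\cap K_m$ one has $\pi(E)\subset C_n$, hence its $\O(X\git G)$-hull lies in $C_n \subset U$, so $L_{n,m}\subset\pi^{-1}(U)=\Omega$. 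Given any $x\in\Omega$, choose $n$ with $\pi(x)\in C_n$ and $m$ with $x\in K_m$; then $x\in\pi^{-1}(C_n)\cap K_m\subset L_{n,m}$. Re-indexing the countable family $(L_{n,m})$ into an increasing sequence yields (iii). Finally (iii)$\Rightarrow$(i) is immediate since a $K$-invariant $\O(X)$-convex compact exhaustion is in particular an $\O(X)$-convex compact exhaustion, which characterises Runge-ness.

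The main obstacle is (iv)$\Rightarrow$(iii). The preimages $\pi^{-1}(C_n)$ are $G$-stable and $\O(X)^G$-convex but are almost never compact, while arbitrary compact subsets thereof are generally not $\O(X)$-convex. The crucial point is the functorial containment of $\O(X)$-hulls in $\O(X)^G$-hulls, which lets the geometry on the quotient side confine the (non-invariant) $\O(X)$-hull to the correct preimage. Everything else is a fairly routine combination of averaging, the identification $\O^G = \pi^*\O(\cdot\git G)$, and the Steinness of $G$-saturated open sets.
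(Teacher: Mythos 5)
Your proof is correct and takes essentially the same route as the paper's: the same cycle (i)$\Rightarrow$(ii)$\Rightarrow$(iv)$\Rightarrow$(iii)$\Rightarrow$(i), with $K$-averaging for (i)$\Rightarrow$(ii), the identification $\O^G=\pi^*\O(\cdot\git G)$ for (ii)$\Leftrightarrow$(iv), and intersecting preimages of an $\O(X\git G)$-convex exhaustion of $\pi(\Omega)$ with $K$-invariant compacts for (iv)$\Rightarrow$(iii). The only cosmetic difference is that the paper intersects with a $K$-invariant $\O(X)$-convex exhaustion of $X$, so the intersections are already $\O(X)$-convex and no hulls are needed, whereas you take $\O(X)$-hulls and confine them to $\Omega$ via the containment in the $\O(X)^G$-hull; the resulting sets are the same.
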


The proposition shows that the domains $\Omega$ in the definition of the $G$-Runge property are precisely the preimages in $X$ of the Runge domains in the normal Stein space $X\git G$.  The proposition also shows that there is no new \lq\lq$G$-Runge property\rq\rq\ for $G$-saturated Stein domains in $X$.  There is just the ordinary Runge property.  Thus there is no property on the source side that should be called \lq\lq $G$-Runge\rq\rq, instead of using that term on the target side as we have done.

\begin{proof}
(i) $\Rightarrow$ (ii) by the Oka-Weil approximation theorem and an averaging argument using $K$.

(ii) $\Rightarrow$ (iv):  By (ii), the equivalence relations defined on $\Omega$ by $\O^G(\Omega)$ and $\O^G(X)$ are the same, so $\pi(\Omega)=\Omega\git G$, and $\pi(\Omega)$ is Stein.  Also, $\O(X\git G)=\O^G(X)$ is dense in $\O(\pi(\Omega))=\O^G(\Omega)$, so $\pi(\Omega)$ is Runge in $X\git G$.

(iv) $\Rightarrow$ (iii):  Exhaust the Runge domain $\pi(\Omega)$ by $\O(X\git G)$-convex subsets.  Take their preimages in $X$ and intersect with $K$-invariant $\O(X)$-convex compact subsets that exhaust $X$.

(iii) $\Rightarrow$ (i) is obvious.
\end{proof}

Next we show that for Runge domains, $G$-stability and $G$-saturation are in fact the same property.

\begin{proposition}
Let $G$ be a reductive complex Lie group, $X$ a Stein $G$-manifold, and $\Omega$ a $G$-stable Stein open subset of $X$ which is Runge in $X$.  Then $\Omega$ is $G$-saturated.
\end{proposition}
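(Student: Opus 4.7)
The plan is to show that for every $x \in \Omega$ the entire $\pi$-fibre $\pi^{-1}(\pi(x))$ (where $\pi \colon X \to X \git G$ is the quotient map) is contained in $\Omega$. By reductivity of $G$, this fibre contains a unique closed $G$-orbit $Gz$, which lies in the closure of every other orbit in the fibre. So I would first prove that $z \in \Omega$, and then propagate into the remaining orbits in the fibre using openness of $\Omega$ together with $G$-stability.

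For the first step, I would exhibit a holomorphic disc $\phi \colon \overline{D} \to X$ whose boundary $\phi(S^1)$ lies in $Gx \subset \Omega$ and whose centre $\phi(0)$ lies on $Gz$, and then apply the maximum principle together with the Runge hypothesis. Concretely, the Hilbert--Mumford criterion, applied in the Stein $G$-manifold setting, supplies a holomorphic one-parameter subgroup $\lambda \colon \C^* \to G$ such that $z_0 := \lim_{t \to 0} \lambda(t) \cdot x$ exists and lies in $Gz$. Setting $\phi(t) = \lambda(t) \cdot x$ for $t \neq 0$ and $\phi(0) = z_0$ produces a map on $\overline{D}$ which is holomorphic on $D \setminus \{0\}$ and continuous at $0$, hence holomorphic throughout $\overline{D}$ by Riemann's extension theorem. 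The boundary $\phi(S^1) = \lambda(S^1) \cdot x$ is a compact subset of $Gx \subset \Omega$; the Runge hypothesis forces its $\O(X)$-convex hull into $\Omega$; and the maximum principle, applied to $f \circ \phi$ for each $f \in \O(X)$, places $z_0 = \phi(0)$ in this hull. So $z_0 \in \Omega$, and $G$-stability gives $Gz = G \cdot z_0 \subset \Omega$.

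For the second step, given any $y \in \pi^{-1}(\pi(x))$, I would use that $Gz \subset \overline{Gy}$ to choose a sequence $g_n \in G$ with $g_n \cdot y \to z_0' \in Gz \subset \Omega$. Openness of $\Omega$ then gives $g_n \cdot y \in \Omega$ for $n$ large, and $G$-stability yields $y = g_n^{-1} \cdot (g_n \cdot y) \in \Omega$, completing the proof that $\Omega = \pi^{-1}(\pi(\Omega))$.

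The hard part will be the invocation of the Hilbert--Mumford criterion in the Stein setting, since the classical statement concerns algebraic actions on affine varieties. The cleanest route is via Heinzner's equivariant embedding theorem, which realises a suitable $G$-invariant neighbourhood of $\overline{Gx}$ in $X$ as a closed $G$-submanifold of a $G$-module $V$; the closed orbit in $\overline{Gx}^X$ then coincides with that in $\overline{Gx}^V$, so the classical criterion applied in $V$ supplies the required $\lambda$. Alternatively, the Kempf--Ness set and moment map techniques already cited in the proof of Theorem \ref{t:second-main} produce an optimal destabilising one-parameter subgroup directly.
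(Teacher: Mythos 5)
Your proof is correct, but it takes a genuinely different route from the paper's. You use the hull formulation of Runge-ness (that $\O(X)$-convex hulls of compact subsets of $\Omega$ stay in $\Omega$), a destabilising one-parameter subgroup (Birkes--Kempf Hilbert--Mumford) to produce a holomorphic disc with boundary in $Gx\subset\Omega$ and centre on the closed orbit, and then the orbit-closure/openness/$G$-stability argument to sweep out the rest of the fibre. The paper instead reduces by the slice theorem to the case where the source is the null cone $\N(W)$ of a slice representation $(W,H)$ and $\Omega$ is an $H$-stable Runge subset of it, and exploits Runge-ness through function theory: density of restrictions of $H$-finite (polynomial) functions together with finiteness of the multiplicities of irreducible $H$-modules forces the restriction map $\O_\gf(X)\to\O_\gf(\Omega)$ to be onto, so $\Omega$ is $H$-equivariantly biholomorphic to $\N(W)$, hence contains the unique fixed point $0$ and therefore equals $\N(W)$. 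Your approach is more geometric and uses the Runge hypothesis only in its weakest (hull) form, at the cost of importing Hilbert--Mumford; the paper's argument stays inside elementary equivariant function theory on the slice model and needs no one-parameter subgroups. The one point you should pin down is the invocation of Hilbert--Mumford in the Stein setting, and the simplest legitimisation is exactly the slice model the paper uses: a $G$-saturated Stein tube around the closed orbit is $G$-biholomorphic to a saturated domain in the affine $G$-variety $G\times^H W$, it contains the whole fibre through $x$, which is identified with $G\times^H\N(W)$, and there the algebraic criterion applies; the limit lies in $\overline{Gx}$, hence inside the tube, so the disc transports back to $X$. Your Heinzner-embedding alternative also works, but needs the remark that the embedding theorem requires finitely many slice types, which such a tube has; and for non-connected $G$ one applies the criterion to $G^0$ and notes that the unique closed $G^0$-orbit in the closure of $G^0x$ lies in the closed $G$-orbit $Gz$.
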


\begin{proof}
Let $F$ be a fibre of the quotient map $X\to X\git G$ which intersects $\Omega$.  We show that $F\subset\Omega$.  Let $Gx$ be the closed orbit in $F$.  Then $F\simeq G\times^H\N(W)$, where $H=G_x$ and $(W,H)$ is the slice representation at $x$.  There is an $H$-saturated neighbourhood $S$ of the origin in $W$ such that there is a $G$-saturated Stein neighbourhood of $Gx$, $G$-biholomorphic to $G\times^HS$.  Using this biholomorphism we can reduce to the case that $X=\N(W)$ and $\Omega$ is an $H$-stable Stein open subset of $X$.  Since $\O(X)^H=\C$ is dense in $\O(\Omega)^H$, $\O(\Omega)^H=\C$ also.

Now the spaces $\O_\gf(X)$ and $\O_\gf(\Omega)$ of $H$-finite functions are dense in $\O(X)$ and $\O(\Omega)$, respectively.  The $H$-finite functions on $X$ are just the polynomial functions on $X$.  Since $\Omega$ is Runge in $X$, the $H$-finite functions on $X$ are dense in the $H$-finite functions on $\Omega$.  But the multiplicity of any irreducible $H$-module in $\O_\gf(X)$ and $\O_\gf(\Omega)$ is finite.  Hence $\O_\gf(X)$ maps onto $\O_\gf(\Omega)$.  Therefore $X$ and $\Omega$ are $H$-equivariantly biholomorphic.  Since $X$ has a unique $H$-fixed point $0\in S$, so does $\Omega$, and we see that $0\in\Omega$.  Since every $H$-orbit in $X$ has $0$ in its closure, $H\Omega=\Omega=X$, establishing the proposition.
\end{proof}

Here are the basic properties of the $G$-Runge property.  Note the similarity to Propositions \ref{p:Oka-properties} and \ref{p:elliptic-properties}.

\begin{proposition}  \label{p:G-Runge-properties}
\begin{enumerate}
\item  If $G$ acts trivially on $Y$, then $Y$ is $G$-Runge if and only if $Y$ is Oka.
\item  If $Y$ is $G$-Runge, then $Y$ is $H$-Runge for every reductive closed subgroup $H$ of $G$.
\item  If $Y$ is $G$-Runge, then $Y$ is Oka.
\item  If $Y_k$ is $G_k$-Runge, $k=1,2$, then $Y_1\times Y_2$ is $G_1\times G_2$-Runge.  
\item  If $Y_1$ and $Y_2$ are $G$-Runge, then $Y_1\times Y_2$ is $G$-Runge with respect to the diagonal action.
\item  A holomorphic $G$-retract of a $G$-Runge manifold is $G$-Runge.
\item  If $Y$ is $G$-Runge, then $Y$ is $G$-Oka.
\item  Conversely, if $Y^H$ is an Oka submanifold of $Y$, then $Y$ has the $G$-Runge property with respect to Stein $G$-manifold sources with single orbit type and stabiliser $H$.
\end{enumerate}
\end{proposition}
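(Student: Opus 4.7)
The plan is to handle (1)--(7) by arguments that transcribe the classical Runge/Oka property theory to the equivariant setting, and to handle (8) via the Luna--Richardson reduction of Proposition \ref{prop:induced}. Proposition \ref{p:Runge-G-domains} is the key translation tool throughout: a $G$-saturated Runge domain $\Omega\subset X$ corresponds to a Runge domain in $X\git G$, and the Oka--Weil theorem together with averaging over a maximal compact subgroup $K$ shows that $\O^G$-approximation is equivalent to ordinary approximation of the induced maps on the quotient.

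For (1), if $G$ acts trivially on $Y$, then a $G$-equivariant map $X\to Y$ is the same thing as a map $X\git G\to Y$, and a $G$-saturated Runge $\Omega\subset X$ is the preimage of a Runge domain in $X\git G$; the $G$-Runge property then becomes the ordinary homotopy Runge property of $Y$, which is one of the standard equivalent formulations of the Oka property. For (2), given an $H$-Stein source $X_H$ with $H$-saturated Runge subdomain $\Omega_H$, induce up to $X=G\times^HX_H\supset\Omega=G\times^H\Omega_H$; this $\Omega$ is a $G$-saturated Runge domain in the Stein $G$-manifold $X$, and the natural homeomorphisms $\O^H(X_H,Y)\simeq\O^G(X,Y)$ and $\O^H(\Omega_H,Y)\simeq\O^G(\Omega,Y)$ transfer the $G$-Runge conclusion down to $H$-Runge. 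Item (3) is immediate from (2) applied to the trivial subgroup. Items (4) and (5) follow because products of saturated Runge domains are saturated Runge in the product, and any saturated Runge domain in $X_1\times X_2$ is exhausted by such products at the level of the quotient. Item (6) is a direct computation: postcomposing with a holomorphic $G$-retraction $Y\to Z$ gives a continuous map $\O^G(\cdot,Y)\to\O^G(\cdot,Z)$ that sends the approximating sequences and path components compatibly.

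For (7), I would use (2) to reduce to showing that if $Y$ is $H$-Runge, then $Y^H$ is Oka. Take any Stein manifold $Z$ equipped with the trivial $H$-action and any Runge $\Omega\subset Z$, which is automatically $H$-saturated; then $\O^H(Z,Y)=\O(Z,Y^H)$ and likewise for $\Omega$, so the $H$-Runge property of $Y$ becomes exactly the homotopy Runge property of $Y^H$, whence $Y^H$ is Oka. For (8), single orbit type with stabiliser $H$ lets me invoke Proposition \ref{prop:induced} to write $X=G\times^{N_G(H)}X^H_0$, with $N=N_G(H)/H$ acting freely on $X^H_0$. Since $H$ fixes $X^H_0$ pointwise, every $G$-equivariant holomorphic map $X\to Y$ restricts to an $N$-equivariant holomorphic map $X^H_0\to Y^H$, and conversely; via the free $N$-action, such maps correspond bijectively, topologically and holomorphically, to sections of the fibre bundle $E=X^H_0\times^N Y^H\to X^H_0/N=X\git G$, whose fibre $Y^H$ is Oka by hypothesis. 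A $G$-saturated Runge $\Omega\subset X$ corresponds to a Runge subdomain of $X\git G$, over which $E$ restricts to a fibre bundle with the same Oka fibre, and the homotopy Runge property for sections of fibre bundles with Oka fibres (see \cite[Theorem 5.4.4]{Forstneric2017}) yields the conclusion.

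The main obstacle will be the careful bookkeeping in (8): one needs to check that the bijection between $G$-equivariant holomorphic maps $X\to Y$ and holomorphic sections of $E$ is a homeomorphism for the topologies implicit in the definition of the $G$-Runge property, and that restriction along $\Omega\hookrightarrow X$ corresponds precisely to restriction of sections along the quotient Runge inclusion. The other parts are largely formal and parallel the templates already established in Propositions \ref{p:Oka-properties} and \ref{p:elliptic-properties}.
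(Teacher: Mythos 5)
Your treatment of the substantive items, (1)--(3) and (6)--(8), is essentially the paper's own proof: (1) by identifying invariant maps with maps from $X\git G$ and the $G$-Runge property with the homotopy Runge formulation of the Oka property; (2) by inducing $X\mapsto G\times^H X$, using the adjunction homeomorphism of mapping spaces and the correspondence of saturated Runge domains; (3) as a formal consequence of (1) and (2); (7) by testing on sources with trivial action, so that $\O^H(Z,Y)=\O(Z,Y^H)$ and the $H$-Runge property becomes the homotopy Runge property of $Y^H$; and (8) by using Proposition \ref{prop:induced} to pass to the free action of $N_G(H)/H$ on $X^H_0$, identifying equivariant maps with sections of the associated bundle $X^H_0\times^N Y^H$ over $X\git G$ (with $G$-saturated Runge domains corresponding to Runge domains downstairs by Proposition \ref{p:Runge-G-domains}), and invoking the Oka principle with approximation for sections of a fibre bundle with Oka fibre over a Stein base. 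The paper does the same, merely organising (8) as ``free case first, then reduce by adjunction''; your bookkeeping concern about the homeomorphism of map spaces versus section spaces is legitimate but routine.

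The one step that would not survive scrutiny is your justification of (4) and (5). The definition of $G_1\times G_2$-Runge quantifies over all Stein $G_1\times G_2$-manifolds, not only products $X_1\times X_2$, so an argument about product sources does not cover it; and (5) concerns a single source $X$ with the diagonal action, so products of sources are beside the point there. Moreover, the claim that a saturated Runge domain in $X_1\times X_2$ is exhausted by products ``at the level of the quotient'' is false even for trivial groups: the unit ball in $\C^2$ is Runge but is not exhausted by product subdomains, since any product containing points near $(r,0)$ and $(0,r)$ with $r$ close to $1$ must contain $(r,r)$, which lies outside the ball. The easy and correct route for (5) is the splitting $\O^G(X,Y_1\times Y_2)\cong\O^G(X,Y_1)\times\O^G(X,Y_2)$, under which restriction maps, closures of images and path components all decompose componentwise; for (4) one splits in the same way but is then left with maps $X\to Y_i$ that are equivariant for $G_1\times G_2$ with the other factor acting trivially on the target, and this requires a further (short but genuine) argument rather than a reduction to product sources. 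The paper dismisses (4)--(6) as easy without proof, so this does not affect the agreement on the main content, but as written your reduction for (4)--(5) is a gap.
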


\begin{proof}
(1)  If $G$ acts trivially on $Y$ and $X$ is a Stein $G$-manifold, then a holomorphic $G$-map $f:X\to Y$ is $G$-equivariant if and only if it is $G$-invariant, so $f$ factors uniquely through $X\git G$.  Thus the $G$-Runge property for holomorphic maps into $Y$ from $X$ and $\Omega$ is equivalent to the homotopy Runge property for holomorphic maps into $Y$ from $X\git G$ and $\Omega\git G$.  As already remarked, the homotopy Runge property is one of the equivalent formulations of the Oka property.

(2) If $X$ is a Stein $H$-manifold, then $\O^G(G\times^H X, Y)$ is naturally homeomorphic to $\O^H(X,\res_H^G Y)$.  The $G$-manifold $G\times^H X$ is Stein, and there is a natural bijection between $H$-saturated Runge domains in $X$ and $G$-saturated Runge domains in $G\times^H X$, with $\Omega\subset X$ corresponding to $G\times^H \Omega\subset G\times^H X$.

(3) follows from (1) and (2).

(4--6) are easy.

(7) holds by (2) and because a $G$-map from a Stein manifold $X$ with a trivial $G$-action into $Y$ is the same as a plain map from $X$ into $Y^G$.

(8)  First consider the case when $H$ is trivial, so the $G$-action on $X$ is free and $X$ is a principal $G$-bundle over a Stein geometric quotient $X/G$.  Then there is a natural bijection between $G$-maps $X\to Y$ and sections of a bundle over $X/G$ with fibre $Y$.  Also, as shown above, the $G$-saturated Runge domains in $X$ are precisely the preimages of the Runge domains in $X/G$.  The conclusion now follows from the plain Oka property for sections of this bundle.  

In general, by Proposition \ref{prop:induced}, the natural map $G\times^{N} X^H\to X$, where $N=N_G(H)$, is a $G$-biholomorphism, and $L=N/H$ acts freely on $X^H$.  Thus $X^H$ is a principal $L$-bundle.  By adjunction,
\[ \O^G(X,Y) \simeq \O^N(X^H, \res_N^G Y) \simeq \O^L(X^H, Y^H), \]
so we are reduced to the case of a free action on the source.
\end{proof}

Next we present two classes of examples.

\begin{proposition}  \label{p:G-Runge-examples}
\begin{enumerate}
\item  A $G$-module is $G$-Runge.
\item  $G$ itself is $G$-Runge.
\end{enumerate}
\end{proposition}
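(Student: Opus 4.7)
The plan is to establish (1) by showing the restriction map has dense image via Oka--Weil together with $K$-averaging, and to establish (2) by trivialising $\O^G(X,G)$ by a reference equivariant map and reducing to the ordinary Runge property of the Oka manifold $G$.

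For (1), given a Stein $G$-manifold $X$, a $G$-saturated Runge domain $\Omega\subset X$, and $f\in\O^G(\Omega,V)$, I will show that $\rho_\Omega^X$ has dense image. Fix a $K$-invariant hermitian norm $\|\cdot\|$ on $V$, a number $\varepsilon>0$, and a $K$-invariant $\O(X)$-convex compact set $L\subset\Omega$ (such sets exhaust $\Omega$ by Proposition \ref{p:Runge-G-domains}(iii)). Applying the Oka--Weil theorem coordinatewise yields $h\in\O(X,V)$ with $\sup_L\|h-f\|<\varepsilon$. Averaging over a normalised Haar measure on $K$,
\[
\bar h(x):=\int_K k\inv h(kx)\,dk,
\]
produces a holomorphic map (by linearity of the $K$-action) that is $K$-equivariant, hence $G$-equivariant since $G=K_\C$. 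The $K$-invariance of $L$ and of the norm give $\sup_L\|\bar h-f\|\le\varepsilon$, so $\rho_\Omega^X$ has dense image and its closure is all of $\O^G(\Omega,V)$, trivially a union of path components.

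For (2), let $G$ act on itself by left multiplication. Equivariance of $f:X\to G$ forces $gf(x)=f(x)$ for $g\in G_x$, so $G_x=\{e\}$; hence $\O^G(X,G)$ is non-empty only when the $G$-action on $X$ is free. If $\O^G(X,G)=\emptyset$, the image of $\rho_\Omega^X$ is empty, trivially a union of path components. Otherwise I will fix $\tilde f_0\in\O^G(X,G)$; then $X\to X/G$ is a principal $G$-bundle, $X/G$ is Stein, and $\Omega/G$ is Runge in $X/G$ by Proposition \ref{p:Runge-G-domains}(iv). The assignment $f\mapsto\alpha_f$, where $\alpha_f([x])=\tilde f_0(x)\inv f(x)$, is a well-defined homeomorphism $\O^G(X,G)\to\O(X/G,G)$ with inverse $\alpha\mapsto(x\mapsto\tilde f_0(x)\alpha([x]))$. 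The analogous identification over $\Omega$ intertwines $\rho_\Omega^X$ with the ordinary restriction $\O(X/G,G)\to\O(\Omega/G,G)$. Since $G$ is Oka as a complex Lie group, Proposition \ref{p:G-Runge-properties}(1) applied to the trivial action gives that this ordinary restriction has closure equal to a union of path components, and the property transfers back through the homeomorphisms.

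The main obstacle I anticipate is the bookkeeping in (2): verifying Steinness of $X/G$ under the free-action hypothesis and the compatibility of the identification with both restriction maps. The averaging argument in (1) is routine once the existence of $K$-invariant exhausting $\O(X)$-convex compacts is granted.
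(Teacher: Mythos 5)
Your proposal is correct and follows essentially the same route as the paper: part (1) is the same Oka--Weil plus $K$-averaging argument, and part (2) is the same reduction, namely that a global equivariant map to $G$ forces a free action and trivialises $X\to X/G$, so that equivariant maps become plain maps from the Stein quotient and the ordinary Oka (homotopy Runge) property of $G$ finishes the proof. The extra bookkeeping you flag (Steinness of $X/G$, Runge-ness of $\Omega/G$, intertwining of the restriction maps) is exactly what the paper's terser proof also relies on, and your trivialisation $x\mapsto(\tilde f_0(x),\tilde f_0(x)\inv x)$ settles it.
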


We do not know a simple proof that a $G$-homogeneous space is $G$-Runge.  In the next section, we show that $G$-ellipticity implies $G$-Runge; it follows that $G$-homogeneous spaces are $G$-Runge.

\begin{proof}
(1)  If $Y$ is a $G$-module and $\Omega\subset X$ are as in the definition of the $G$-Runge property, then by the Oka-Weil approximation theorem and an averaging argument using $K$, the closure of $\rho_\Omega^X(\O^G(X,Y))$ is all of $\O^G(\Omega,Y)$.

(2)  Let $X$ be a Stein $G$-manifold.  Let $f:X\to G$ be a holomorphic $G$-map.  The existence of $f$ implies that the $G$-action on $X$ is free, so $X$ is a principal $G$-bundle over the Stein base $X/G$.  The existence of $f$ also implies that $X\to X/G$ is holomorphically trivial, so a $G$-map from $X$ is nothing but a plain map from $X/G$.  The Oka property of $G$ now implies that $G$ is $G$-Runge.
\end{proof}

\section{$G$-elliptic implies $G$-Runge}  \label{s:ellipticity-implies-Runge}

\noindent
The proof of the following theorem shows the essence of Gromov's linearisation method as sketched in \cite[Section 1.4]{Gromov1989} (see also \cite[Remark 6.6.4]{Forstneric2017}), here adapted to the presence of a group action.

\begin{theorem}  \label{t:G-ell-implies-G-Runge}
Let $G$ be a reductive complex Lie group.  A $G$-elliptic manifold is $G$-Runge.
\end{theorem}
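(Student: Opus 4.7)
The plan is to adapt Gromov's linearisation method, sketched in \cite[Section 1.4]{Gromov1989}, to the equivariant setting.  The $G$-Runge property amounts to the following statement: for any Stein $G$-manifold $X$, any $G$-saturated Runge domain $\Omega\subset X$, and any path $(f_t)_{t\in[0,1]}$ in $\O^G(\Omega,Y)$ with $f_1$ lying in $\overline{\rho_\Omega^X(\O^G(X,Y))}$, also $f_0$ lies in this closure.  I would reduce this to a \emph{one-step approximation lemma}: given $h\in\O^G(X,Y)$, a $K$-invariant $\O(X)$-convex compact subset $B\subset\Omega$ (where $K$ is a maximal compact subgroup of $G$), and $\epsilon>0$, every $f\in\O^G(\Omega,Y)$ sufficiently close to $h|_\Omega$ uniformly on a neighbourhood of $B$ admits some $F\in\O^G(X,Y)$ with $\sup_B d(F,f)<\epsilon$.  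Subdividing $[0,1]$ by $0=t_0<t_1<\cdots<t_N=1$ finely enough, relative to a nested chain of compact sets and the uniform distance on them, downward induction from $t_N=1$ then yields the required approximant for $f_0$.

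To prove the one-step lemma, I would linearise using the dominating $G$-equivariant spray $s:E\to Y$.  The pullback $h^*E\to X$ is a holomorphic $G$-vector bundle, and $\tilde s:h^*E\to Y$, $(x,e)\mapsto s(e)$, is $G$-equivariant with $\tilde s(x,0_{h(x)})=h(x)$ and vertical differential at the zero section surjecting $(h^*E)_x$ onto $T_{h(x)}Y$ for every $x\in X$ by domination.  For $f$ close to $h|_\Omega$, I would construct a $G$-equivariant holomorphic section $\xi$ of $h^*E$ on a $G$-saturated neighbourhood $\Omega'$ of $B$ in $\Omega$ satisfying $\tilde s\circ\xi=f$ there.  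The equivariant Oka--Weil approximation theorem for sections of the coherent $G$-sheaf $\O(h^*E)$ on the Stein $G$-manifold $X$ (standard Oka--Weil applied on the $\O(X)$-convex set $B$, followed by averaging over $K$ to restore equivariance) then yields a global section $\Xi\in\O^G(X,h^*E)$ uniformly close to $\xi$ on $B$, and $F:=\tilde s\circ\Xi\in\O^G(X,Y)$ approximates $f$ uniformly on $B$ as required.

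The main obstacle I anticipate is the global construction of the $G$-equivariant section $\xi$.  Locally, the holomorphic implicit function theorem provides a solution because $\tilde s$ restricted to each fibre of $h^*E$ is a submersion at the zero section.  To globalise $G$-equivariantly, I would split $h^*E=V\oplus W$ as $G$-vector bundles with $\operatorname{rk}V=\dim Y$ and the restriction of $\tilde s$ to each $V_x$ \'etale at $0_{h(x)}$; such a splitting exists by Cartan's theorem B applied to the coherent $G$-sheaf of splittings on the Stein $G$-manifold $X$, with a final $K$-average to secure equivariance.  On $V$ the implicit function theorem then provides a unique local section solving $\tilde s\circ\xi=f$, which inherits $G$-equivariance from that of $f$ and $h$ by uniqueness, and which I extend by zero in the $W$-direction to the desired section of $h^*E$.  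A continuity estimate ensures that $\xi$ stays within the domain of the local inverse, provided $f$ is close enough to $h|_\Omega$ uniformly on $\Omega'$.
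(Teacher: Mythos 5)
Your overall strategy is the one the paper uses: linearise via the dominating $G$-spray, lift nearby maps to sections of a vector bundle, apply equivariant Oka--Weil (Lemma \ref{l:equivariant-Stein-theory}), and push back down with the spray, over a finite subdivision of the path and a nested chain of compacts; and your one-step lemma is correct as an isolated statement. The genuine gap is in how you chain the steps. The threshold ``sufficiently close to $h|_\Omega$'' in your one-step lemma depends on the global map $h$: it is governed by the splitting $h^*E=V\oplus W$ chosen by Theorem B along the graph of $h$ and by the size of the tube on which the fibrewise implicit function theorem inverts $\tilde s$ along the zero section of $V$ over a neighbourhood of $B$. In your downward induction, $h$ at stage $j$ is the global approximant produced at stage $j+1$, hence is only known after the partition $0=t_0<\cdots<t_N=1$ (and so the bound on $d(f_{t_j},f_{t_{j+1}})$ over the chain of compacts) has been fixed. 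Nothing in your construction bounds this threshold from below in terms of data available in advance: as the approximants vary, the abstract Theorem-B splittings and their tube sizes are uncontrolled, so ``finely enough'' cannot be specified beforehand, and an adaptive refinement would both risk non-termination and consume the nested chain of compacts. The same quantifier problem blocks the alternative of showing that $\{t: f_t\in\overline{\rho_\Omega^X\O^G(X,Y)}\}$ is open and closed.

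The paper removes exactly this difficulty by attaching the linearising data to the path maps rather than to the global approximants: the subbundle $F_j$ with $D\sigma\colon F_j\to\Ker Dp$ an isomorphism is constructed over a $K$-invariant Stein \emph{neighbourhood} of the graph of $f_{t_j}$ in $X\times Y$, not just along a single graph pulled back to $X$. Since $F_j$ lives on an open subset of $X\times Y$, the same bundle and tube serve for every nearby graph; the partition is therefore chosen in advance, and the closeness required of the global approximant $g_j$ to $f_{t_j}$ is expressible before $g_j$ is produced, so the next path map still lifts through $F_j$ along $g_j$'s graph. A further step you omit is then needed: $F_j$ is only defined near the graph over $\Omega$, so before applying Oka--Weil to sections one must replace it, via Oka--Weil for splitting morphisms (Lemma \ref{l:equivariant-Stein-theory}(2)), by a splitting defined along the whole global graph $g_j(X)$ and close to $F_j$ over the relevant compact. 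A minor additional point: uniform closeness near the $K$-invariant compact $B$ gives no control on its noncompact $G$-saturation, so the local lift $\xi$ should only be asserted to be $K$-equivariant on a $K$-invariant neighbourhood, with $G$-equivariance recovered for the global maps at the end, as in the paper.
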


We are unable to replace the $G$-ellipticity assumption by the $G$-Oka property.  If we could, it would show that the $G$-Oka and $G$-Runge properties are equivalent, which we expect to be the case.  To prove the theorem we need a lemma with some basic equivariant Stein theory.  For lack of a reference we sketch a proof.

\begin{lemma}  \label{l:equivariant-Stein-theory}
Let $K$ be a compact real Lie group.  Let $X$ be a Stein $K$-manifold.  In the category of holomorphic $K$-vector bundles and $K$-equivariant morphisms over $X$, the following statements hold.
\begin{enumerate}
\item  Every short exact sequence splits.
\item  The splitting morphisms of a short exact sequence may be identified with the sections of a vector bundle.
\item  Every vector bundle is a direct summand in a trivial vector bundle.
\item  The Oka-Weil approximation theorem holds for holomorphic sections of a vector bundle.
\end{enumerate}
\end{lemma}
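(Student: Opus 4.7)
The plan is to reduce each part to the corresponding non-equivariant statement by averaging over $K$ with respect to normalised Haar measure. In every case, the desired equivariant object---splitting, finite generating set, approximating section---is carved out of a larger non-equivariant space by constraints that are linear or affine, $K$-stable and convex, so that averaging any non-equivariant solution over $K$ produces an equivariant solution satisfying the same constraints.

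For (1), given a short exact sequence $0 \to E' \to E \overset{\pi}{\to} E'' \to 0$, apply the classical Cartan Theorem B to the induced short exact sequence
\[
0 \to \mathscr{H}om(E'', E') \to \mathscr{H}om(E'', E) \to \mathscr{H}om(E'', E'') \to 0
\]
to lift $\id_{E''}$ to a (not necessarily equivariant) holomorphic splitting $s_0 \colon E'' \to E$. The $K$-average $s(v) = \int_K k\cdot s_0(k^{-1}v)\,dk$ is then holomorphic, $K$-equivariant, and still a splitting, because the affine subspace of fibrewise splittings at each point is $K$-stable and convex. Part (2) follows immediately: fix the equivariant splitting $s$ just produced; any other $K$-equivariant splitting has the form $s + \phi$ for a unique $K$-equivariant morphism $\phi \colon E'' \to E'$, so the equivariant splittings are in natural bijection with the $K$-invariant global sections of the $K$-vector bundle $\mathscr{H}om(E'', E')$.

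For (3), the classical theorem on finite generation of locally free sheaves on a Stein manifold supplies finitely many (non-equivariant) global holomorphic sections $s_1,\ldots,s_N$ generating $E$ at every point. By Peter-Weyl, the $K$-finite vectors are dense in the Fr\'echet $K$-module $\Gamma(X,E)$, so each $s_i$ can be approximated arbitrarily closely by a $K$-finite section. Combining such approximations with an exhaustion/patching argument based on the vanishing of $H^1$ for coherent sheaves on Stein spaces then produces finitely many $K$-finite global sections that still generate $E$ on all of $X$. Their $K$-orbits span a finite-dimensional $K$-invariant subspace $V \subset \Gamma(X,E)$, and the evaluation map $e \colon X \times V \to E$ is a $K$-equivariant surjection from a trivial $K$-vector bundle. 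Applying (1) to $0 \to \Ker e \to X \times V \to E \to 0$ exhibits $E$ as a $K$-equivariant direct summand of $X \times V$. I expect the main technical obstacle to be precisely this upgrade from stalk-wise density of $K$-finite sections to a finite set of global $K$-finite generators; simple averaging alone does not suffice, because the $K$-orbit of an arbitrary holomorphic section need not lie in a finite-dimensional subspace.

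Finally, (4) falls out of (3). Given a $K$-equivariant holomorphic section defined near a $K$-invariant $\O(X)$-convex compact set $A$, view it via the embedding $E \hookrightarrow X \times V$ from (3) as a $V$-valued holomorphic map near $A$, apply the classical Oka-Weil theorem to produce a global $V$-valued holomorphic approximation, average over $K$ (which preserves approximation on $A$ because $A$ is $K$-invariant and the original section is $K$-equivariant), and compose with the $K$-equivariant retraction $X \times V \to E$ supplied by (3) to obtain the desired global $K$-equivariant holomorphic section of $E$ approximating the original on $A$.
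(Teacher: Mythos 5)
Your parts (1), (2) and (4) are correct, and for (1) you take a genuinely different (and simpler) route than the paper: the paper obtains the splitting from an equivariant Theorem B, namely the vanishing of $H^1(X,\mathscr H\!\textit{om}^K(C,A))$, which it gets from the coherence of the invariant direct image $r_*^K\mathscr H\!\textit{om}(C,A)$ on the Stein quotient $X\git K$ (Hausen--Heinzner, Roberts) together with ordinary Theorem B downstairs; you instead split non-equivariantly by classical Theorem B and average over $K$, which works because the fibrewise splitting condition is affine and $K$-stable and Haar averaging preserves holomorphy. Parts (2) and (4) are essentially the paper's arguments (splittings form an affine space over equivariant sections of $\mathscr H\!\textit{om}(E'',E')$; embed into a trivial $K$-bundle, apply ordinary Oka--Weil, average, and project back).

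The genuine gap is in (3), exactly at the point you flag yourself. Density of the $K$-finite vectors in $\Gamma(X,E)$ does give that the $K$-finite sections generate every fibre (the evaluation map to the finite-dimensional fibre is continuous, linear and surjective, and a dense subspace cannot lie in a proper closed subspace), hence finitely many $K$-finite sections generate $E$ on a neighbourhood of any given compact set. But on a non-compact Stein manifold, generation everywhere is not preserved under approximation on compacts, so approximating the Forster--Ramspott generators $s_1,\dots,s_N$ by $K$-finite sections proves nothing about behaviour at infinity, and the proposed ``exhaustion/patching argument based on vanishing of $H^1$'' is not an argument: adding $K$-finite sections along an exhaustion need not terminate after finitely many steps, and a Forster--Ramspott style genericity/dimension-reduction argument would have to be rerun with the generic choices constrained to lie in the (dense but non-closed, countable-dimensional) subspace of $K$-finite sections, which you do not justify. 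What you are asking for is precisely the equivariant Theorem A, and this is how the paper fills the hole: by the coherence theorem for $K$-sheaves, the isotypic components of the direct image of the sheaf of sections are coherent on the Stein quotient $X\git K$, and Theorems A and B on $X\git K$ yield finitely many sections spanning a finite-dimensional $K$-module $V$ that generate $E$ at every point of $X$; from there your conclusion (apply (1) to the equivariant evaluation epimorphism $X\times V\to E$) is fine. So either quote and use equivariant Theorem A (Heinzner, Hausen--Heinzner, Roberts) or supply a complete replacement; as written, (3), and with it your proof of (4), is not established.
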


\begin{proof}
(1)  To split a short exact sequence
\[ 0 \to A \to B \to C \to 0 \]
of vector bundles over $X$ in the $K$-category, it suffices that the last term vanish in the exact sequence
\[ \Hom^K(C,B) \to \Hom^K(C,C)\to H^1(X,\mathscr H\!\textit{om}^K(C,A)). \]
The sheaf $r_*^K\mathscr H\!\textit{om}(C,A)$ is coherent on the Stein space $X\git K$ (\cite[p.~27]{HH1999}; see also \cite[Theorem 3.1]{Roberts1986}), where $r$ denotes the quotient map $X\to X\git K$, so
\[ H^1(X,\mathscr H\!\textit{om}^K(C,A))=H^1(X\git K, r_*^K\mathscr H\!\textit{om}(C,A))=0. \]

(2)  Two splitting morphisms $C\to B$ differ by a morphism from $C$ to the kernel $A$ of $B\to C$.  Such morphisms are the sections of a vector bundle.

(3)  We need the bundle to be generated by finitely many $K$-sections.  This follows from the equivariant version of Theorem A, which comes from the equivariant version of Theorem B, which comes from the coherence theorem cited in the proof of (1) and the ordinary Theorem B on the quotient $X\git K$.  To conclude, apply (1). 

(4)  We invoke (3), apply ordinary Oka-Weil, and average over $K$.
\end{proof}

\begin{proof}[Proof of Theorem \ref{t:G-ell-implies-G-Runge}]
Let $Y$ be a $G$-elliptic manifold.  Let $\pi:E\to Y$ be a $G$-vector bundle with a dominating $G$-spray $s:E\to Y$.  Let $X$ be a Stein $G$-manifold and $\Omega$ be a $G$-saturated Runge domain in $X$.  Let $Z=X\times Y$ and let $p:Z\to X$ and $q:Z\to Y$ be the projections.  Maps $X\to Y$ correspond to sections of $p$.  We pull $E$ back to a $G$-vector bundle $q^*\pi:q^*E\to Z$, and obtain a $G$-map $\sigma:q^*E\to Z$, $((x,y),v)\mapsto (x,s(v))$, where $v\in E_y$ ($\sigma$ is called a fibre-dominating $G$-spray; the word \textit{fibre} refers to the fibres of $p$).  The map $\sigma$ takes the fibre of $q^*E$ over $(x,y)\in Z$ nondegenerately into the fibre $p^{-1}(x)$ in $Z$.

The domination property of $s$ means that $Ds:\Ker D\pi\vert Y\to TY$ is an epimorphism of bundles over $Y$, that is, fibrewise surjective.  Here, $\Ker D\pi\vert Y$ is the vertical subbundle of the tangent bundle of $E$, restricted to its zero section, which we identify with $Y$, so $\Ker D\pi\vert Y$ is naturally identified with $E$ itself.  Therefore, $D\sigma:\Ker D q^*\pi\vert Z \to \Ker Dp$ is an epimorphism of bundles over $Z$.

Let $f\in\O^G(S,Y)$, where $S$ is a $G$-saturated Stein domain in $X$, and let $h:x\mapsto (x,f(x))$ be the corresponding section of $p$ over~$S$.  By Siu's Stein neighbourhood theorem, the Stein $G$-submanifold $h(S)$ of $S\times Y$ has a Stein neighbourhood $W'$ in $S\times Y$.  Let $K$ be a maximal compact subgroup of $G$.  Then $W=\bigcap\limits_{k\in K} kW'$ is a $K$-invariant Stein neighbourhood of $h(S)$ in $S\times Y$ (note that the complement of $W$ is closed since $K$ is compact).

By Lemma \ref{l:equivariant-Stein-theory}, over $W$, viewed as a Stein open subset of $Z$, there is a $K$-invariant holomorphic subbundle $F$ of $\Ker D q^*\pi\vert Z$ such that $\Ker D q^*\pi\vert Z=\Ker D\sigma\oplus F$ and $D\sigma:F\to\Ker Dp$ is an isomorphism.  Since $\Ker D q^*\pi\vert Z$ is naturally identified with $q^*E$, we may view $F$ as a subbundle of $q^*E$.  By the inverse function theorem, $\sigma$ maps a neighbourhood of the zero section in $F\vert h(S)$ biholomorphically onto a neighbourhood of $h(S)$ in $Z$.  These neighbourhoods may be taken to be $K$-invariant.

Now let $[0,1] \to\O^G(\Omega,Y)$, $t\mapsto f_t$, be a continuous path such that $f_0\in\overline{\rho_\Omega^X\O^G(X,Y)}$.  Let $h_t:x\mapsto (x,f_t(x))$ be the corresponding sections of $p$.  Let $M\subset\Omega$ be compact.  We need to show that $f_1$ can be uniformly approximated on $M$ by holomorphic $G$-maps $X\to Y$.

A Runge domain in a Stein manifold can be exhausted by relatively compact Runge subdomains.  Here these can be taken to be $K$-invariant, using a suitable $K$-invariant strictly plurisubharmonic exhaustion function.

Choose a $K$-invariant Runge domain $U_0$ with $M\subset U_0\Subset\Omega$.  There is a partition $0=t_0<t_1<\cdots<t_k=1$ of $[0,1]$ such that for each $j=0,\ldots,k-1$ and each $t\in[t_j, t_{j+1}]$, $h_t(U_0)$ lies in a neighbourhood of $h_{t_j}(U_0)$ that is a biholomorphic image by $\sigma$ of a neighbourhood of the zero section in $F_j\vert h_{t_j}(U_0)$.  Here, $F_j$ is the bundle $F$ obtained as above with $S=\Omega$ and $h=h_{t_j}$.  Hence $h_t\vert U_0$ lifts to a uniquely determined holomorphic section $\xi$ of $F_j\vert h_{t_j}(U_0)$ with $\sigma\circ\xi\circ h_{t_j}=h_t$ on $U_0$.  Clearly, $\xi$ is $K$-equivariant.

Choose $K$-invariant Runge domains $U_1,\ldots,U_k$ and $V_0,\ldots,V_{k-1}$ such that
\[ M\subset U_k\Subset V_{k-1}\Subset U_{k-1}\Subset\cdots\Subset U_1\Subset V_0\Subset U_0\Subset \Omega. \]
By assumption, $h_0$ can be uniformly approximated on $U_0$ by a holomorphic $G$-section $g_0$ of $p$ defined on all of $X$.  If the approximation is close enough, then $F_0$ is defined on $g_0(U_0)$ and $h_{t_1}\vert U_0$ lifts to a $K$-section $\xi$ of $F_0\vert g_0(U_0)$ with $\sigma\circ\xi\circ g_0=h_{t_1}$ on $U_0$.  By Oka-Weil approximation theorem for splitting morphisms (Lemma \ref{l:equivariant-Stein-theory}), there is a bundle $F$ obtained as above with $S=X$ and $h=g_0$ that approximates $F_0$ closely enough on $g_0(V_0)$ that $h_{t_1}\vert V_0$ lifts to a section $\xi$ of $F\vert g_0(V_0)$ with $\sigma\circ\xi\circ g_0=h_{t_1}$ on $V_0$.  By Oka-Weil for sections of $F\vert g_0(X)$ (Lemma \ref{l:equivariant-Stein-theory}), $\xi$ can be uniformly approximated on $g_0(U_1)$ by global $G$-sections of $F\vert g_0(X)$.  Thus $h_{t_1}$ can be uniformly approximated on $U_1$ by global $G$-sections of $p$.

Continuing in this way, we see that $h_1$ can be uniformly approximated on $U_k$ by global $G$-sections of $p$.  Hence, $f_1$ can be uniformly approximated on $M$ by holomorphic $G$-maps $X\to Y$.
\end{proof}

We can now conclude that homogeneous spaces are equivariantly Runge.

\begin{corollary}
A complex manifold with a transitive holomorphic action of a reductive complex Lie group $G$ is $G$-Runge.
\end{corollary}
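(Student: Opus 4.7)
The plan is to combine two results already established in the paper. Proposition \ref{p:elliptic-examples}(2) asserts that any $G$-homogeneous space $Y$ is $G$-elliptic: explicitly, the trivial bundle $Y\times\mathfrak g\to Y$ with $G$ acting on $\mathfrak g$ via the adjoint representation, together with the dominating $G$-spray $(y,v)\mapsto\exp(v)\cdot y$, witnesses the $G$-ellipticity. Theorem \ref{t:G-ell-implies-G-Runge}, which is the main result of this section, then upgrades $G$-ellipticity to the $G$-Runge property for any reductive complex Lie group $G$.

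So the proof is a single line: apply Proposition \ref{p:elliptic-examples}(2) to get $G$-ellipticity of the homogeneous space, then apply Theorem \ref{t:G-ell-implies-G-Runge} to conclude $G$-Runge. There is no obstacle here; the entire point of the section was precisely to obtain this corollary, since the authors noted earlier (just after Proposition \ref{p:G-Runge-examples}) that they did not know a direct proof that a $G$-homogeneous space is $G$-Runge.
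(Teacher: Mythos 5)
Your proposal is correct and is exactly the paper's intended argument: the corollary follows immediately by combining Proposition \ref{p:elliptic-examples}(2) (a $G$-homogeneous space is $G$-elliptic) with Theorem \ref{t:G-ell-implies-G-Runge} ($G$-elliptic implies $G$-Runge), which is why the paper states it without further proof.
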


\end{document}